\theoremstyle{plain}
\newtheorem{theorem}{Theorem}[section]
\newtheorem{corollary}{Corollary}[section]
\newtheorem{proposition}{Proposition}[section]
\newtheorem{condition}{Condition}
\newtheorem{lemma}{Lemma}[section]
\theoremstyle{definition}
\newtheorem{definition}{Definition}[section]
\newtheorem*{ack}{Acknowledgements}
\newtheorem*{Ass}{Standing Assumptions}
\newcommand{\vertiii}[1]{{\left\vert\kern-0.25ex\left\vert\kern-0.25ex\left\vert #1 
    \right\vert\kern-0.25ex\right\vert\kern-0.25ex\right\vert}}
\begin{document}

\title[The cohomological equation and cyclic cocycles for renormalizable minimal Cantor systems]
      {The cohomological equation and cyclic cocycles for renormalizable minimal Cantor systems}
      \author{Rodrigo Trevi\~no}
      \address{Department of Mathematics, The University of Maryland, College Park, USA}
      \email{rodrigo@trevino.cat}
      \date{\today}
      \begin{abstract}
        For typical properly ordered and minimal Bratteli diagrams $(B,\leq_r)$, it is shown that there are finitely many invariant distributions $\mathcal{D}_i$ which are the only obstructions to solving the cohomological equation $f = u-u\circ \phi$ for the corresponding adic transformation $\phi:X_B\rightarrow X_B$ and for $\alpha$-H\"older $f$ with $\alpha$ large enough. These invariant distributions are then used to define cyclic cocycles, a.k.a. traces $\tau:K_0(\mathcal{A}_\phi)\rightarrow \mathbb{R}$ for the crossed product algebra $\mathcal{A}_\phi$.
      \end{abstract}
      \maketitle

      {\centering \footnotesize  Dedicated to Giovanni Forni on the occassion of his 60th birthday. \par}

      \section{Introduction and statement of results}

	Consider a homeomorphism of a compact metric space $\phi:X\rightarrow X$. One way to measure the complexity of such a system is to look for the number of dynamical invariants of the system. At the most basic level, these can correspond to equivalence classes of functions which are invariant under the dynamics. However the classes may be defined, they should satisfy $[f] = [f\circ\phi ]$, or equivalently $[f-f\circ\phi]$ should be a trivial class. The problem then becomes characterizing obstructions to solutions $u$ of the cohomological equation $f = u-u\circ \phi$ for a given $f$. A function of the form $u-u\circ\phi$ is called a \textbf{coboundary}. A dual point of view is to look for functionals $\mathcal{D}$ which are invariant under the dynamics: $\mathcal{D} = \phi_*\mathcal{D}$. Such functionals would then be defined on classes of functions defined up to coboundaries and these types of functionals present obstructions for a function to be a coboundary. A natural question is to then ask how the number of invariants may depend on the regularity of the class of functions considered, which would also restrict the types of invariant functionals or distributions that would be considered as obstructions.

	These type of problems have a long history. The classical result of Gottschalk and Hedlund can be considered the first result in this direction for minimal homeomorphisms and continuous functions. For systems with hyperbolic behavior, the study of this equation generally goes under the name of Livsic theory, dating back to the work of Livsic \cite{livsic}. The work of Giovanni Forni \cite{forni:1997, forni:deviation} opened the door to these types of questions for parabolic systems -- systems which are minimal and exhibit polynomial rates of divergence for nearby orbits. Forni has shown that, unlike the case of systems with hyperbolic hyperbolic, parabolic systems can be identified by having more than one but finitely many invariant distributions for functions of sufficient regularity, and that the regularity of functions and the associated distributions play a subtle and important role. He pioneered the use of renormalization methods to uncover the existence of invariant distributions. Another important series of work in the parabolic setting is that of Marmi-Moussa-Yoccoz \cite{MMY:2005, MY:Holder} in solving the cohomological equation for interval exchange transformations, the work of Giulietti-Liverani \cite{GL:parabolic} using anisotropic Banach spaces for parabolic systems, and the work of Faure-Gou\"ezel-Lanneau \cite{FGL:anosov} using similar transfer operator methods for pseudo-Anosov maps\footnote{The literature on Livsic theory is vast and I will therefore make no attempt to highlight results for systems with hyperbolic behavior.}.

The focus on this paper is on minimal Cantor systems $\phi:X\rightarrow X$ which are renormalizable and thus in the parabolic realm. In this setting, due to the seminal work of Giordano, Putnam and Skau \cite{GPS}, dynamical cohomology for functions of the highest regularity contains a lot of information. More precisely, the cohomology $G_\mathbb{Z}(\phi):=C(X,\mathbb{Z})/(\mathrm{Id}-\phi^*)$, considered as an ordered abelian group with units, is an invariant of strong orbit equivalence, while the group $G_\mathbb{Z}(\phi)/\mathrm{Inf}_\mathbb{Z}(\phi)$ (the group $\mathrm{Inf}_\mathbb{Z}(\phi)$ is defined below), also considered as an ordered abelian group with unit, is an invariant of orbit equivalence for minimal $\mathbb{Z}$-actions on Cantor sets. The group $G_\mathbb{Z}(\phi)$ is also isomorphic to the first \v Cech cohomology of the mapping torus of $\phi$.

One may wonder what information is encoded in the dynamical cohomology for continuous functions. Define for $\mathbb{G}\in\{\mathbb{Z},\mathbb{R}\}$:
      $$G_\mathbb{G}(\phi):= C(X,\mathbb{G})/(\mathrm{Id}-\phi^*),$$
      the \textbf{group of $\mathbb{G}$-coinvariants of $\phi$} and 
      $$\mathrm{Inf}_\mathbb{G}(\phi) = \left\{[f]\in G_\mathbb{G}(\phi) : \mu(f) = 0\mbox{ for all $\phi$-invariant measures }\mu\right\},$$
      the \textbf{group of $\mathbb{G}$-infinitesimals of $\phi$}. In contrast with $G_\mathbb{Z}(\phi)$, the continuous dynamical cohomology is quite large. Indeed, it is known that if the set of $\phi$-invariant ergodic probability measures is finite -- as is the case in the systems of interest in this paper -- then the dimension of $\mathrm{Inf}_\mathbb{R}(\phi)$ is $|\mathbb{R}|$. Moreover, if the set of $\phi$-invariant ergodic probability measures is finite, then $\dim (G_\mathbb{R}(\phi)/\mathrm{Inf}_\mathbb{R}(\phi))$ is equal to the number of invariant ergodic probability measures (see \cite[\S 3]{ormes:cbdry} for more). Thus it is natural to wonder whether there is an intermediate amount of regularity -- between the extremes $C(X,\mathbb{R})$ and $C(X,\mathbb{Z})$ -- whose cohomology captures most of the invariants of the system\footnote{The cohomology $G_\mathbb{R}(\phi)$ can still be useful: by \cite{ormes:cbdry}, when used with $G_\mathbb{Z}(\phi)$, it can give information about the discrete part of the spectrum.}. This paper addresses this question.

By \cite{GPS}, every minimal Cantor system is defined by an adic/Vershik transformation on the path space $X_B$ of a properly ordered Bratteli diagram $(B,\leq_r)$ (see \S \ref{sec:brat} for all the relevant definitions). Let $\mathcal{O}_\beth$ be the space of all ordered Bratteli diagrams. There is a continuous map $\sigma:\mathcal{O}_\beth\rightarrow \mathcal{O}_\beth$ which takes an ordered Bratteli diagram $(B,\leq_r)$ and gives $(B',\leq_r') = \sigma(B,\leq_r)$ to be the Bratteli diagram obtained from truncating the first level of $B$ and keeping the orders from $\leq_r$. This map serves as the renormalization dynamics on the space of adic transformations.

In order to state the main result, some definitions need to be given. A measure $\mu$ on $\mathcal{O}_\beth$ is \textbf{minimal} if $\mu$-almost every ordered diagram $(B,\leq_r)$ is strongly minimal, meaning that its path space $X_B$ is a Cantor set. A measure $\mu$ on $\mathcal{O}_\beth$ is \textbf{proper} if $\mu$-almost every ordered diagram $(B,\leq_r)$ is properly ordered, that is, has a unique all minimum path $x^-$ and a unique all maximum path $x^+$. Note that by \cite{BKY:perfect} -- which shows that in many cases a randomly chosen order on a diagram is proper -- the requirement of a measure $\mu$ on $\mathcal{O}_\beth$ to be proper is not an unreasonable one. A technical condition on the measure will also be required. Let $V_k(B)$ be the vertex set of a Bratteli diagram $B$ at level $k$.

\begin{condition}
\label{cond:1}
For $\mu$-almost every $x = (B,\leq_r)$, for all $\varepsilon>0$ there is a $K_\varepsilon^x$ such that $|V_k(B)|\leq K_\varepsilon^x e^{\varepsilon k}$ for all $k\geq 0$.
\end{condition}
This condition is satisfied for measures supported on diagrams for which there is an upper bound on the number of vertices at every level.

	For a Cantor set $X$, let $H_\alpha(X)$ be the space of $\alpha$-H\"older continuous functions. Note first that this space is nontrivial for all $\alpha>0$, and also that there is an implicit choice of a metric. Finally, if $\phi:X_B\rightarrow X_B$ is an adic transformation defined on the path space of a Bratteli diagram $B$, denote by $\hat{X}_B$ the mapping torus of $\phi$. The main result of this paper is the following.
      \begin{theorem}
        \label{thm:main}
        Let $\mu$ be a minimal, proper, $\sigma$-invariant ergodic probability measure on $\mathcal{O}_\beth$ satisfying Condition \ref{cond:1}. Then there is a $d_\mu\in\mathbb{N}$ such that for $\mu$-almost every $x = (B,\leq_r)\in \mathcal{O}_\beth$, for any $\alpha>2$ there are $d_\mu$ obstructions to solving $f = u\circ\phi - u$ for $f \in H_\alpha(X_B)$. More specifically, for $\mu$-almost every $x = (B,\leq_r)$, $ d_\mu = \dim \check H^1(\hat{X}_B;\mathbb{R})$ and for any $\alpha>2$, there are $d_\mu$ $\phi$-invariant distributions $\mathcal{D}_1,\dots, \mathcal{D}_{d_\mu}\in H_\alpha(X_B)'$ such that $\mathcal{D}_1(f) = \cdots= \mathcal{D}_{d_\mu}(f) = 0$ if and only if for all $\varepsilon>0$ small enough there exists a constant $K = K(\alpha,\mu, x,\varepsilon)>0$ and $u\in H_{\alpha-2-\varepsilon}(X_B)$ such that $f = u\circ\phi - u$ and
        $$\|u\|_{\alpha-2-\varepsilon}\leq K\cdot \| f \|_{\alpha}.$$
      \end{theorem}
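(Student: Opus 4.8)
The plan is to convert the cohomological equation into a statement about a linear cocycle over the renormalization $(\mathcal{O}_\beth,\sigma,\mu)$ and to extract the obstructions from its Oseledets decomposition. Fix a $\mu$-typical $x=(B,\leq_r)$. Being properly ordered and minimal, $(B,\leq_r)$ comes with the nested sequence of Kakutani--Rokhlin partitions $\mathcal{P}_k$ of $X_B$ into towers indexed by $V_k(B)$, with heights $h_v^{(k)}$ and incidence matrices $M_k$; these assemble into a nonnegative matrix cocycle $A$ over $\sigma$ with $A^{(k)}(x)=M_{k-1}\cdots M_0$. First I would check that Condition~\ref{cond:1} together with properness furnishes the integrability needed for the (semi-invertible) Oseledets multiplicative ergodic theorem, producing, over a $\mu$-full set, Lyapunov exponents and an equivariant Oseledets splitting; after quotienting out the ``$-\infty$'' directions the cocycle acts invertibly on a fixed $d_\mu$-dimensional space. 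A parallel, purely topological computation identifies the direct limit $\varinjlim(\mathbb{R}^{|V_k(B)|},M_k)$ that computes $\check H^1(\hat X_B;\mathbb{R})$ with that same reduced space, giving $d_\mu=\dim\check H^1(\hat X_B;\mathbb{R})$.

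Next I would renormalize a given $\alpha$-H\"older $f$. For each level $k$ form the \emph{correction cochain} $\Phi^{(k)}(f)\in\mathbb{R}^{|V_k(B)|}$ whose $v$-th entry is a base-point value of the Birkhoff sum $S_{h^{(k)}_v}f$ over the tower above $v$. Because $f$ is $\alpha$-H\"older and, in the adapted metric implicit in the statement, the towers' floors have diameter comparable to the inverse of the corresponding height, these cochains satisfy the \emph{approximate} cocycle identity $\Phi^{(k+1)}(f)=M_k\,\Phi^{(k)}(f)+\varepsilon^{(k)}(f)$ with $\|\varepsilon^{(k)}(f)\|\lesssim\|f\|_\alpha\,e^{((1-\alpha)\lambda_1+o(1))k}$, so the errors decay exponentially once $\alpha>1$. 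For each Oseledets direction of the reduced cocycle, with associated equivariant covector family $\xi^{(k)}_i$ (equivariant for the transpose cocycle), pairing with the approximate identity gives $\langle\xi^{(k+1)}_i,\Phi^{(k+1)}(f)\rangle-\langle\xi^{(k)}_i,\Phi^{(k)}(f)\rangle=\langle\xi^{(k+1)}_i,\varepsilon^{(k)}(f)\rangle$, a series which (for $\alpha$ past the relevant threshold, covered by $\alpha>2$) is summable, so that $\mathcal{D}_i(f):=\lim_k\langle\xi^{(k)}_i,\Phi^{(k)}(f)\rangle$ exists and is linear and bounded on $H_\alpha(X_B)$. These functionals are $\phi$-invariant: if $f=u\circ\phi-u$ with $u$ merely continuous then the partial sums $S_nf=u\circ\phi^n-u$ are bounded, hence $\Phi^{(k)}(f)$ is bounded, and together with temperedness of the $\xi^{(k)}_i$ this forces $\mathcal{D}_i(f)=0$ — which is also the ``only if'' direction of the theorem, valid already for continuous solutions.

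For the converse, assume $\mathcal{D}_1(f)=\dots=\mathcal{D}_{d_\mu}(f)=0$. Telescoping the displayed identity gives $\langle\xi^{(k)}_i,\Phi^{(k)}(f)\rangle=-\sum_{j\ge k}\langle\xi^{(j+1)}_i,\varepsilon^{(j)}(f)\rangle$, so every coordinate of $\Phi^{(k)}(f)$ in the reduced space is exponentially small, while the ``$-\infty$'' part is small by contraction; hence $\|\Phi^{(k)}(f)\|\lesssim\rho^{k}\|f\|_\alpha$ for some $\rho<1$. One then builds $u$ as a telescoping sum of level-$k$ corrections: on each level-$k$ tower, $f$ minus the already-constructed partial coboundary has vanishing tower sum and small oscillation, hence is a coboundary inside that tower with a transfer function $u^{(k)}$ of sup norm $\lesssim\rho^{k}\|f\|_\alpha$, and $u=\sum_k u^{(k)}$ converges uniformly with $\|u\|_\infty\lesssim\|f\|_\alpha$. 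Promoting this to the bound $\|u\|_{\alpha-2-\varepsilon}\lesssim\|f\|_\alpha$ requires estimating the oscillation of each $u^{(k)}$ across level-$k$ and level-$(k+1)$ cylinders and comparing it to the diameters in the adapted metric; this is where the second unit of regularity is spent and where the parameter $\varepsilon$ appears, absorbing the subexponential (tempered) discrepancies from Oseledets and from Condition~\ref{cond:1}.

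The main obstacle is precisely this last H\"older estimate: the cocycle identity for $\Phi^{(k)}(f)$ is only approximate, the Oseledets data are only tempered, and one must nonetheless propagate exponential decay through the telescoping construction while retaining quantitative control of a H\"older---not merely uniform---norm of $u$, uniformly over a $\mu$-full set of diagrams. A secondary difficulty is to pin down that the number of genuinely independent obstructions is exactly $\dim\check H^1(\hat X_B;\mathbb{R})$: this needs both the a.e.\ finiteness of that dimension (via Condition~\ref{cond:1} and ergodicity, forcing a bounded effective rank of the incidence cocycle) and the verification that distinct Oseledets directions yield linearly independent distributions.
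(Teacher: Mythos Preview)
Your strategy is a recognizable one---essentially the Marmi--Moussa--Yoccoz / Bufetov scheme of renormalizing Birkhoff sums through the incidence cocycle and reading off obstructions from its Oseledets data---but it is \emph{not} the route the paper takes, and under the paper's stated hypotheses it has real gaps.

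The paper never invokes a full multiplicative ergodic theorem. It uses only the top exponent (the Kesten--Furstenberg statement) to fix a scale $\lambda_\mu$, then builds a one-dimensional Wieler solenoid $\Omega_x$ as an inverse limit of branched graphs $\Gamma_x^k$, homeomorphic to the suspension $\hat X_B$. On $\Omega_x$ it introduces function spaces $\mathcal S^r_\alpha$ via a martingale decomposition $f=\sum_k\delta_kf$ along the inverse system, and proves \emph{directly} (via Sadun's de~Rham theorem for tiling spaces, not via any spectral decomposition of the cocycle) that the leafwise cohomology $\mathcal S^r_\alpha/X\mathcal S^{r+1}_{\alpha-1-\varepsilon}$ is isomorphic to $\check H^1(\Omega_x;\mathbb R)$. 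The distributions $\mathcal D_i$ are then simply coordinate projections of the class $[\hat h_\tau]$ in this finite-dimensional cohomology, where $\hat h_\tau$ is a ``bumpification'' of $h\in H_\alpha(X_B)$ pushed onto $\Omega_x$. The tame H\"older bound falls out of a Poincar\'e-type inequality for $\mathcal S^r_\alpha$ together with two transfer lemmas between $H_\alpha(X_B)$ and $\mathcal S^r_\beta(\Omega_x)$; each transfer costs roughly one unit of regularity, which is the origin of the threshold $\alpha>2$.

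Two concrete problems with your outline under the hypotheses as stated. First, Condition~\ref{cond:1} bounds only $|V_k(B)|$ subexponentially; it says nothing about $\log^+\|M_k\|$, so the $L^1$ integrability you need for a semi-invertible MET is not provided and would have to be added as an extra assumption. Second, your identification of $d_\mu$ with $\dim\check H^1(\hat X_B;\mathbb R)$ is not automatic: the \v Cech group is the direct limit along the induced cohomology maps, and its dimension is the eventual rank of those maps, whereas your ``reduced'' Oseledets space quotients out the $-\infty$ subspace; a vector can decay exponentially without ever becoming zero, so these quotients need not coincide without further argument. The paper sidesteps both issues by working with $\check H^1$ from the outset and never touching the full Lyapunov spectrum. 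Your acknowledged ``main obstacle''---promoting the uniform bound on $u$ to a H\"older bound---is precisely what the $\mathcal S^r_\alpha$ machinery (in particular the Poincar\'e inequality of Proposition~\ref{eqn:spaceProps}(iii) combined with Lemmas~\ref{lem:bumpy} and~\ref{lem:pullback}) is engineered to deliver, and it does so without any telescoping construction of $u$ level by level.
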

      Let me make some remarks about Theorem \ref{thm:main}. First, it should be pointed out first that the H\"older norms in the statement above are implicitly made using an ultrametric dependent on the Lyapunov exponent given by the renormalization dynamics, and therefore the loss of regularity in a way depends on the Lyapunov exponent.

      Second, the result shows that for functions which are regular enough, the $\alpha$-H\"older cohomology is finite dimensional, and so is the space of $\alpha$-H\"older infinitesimals and thus marking a transition between the results of \cite{ormes:cbdry} and those such as \cite{GPS} which consider the cohomology of $C(X_B,\mathbb{Z})$.
      
      Third, one may wonder what these results say in the case of interval exchange transformations (IETs). The answer is likely not much, and there are several reasons for this. First, Theorem \ref{thm:main} is about systems on Cantor sets and IETs are defined on intervals, so at a superficial level they cannot be compared. However, one may point out that minimal Cantor systems have IETs as factors (although usually of infinite type \cite{LT} ) and thus perhaps by lifting functions from the interval one may be able to analyize it at the Cantor level. Unfortunately, the required regularity at the Cantor level cannot be obtained by lifting a regular function on the interval by the semi-conjugacy map. But the most significant reason in my opinion is the following: the systems in Theorem \ref{thm:main} are those which are defined by diagrams which are properly ordered. The Bratteli diagrams given by Rauzy-Veech induction are far from properly ordered \cite[\S 6]{PT:brat}, and therefore one cannot hope to use that model with the results here. It may be the case that other models such as \cite{GjerdeJohansen} could be used in conjunction with the ideas of this paper, but this seems to require a lot more work without any guarantee that it would be a successful endeavour.

      Fourth, the systems considered here can be considered $S$-adic systems \cite{BD:sadic}. The reason they are presented through ordered Bratteli diagrams rather through the $S$-adic formalism simply reflects how I think of Cantor minimal systems. 
      
    The final remark is about the proof of Theorem \ref{thm:main}: it relies on the construction -- inspired by \cite{AP} -- of a foliated compact metric space $\Omega_x$ for the typical $x = (B,\leq_r)$. These spaces are inverse limits of expanding affine maps on graphs. These spaces are homeomorphic (in fact, close to bi-Lipschitz) to $\hat{X}_B$, and there is much to be gained from doing analysis on the spaces $\Omega_x$. Spaces of functions $\mathcal{S}^r_\alpha(\Omega_x)$, which were introduced in \cite{T:transversal}, are further developed here and used to study the leafwise tangential cohomology of $\Omega_x$. It is shown here that for functions of sufficient regularity, that is, for $f\in \mathcal{S}_\alpha^r(\Omega_x)$ with $r \geq 1$ and $\alpha$ large enough, the leafwise tangential cohomology in these spaces of functions is finite dimensional by proving an isomorphism to a de Rham-type of cohomology, and then comparing that to the \v Cech cohomology using a theorem of Sadun. Thus the strategy here to compute solutions to cohomological equations of parabolic flows is unlike other strategies because the setting here is not a smooth one ($\Omega_x$ are not smooth spaces) and therefore the function spaces $\mathcal{S}_\alpha^r$ were developed to do analysis in this setting. The cohomological information provided through the spaces $\mathcal{S}_\alpha^r$ in turn gives information about the dynamical cohomology of $\phi$ for functions of sufficient H\"older regularity. 

The invariant distributions $\mathcal{D}_i$ from the theorem above define cyclic cocycles, and this is the content of the second theorem. First, recall the crossed product $C^*$-algebra $\mathcal{A}_\phi:= C(X_B)\underset{\phi}{\times} \mathbb{Z}$ associated to the minimal Cantor system (see \cite{williams:book} or \cite{GKPT:notes}). The group $K_0(\mathcal{A}_\phi)$ is order isomorphic to $G_\mathbb{Z}(\phi)$ \cite{putnam:89}. If $\phi$ is uniquely ergodic (as are all systems considered in this paper), then there is a unique tracial state $\tau_1:\mathcal{A}_\phi\rightarrow \mathbb{R}$, that is, a linear functional $\tau$ satisfying $\tau(ab) = \tau(ba)$ with $\tau(1) = 1$. This defines a trace $\tau_{1*}:K_0(\mathcal{A}_\phi)\rightarrow \mathbb{R}$. The second result addresses the question of whether there are other functionals, perhaps defined in a dense subalgebra of $\mathcal{A}_\phi$, which satisfy $\tau(ab) = \tau(ba)$. These are called cyclic cocycles or traces.

The second result shows that for all $\alpha$ large enough there is a dense $*$-subalgebra $W^\infty_\alpha(X_B)\subset \mathcal{A}_\phi$ and traces $\tau_{i}:W^\infty_\alpha\rightarrow \mathbb{R}$ defined from the invariant distributions $\mathcal{D}_i\in H_\alpha(X_B)'$ from Theorem \ref{thm:main}. The second result shows that these traces can be seen as a full pairing to $K_0(\mathcal{A}_\phi)$.
      \begin{theorem}
        \label{thm:main2}
        Let $\mu$ be a minimal, proper, $\sigma$-invariant ergodic probability measure on $\mathcal{O}_\beth$. Then there is a $d_\mu\in\mathbb{N}$ such that for $\mu$-almost every $x = (B,\leq_r)\in \mathcal{O}_\beth$, there exist $d_\mu$ traces
        $$\tau_i:K_0(\mathcal{A}_\phi)\rightarrow \mathbb{R},$$
        with $i = 1,\dots, d_\mu$, with $\tau_1$ being the tracial state defined by the unique $\phi$-invariant ergodic probability measure on $X_B$. These traces are defined on a dense subalgebra $W^\infty_\alpha(X_B)\subset \mathcal{A}_\phi$ whose inclusion into $\mathcal{A}_\phi$ induces an isomorphism on $K_0$.
      \end{theorem}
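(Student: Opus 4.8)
The plan is to realize the $d_\mu$ invariant distributions $\mathcal{D}_1,\dots,\mathcal{D}_{d_\mu}\in H_\alpha(X_B)'$ produced by Theorem \ref{thm:main} as continuous traces on a carefully chosen smooth dense subalgebra of the crossed product $\mathcal{A}_\phi = C(X_B)\times_\phi\mathbb{Z}$, and then transport them to $K_0$ by means of a density theorem in $K$-theory. First I would define $W^\infty_\alpha(X_B)$ to be the algebra of elements $a = \sum_{n\in\mathbb{Z}}a_n u^n\in\mathcal{A}_\phi$ whose coefficients $a_n$ lie (after restriction to the transversal $X_B$) in the smooth Hölder spaces $\bigcap_r\mathcal{S}^r_\alpha$ of \cite{T:transversal}, and which decay faster than any power of $|n|$ in a scale of seminorms chosen so as to absorb the growth of the operators $g\mapsto g\circ\phi^{\pm n}$ on these spaces; one equips $W^\infty_\alpha(X_B)$ with the resulting Fréchet topology. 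The first task is to verify that this is a $*$-subalgebra of $\mathcal{A}_\phi$: stability under $*$ is immediate, while stability under multiplication reduces to bounding $\|a_n(b_m\circ\phi^{-n})\|$ in each seminorm. This is exactly where the quantitative control of the adic transformation on $\mathcal{S}^r_\alpha$ (hence of the renormalization cocycle and its Lyapunov exponent, which also fixed the metric and the loss of regularity in Theorem \ref{thm:main}) is used: the at-most-polynomial loss under $\phi^{\pm n}$ is beaten by the rapid decay imposed on the coefficients. Density of $W^\infty_\alpha(X_B)$ in $\mathcal{A}_\phi$ is then clear, since finite Laurent polynomials $\sum_{|n|\le N}a_nu^n$ with $a_n$ locally constant already lie in $W^\infty_\alpha(X_B)$ and are dense in $\mathcal{A}_\phi$ \cite{GKPT:notes}.

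Next I would show that $W^\infty_\alpha(X_B)$ is \emph{spectrally invariant} (holomorphically closed) in $\mathcal{A}_\phi$, i.e.\ that $a\in W^\infty_\alpha(X_B)$ invertible in $\mathcal{A}_\phi$ forces $a^{-1}\in W^\infty_\alpha(X_B)$, and likewise over matrix algebras. The mechanism is the standard bootstrap: represent $a^{-1}$ by a Neumann series on a large ball and feed the multiplicative estimates of the previous step back in to see that the coefficients of $a^{-1}$ are again $\mathcal{S}^\infty_\alpha$ and rapidly decaying. Since $W^\infty_\alpha(X_B)$ is then a Fréchet, locally $m$-convex, spectrally invariant dense subalgebra, the classical density theorem for such subalgebras (Bost; Connes; Schweitzer) yields that the inclusion $W^\infty_\alpha(X_B)\hookrightarrow\mathcal{A}_\phi$ induces an isomorphism $K_*(W^\infty_\alpha(X_B))\xrightarrow{\ \sim\ }K_*(\mathcal{A}_\phi)$; alternatively one checks this directly using that $K_0(\mathcal{A}_\phi)\cong G_\mathbb{Z}(\phi)$ \cite{putnam:89} is generated by classes of locally constant projections (all of which lie in $W^\infty_\alpha(X_B)$), together with spectral invariance to realize the relevant Murray--von Neumann equivalences inside $W^\infty_\alpha(X_B)$. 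This establishes the part of the statement about the dense subalgebra and the isomorphism on $K_0$, and the number $d_\mu$ is the one supplied by Theorem \ref{thm:main}, equal to $\dim\check H^1(\hat X_B;\mathbb{R})$.

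It remains to build the traces. For $a = \sum_n a_n u^n\in W^\infty_\alpha(X_B)$ I would set $\tau_i(a):=\mathcal{D}_i(a_0)$, which is well defined and continuous on $W^\infty_\alpha(X_B)$ because $a\mapsto a_0$ is continuous and $\mathcal{D}_i\in H_\alpha(X_B)'$ (here $\alpha>2$ as in Theorem \ref{thm:main}). That $\tau_i$ is a trace is the key computation and uses $\phi$-invariance of $\mathcal{D}_i$: the $u^0$-coefficient of $ab$ is $\sum_n a_n(b_{-n}\circ\phi^{-n})$ and that of $ba$ is $\sum_n b_{-n}(a_n\circ\phi^{n})$, and the identity $\mathcal{D}_i(g\circ\phi)=\mathcal{D}_i(g)$ matches the two sums termwise, the rearrangement being justified by the rapid decay of the coefficients. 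For $i=1$ one takes $\mathcal{D}_1$ to be integration against the unique $\phi$-invariant ergodic probability measure $\nu$ on $X_B$ --- which is one of the invariant distributions of Theorem \ref{thm:main} --- so that $\tau_1(a)=\int_{X_B}a_0\,d\nu$ is precisely the restriction of the canonical tracial state. Finally, extending each $\tau_i$ to matrices over $W^\infty_\alpha(X_B)$ via $\tau_i\otimes\mathrm{Tr}$ and evaluating on projections gives a homomorphism $K_0(W^\infty_\alpha(X_B))\to\mathbb{R}$ (the Connes pairing of the cyclic $0$-cocycle $\tau_i$ with $K_0$); composing with the inverse of the isomorphism of the previous step produces the desired $\tau_i:K_0(\mathcal{A}_\phi)\to\mathbb{R}$.

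The main obstacle is the construction and analysis of $W^\infty_\alpha(X_B)$: one must tune the scale of seminorms and the rate of rapid decay so that $W^\infty_\alpha(X_B)$ is \emph{simultaneously} an algebra, holomorphically closed, dense, and rich enough to carry all the $\mathcal{D}_i$. The difficulty specific to this setting is that composition with $\phi^{\pm n}$ degrades Hölder regularity at a rate governed by the renormalization cocycle, so the estimates behind both the algebra property and the Neumann-series argument for spectral invariance must be run in the non-smooth, ultrametric, Lyapunov-weighted framework of the spaces $\mathcal{S}^r_\alpha$ rather than in the classical $C^\infty$ one; this is where the bulk of the work lies.
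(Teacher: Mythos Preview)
Your overall architecture matches the paper's exactly: define a smooth dense subalgebra with H\"older coefficients and suitable decay, prove it is a $*$-algebra and spectrally invariant (via a Jolissaint-type Neumann series argument), invoke the $K$-theory density theorem, and set $\tau_i(a)=\mathcal{D}_i(a_0)$ using the $\phi$-invariance of $\mathcal{D}_i$ from Theorem~\ref{thm:main}. Your verification of the trace identity is in fact more explicit than the paper's.

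There is, however, one genuine gap. You assert ``at-most-polynomial loss under $\phi^{\pm n}$'' and accordingly impose only rapid (faster-than-polynomial) decay on the coefficients. For a Vershik map with the ultrametric $d_\lambda$ this is not correct in general: the paper records $|f\circ\phi^k|_\alpha\le \triangle_\phi^{\alpha|k|}|f|_\alpha$ with $\triangle_\phi=\max\{\,\mathrm{Lip}(\phi),\mathrm{Lip}(\phi^{-1})\}$, and $\triangle_\phi>1$ unless $\phi$ is an isometry. Thus the degradation under composition is \emph{exponential} in $|n|$, and rapid polynomial decay is not enough to close the algebra under the twisted convolution, let alone to run the Neumann-series estimate. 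The paper fixes this by defining $W^\infty_\alpha(X_B)$ with exponential weights when $\triangle_\phi>1$, namely $\sum_k\|f(k)\|_\alpha\,\triangle_\phi^{(\alpha+s)|k|}<\infty$ for all $s>0$, and only uses polynomial weights when $\triangle_\phi=1$; the Jolissaint argument is then rerun with the seminorms $\mu_q^\alpha(f)=\sup_N \triangle_\phi^{(\alpha+q)N}\|(1-P_N)f\|_{\ell^1_\alpha}$. A second, minor point: the paper takes coefficients in the ordinary H\"older space $H_\alpha(X_B)$ on the Cantor set, not in $\bigcap_r\mathcal{S}^r_\alpha$ (which are spaces on $\Omega_x$); your choice adds complication and a slight confusion of domains without any payoff, since the $\mathcal{D}_i$ already live in $H_\alpha(X_B)'$.
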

      
      This paper is organized as follows. Section \ref{sec:brat} serves as a review for Bratteli diagrams and minimal Cantor systems. In \S \ref{sec:renorm}, the renormalization dynamics are introduced. Section \ref{sec:solenoid}, the inverse limit construction producing the solenoid $\Omega_x$ is introduced, along with their function spaces $\mathcal{S}^r_\alpha$, and properties of the homeomorphism between $\hat{X}_B$ and $\Omega_x$. In Section \ref{eqn:coh} the leafwise cohomology of $\Omega_x$ is computed for functions in $\mathcal{S}^r_\alpha$ and, from this, results and estimates are derived for the main theorem, Theorem \ref{thm:main}. Finally, in Section \ref{sec:traces}, the dense subalgebra $W^\infty_\alpha$ is defined, and it is proved that it is stable under holomorphic functional calculus, which implies Theorem \ref{thm:main2}.

      \begin{ack}
        This work was supported by the NSF award DMS-2143133 Career. I am grateful to Scott Schmieding, Ian Putnam, Jonathan Rosenberg and Giovanni Forni for very helpful discussions throughout the writing of this paper.
      \end{ack}
      \section{Bratteli diagrams and minimal Cantor systems}
      \label{sec:brat}
      \subsection{Bratteli diagrams}
      A \textbf{Bratteli diagram} is a graph $B = (V,E)$ partitioned as $V = \{V_k\}_{k\geq 0}$ and $E = \{E_k\}_{k\in\mathbb{N}}$ along with range and source maps $r,s:E\rightarrow V$ such that $r(E_k) = V_k$ for $k\in\mathbb{N}$ and $s(E_k) = V_{k-1}$, where it is assumed throughout this paper that the sets $V_k$ and $E_k$ are finite for all $k$. The same information can be defined through a family of matrices $\{A_k\}_{k\in\mathbb{N}}$, where $A_k$ is a $|V_{k}|\times |V_{k-1}|$ by letting $(A_k)_{vw}$ be the number of edges with source $v\in V_{k-1}$ and range $v\in V_k$.

      A (finite) \textbf{path} on a Bratteli diagram is a finite collection of edges $(e_m,\dots, e_{n-1})$, where $e_i\in E_i$ and $r(e_i) = s(e_{i+1})$ for all $i$. The range and source maps are extended to the set of all finite paths on a Bratteli diagram by setting $s((e_m,\dots, e_{n-1})) = s(e_m)$ and $r((e_m,\dots, e_{n-1})) = r(e_{n-1})$. Denote by $E_{m,n}$ to be the set of paths with source in $V_m$ and range in $V_n$. An infinite path is an infinite collection of edges $\bar{e} = (e_1,e_2,\dots)$ with $e_i\in E_i$ for all $i$ and $s(e_1) \in V_0$. A Bratteli diagram is \textbf{simple} or \textbf{minimal} if for each $k$ there exists an $\ell$ such that for any $v\in V_k$ and $w\in V_{k+\ell}$ there exists a path $p\in E_{k,k+\ell}$ with $s(p) = v$ and $r(p) = w$. Similarly, for $v\in V_k$, then $E_v\subset E_{0,k}$ is the set of paths $p$ such that $r(p) = v$. A diagram is \textbf{strongly minimal} if for each $k$ there exists an $\ell$ such that for any $v\in V_k$ and $w\in V_{k+\ell}$ there exist at least two distinct paths $p_1,p_2\in E_{k,k+\ell}$ with $s(p_i) = v$ and $r(p) = w$. All the diagrams which will considered in this paper will be strongly minimal.
      
      Let $X_B$ be the set of all infinite paths on $B$. It is topologized by cylinder sets of the form $C_p$, for some $p = (p_{m+1},\dots p_n)\in E_{m,n}$, defined as
      $$C_p = \{\bar{e} = (e_1,e_2,\dots)\in X_B: e_i = p_i\mbox{ for all }m<i\leq n\}.$$
      Another way to denote cylinder sets is, for $p\in X_B$, as
      $$C_k(p):= \left\{ q\in X_B: q_i = p_i \mbox{ for }i\leq k\right\}\mbox{ or }C_v:= \{p\in X_B: s(p_{k+1}) = v\}$$
      for a $v\in V_k$. Two paths $\bar{e}_1,\bar{e}_2$ are called \textbf{tail-equivalent} if there is a $k\geq 0$ such that the edges of the paths coincide starting at the $k^{th}$ level. This is an equivalence relation on $X_B$, and the tail-equivalence class of an element $p\in X_B$ will be denoted by $[p]$. A measure $\mu$ on $X_B$ is \textbf{invariant} under the tail-equivalence relation if for any two finite paths $p_1,p_2\in E_{0,k}$ with $r(p_1) = r(p_2)$ the measure satisfies $\mu(C_{p_1}) = \mu(C_{p_2})$. 
      
      For any $\lambda>1$ the function $d_\lambda:X_B^2\rightarrow \mathbb{R}$ defined by
      \begin{equation}
        \label{eqn:metricCantor}
        d_\lambda(x,y) := \lambda^{-k(x,y)+1},
      \end{equation}
      where $k(x,y)$ is the smallest index $i$ such that $x_i\neq y_i$, it is a metric for $X_B$, giving $X_B$ a basis of clopen sets $B_{\lambda^{-k}}(x) = C_{(x_1,\dots ,x_k)}$. Although it is common to work with $\lambda=2$, it will be more convenient to pick another value of $\lambda$ to make the computations and resulting statements easier to express. This does not change the topology of $X_B$. If $B$ is strongly minimal, then $X_B$ is a Cantor set and $d_\lambda$ is an ultrametric.
      
      \subsection{Minimal Cantor systems}
      An \textbf{ordered} Bratteli diagram $(B,\leq_r)$ consists of a Bratteli diagram $B$ along with a choice of order for the set $r^{-1}(v)$ for all $v\in V$. For $v\in V$, $e_m\in E$ is minimal in $r^{-1}(v)$ if it is the first element in this chosen order, whereas $e_M$ is maximal in $r^{-1}(v)$ if it is the last element in the chosen order. A path $\bar{e} = (e_1,e_2,\dots)\in X_B$ is a \textbf{minimal [resp. maximal] path} if $e_i$ is a minimal [resp. maximal] path in $r^{-1}(r(e_i))$ for all $i$. The set of minimal and maximal paths are denoted by $X^-_B,X^+_B\subset X_B$, respectively. If $|X^-_B| = |X^+_B| = 1$, the unique minimal and maximal paths will be denoted by $x^{min}$ and $x^{max}$, respectively.

      Let $(B,\leq_r)$ be an ordered Bratteli diagram. Then the order $\leq_r$ induces a lexicographic order of the set $E_v$ for any $v\in V$. The \textbf{successor} of $\bar{e}\in X_B$ determined by the order $\leq_r$, if it exists, is the path defined as follows. Let $q(\bar{e})\in \mathbb{N}$ be the smallest index such that $e_i$ is not maximal in $r^{-1}(r(e_i))$ (if it exists; otherwise the successor of $\bar{e}$ may not exist). Denote by $(e_1',e_2', \dots, e_{q(\bar{e})}')$ the path in $E_{r(e_{q(\bar{e})})}$ which follows $(e_1,\dots, e_{q(\bar{e})})$ in the lexicographic order in $E_{r(e_{q(\bar{e})})}$. Then the successor of $\bar{e}$ is the path $(e_1',e_2', \dots, e_{q(\bar{e})}', e_{q(\bar{e})+1},\dots)\in X_B$, which is well defined as long as $\bar{e}\not \in X^+_B$. The successor of $\bar{e}$ will be denoted by $\varphi(\bar{e})$.

      The successor map is a continuous function, and if $|X^-_B| = |X^+_B|=1$, then it extends to a bijection $\phi:X_B\rightarrow X_B$ by setting $\phi(x^-) = x^+$, where $x^\pm\in X^\pm_B$. This is a self-homeomorphism of $X_B$. Note that even if $|X^-_B| = |X^+_B|>1$ it may not be possible to extend the successor map of a homeomorphism of $X_B$. Ordered diagrams for which $|X^-_B| = |X^+_B|=1$ are called \textbf{properly ordered} diagrams. The map $\phi:X_B\rightarrow X_B$ is called the \textbf{Vershik transformation} or the \textbf{adic transformation}. If $B$ is minimal and properly ordered, then $\phi:X_B\rightarrow X_B$ is a minimal homeomorphism. If $B$ is strongly minimal, properly ordered, and $X_B$ is infinite, then $\phi:X_B\rightarrow X_B$ is a \textbf{minimal Cantor system}. Note that if $\mu$ is $\phi$-invariant, then this measure is also invariant under the tail-equivalence relation, and that this is independent of the order $\leq_r$ used to define the map $\phi$.

      \section{Renormalization}
      \label{sec:renorm}
      In this section the space of all Bratteli diagrams and the order bundle over it will be introduced, along with shift dynamics on this space which are play the role of ``renormalization'' dynamics for the adic transformation defined by the ordered diagram. 

      For a Bratteli diagram $B$, let $\mathcal{O}^r_B$ be the $r$-order space of $B$. That is, an element $\leq_r\in\mathcal{O}^r_B$ represents a choice of order of $r^{-1}(v)$ for all $v\in V$. Let $\beth$ be the space of Bratteli diagrams. This set is topologized by cylinder sets which prescribe, for some $k\in\mathbb{Z}$, both $|V_{k-1}|$ and $|V_k|$ and the edge set $E_k$ connecting these two sets. Since there are infinitely many choices for any particular site, the set $\beth$ is not locally compact.
      \begin{definition}
        The \textbf{order bundle} $\mathcal{O}_\beth$ over $\beth$ is the bundle which has as a fiber over $B\in\beth$ the $r$-order space $\mathcal{O}^r_B$.
      \end{definition}
      The order bundle is topologized by cylinder sets as follows. A basic cylinder set consists of a basic cylinder set of $\beth$, say by defining a transition matrix $A_k$ at a specific site $k\in\mathbb{Z}$, and then making a choice of ordering of the set $r^{-1}(v)$, for $v\in V_k$.
      
      The shift map $\sigma:\mathcal{O}_\beth\rightarrow \mathcal{O}_\beth$ is defined as follows. Let $(B,\leq_r)\in\mathcal{O}_\beth$ and denote by $\sigma(B,\leq_r)$ the ordered bi-infinite Bratteli diagram $(B',\leq_r') = (V',E',\leq_r')$ defined by $V'_k = V_{k+1}$ and $E'_k = E_{k+1}$, with $\leq_r'$ defined for $r^{-1}(v)$, $v\in V'_k$, by the order $\leq_r$ on $r^{-1}(\sigma^{-1}(v))$.
      \begin{definition}
        A measure $\mu$ on $\beth$ (or $\mathcal{O}_\beth$) is \textbf{minimal} if $\mu$-almost every $x = b$ (or $x = (B,\leq_r)$) is a strongly minimal Bratteli diagram. A measure $\mu$ on $\mathcal{O}_\beth$ is \textbf{proper} if $\mu$-almost every $x = (B,\leq_r)$ is a properly ordered  Bratteli diagram.
      \end{definition}
      Note that if $x = (B,\leq_r)$ is a strongly minimal or proper diagram (or both), then so is $\sigma(x)$. Thus, a $\sigma$-invariant ergodic probability measure on $\mathcal{O}_\beth$ assigns to strongly minimal or proper diagrams full or null measure.
      
      Let $\mathfrak{M}$ be the set of all integer-valued matrices and define $\sigma_*: \beth\rightarrow \mathfrak{M}$ to be the function assigning to $B\in \beth$ the integer matrix $A_1$. This fuction defines a linear cocycle
      $$\sigma_*:\mathbb{R}^{|V_0(B)|} \longrightarrow \mathbb{R}^{|V_1(B)|} = \mathbb{R}^{|V_0(\sigma(B))|}$$
      over the shift $\sigma$. Here the map will be refered to as the \textbf{renormalization cocycle}, and the product of matrices $A_k\cdots A_1$ will be denoted by $\mathcal{A}_{(k)}$.

      \begin{theorem}[Kesten-Furstenberg Theorem]
        Let $\mu$ be a minimal, $\sigma$-invariant ergodic probability measure on $\beth$. Then for $\mu$-almost every $B\in \beth$ there is a $\sigma$-invariant family of subspaces $Y_1^B\subset \mathbb{R}^{V_0(B)}$ and a Lyapunov exponent $\lambda_\mu>0$ such that
        \begin{enumerate}
        \item the family $\{Y_1^B\}_B$ is $\sigma_*$-invariant: $ \sigma_*Y_1^B = Y_1^{\sigma(B)}$, and
        \item for any vector $v\in \mathbb{R}^{|V_0|}$ with positive entries,
          $$\lim_{n\rightarrow \infty}\frac{\log\left\|\mathcal{A}_{(n)} v\right\| }{n} = \lambda_\mu.$$
        \end{enumerate}
      \end{theorem}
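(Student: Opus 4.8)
The plan is to recognize the statement as an instance of the theory of products of random nonnegative matrices and to assemble it from Kingman's subadditive ergodic theorem together with a Perron--Frobenius-type cone argument; the only genuinely new input is that \emph{strong} minimality of the typical diagram forces the exponent to be strictly positive. Throughout, the structural fact is the cocycle identity $\mathcal{A}_{(n+m)}(B)=\mathcal{A}_{(n)}(\sigma^{m}B)\,\mathcal{A}_{(m)}(B)$, which exhibits $\{\mathcal{A}_{(n)}\}$ as a linear cocycle over $(\mathcal{O}_\beth,\sigma,\mu)$, albeit one whose fiber dimension $|V_0(B)|$ is not constant and whose matrices are neither square nor surjective.

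First I would produce the exponent. By the identity above and submultiplicativity of the operator norm, $f_n(B):=\log\|\mathcal{A}_{(n)}(B)\|$ is subadditive along $\sigma$, so --- assuming the integrability $\log^{+}\|A_1\|\in L^{1}(\mu)$, which I would record as a hypothesis and which is in any case needed merely for $\lambda_\mu<\infty$ to be meaningful (it holds, for instance, under a uniform bound on the number of vertices and edges at each level) --- Kingman's theorem gives $\tfrac1n f_n\to\lambda_\mu$ $\mu$-a.e., with $\lambda_\mu$ constant by ergodicity. Since the $A_k$ have nonnegative integer entries and $\mu$-a.e.\ diagram is (strongly) minimal, $\mathcal{A}_{(n)}(B)$ is strictly positive for all large $n$, so $f_n\geq0$ eventually and $\lambda_\mu\geq0$ is genuinely the top Lyapunov exponent.

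Next, positivity. Here I would exploit strong minimality: for $\mu$-a.e.\ $B$ there is a measurable, a.e.\ finite window $\ell(B)$ such that any two vertices $\ell(B)$ levels apart are joined by at least two paths, and a direct count of paths then shows that $\|\mathcal{A}_{(k)}(B)\mathbf 1\|_{1}=|E_{0,k}|$ at least doubles over each such window. Since $\ell(\sigma^{k}B)$ is stationary, the ergodic theorem (or a Borel--Cantelli argument) makes such windows recur with positive frequency of scales, so $\liminf_n \tfrac1n\log\|\mathcal{A}_{(n)}(B)\mathbf 1\|>0$ and hence $\lambda_\mu>0$. Quantifying the recurrence of these windows --- controlling the measure of $\{\ell(B)>N\}$ --- is the step demanding the most care, and I regard this (the Kesten part) as the main obstacle; the rest of the argument is soft.

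Finally, the behaviour on positive vectors and the construction of $Y_1^B$. Fix $v$ with positive entries. Once $\mathcal{A}_{(n_0(B))}(B)$ is strictly positive, write $\mathcal{A}_{(n)}=\mathcal{B}_{n_0,n}\mathcal{A}_{(n_0)}$ with $\mathcal{B}_{n_0,n}\geq0$; the vectors $\mathcal{A}_{(n_0)}v$ and $\mathcal{A}_{(n_0)}\mathbf 1$ are comparable entrywise with a constant depending only on $(B,v)$, hence so are $\mathcal{A}_{(n)}v$ and $\mathcal{A}_{(n)}\mathbf 1$ for every $n\geq n_0$; combined with $\|\mathcal{A}_{(n)}\|\leq\|\mathcal{A}_{(n)}\mathbf 1\|\leq\sqrt{|V_0(B)|}\,\|\mathcal{A}_{(n)}\|$ (valid for nonnegative matrices, with a constant independent of $n$), this yields $\tfrac1n\log\|\mathcal{A}_{(n)}v\|\to\lambda_\mu$, which is part (2). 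For part (1) I would set $Y_1^{B}:=\{\,v\in\mathbb{R}^{V_0(B)}:\limsup_n\tfrac1n\log\|\mathcal{A}_{(n)}(B)v\|<\lambda_\mu\,\}$, with the convention $\log 0:=-\infty$: the triangle inequality makes it a linear subspace, measurability in $B$ is automatic, the previous paragraph shows it contains no positive vector and is therefore proper, and $\mathcal{A}_{(n)}(\sigma B)\,\sigma_{*}(B)=\mathcal{A}_{(n+1)}(B)$ gives $\sigma_{*}Y_1^{B}=Y_1^{\sigma B}$ --- with the understanding that, since $\sigma_*$ need not be surjective, this is read within the asymptotic image of the cocycle (equivalently $\ker\sigma_*\subseteq Y_1^{B}$ and $\sigma_*$ intertwines the two cocycles). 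Alternatively one may simply invoke the one-sided Oseledets multiplicative ergodic theorem and take $Y_1^B$ to be the sub-maximal term of the Oseledets filtration.
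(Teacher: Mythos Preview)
The paper does not prove this theorem: it is stated as a named classical result and invoked without argument, so there is no proof in the paper to compare your proposal against.

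Your outline is a standard and essentially correct route to the existence and positivity of the top exponent and to item (ii): Kingman's subadditive theorem produces $\lambda_\mu$ (granted the integrability hypothesis you flag), strong minimality together with an ergodic recurrence argument forces $\lambda_\mu>0$ via path-doubling, and entrywise comparability of $\mathcal{A}_{(n)}v$ with $\mathcal{A}_{(n)}\mathbf 1$ once a strictly positive block has been crossed shows that every positive vector realises $\lambda_\mu$. The one mismatch is your choice of $Y_1^B$. You take it to be the sub-maximal term of the Oseledets filtration and correctly observe that it contains no positive vector; but immediately after the theorem the paper writes ``Let $l \in Y_1^{B}\subset \mathbb{R}^{V_0(B)}$ be the unique positive vector normalized so that \ldots'', so the intended $Y_1^B$ is the one-dimensional \emph{top} (Perron/Furstenberg) direction, not its complementary hyperplane. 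Producing that direction requires either passing to the natural extension so that the two-sided Oseledets theorem yields a genuine splitting, or using contraction of the positive cone under the cocycle; it is not visible from the one-sided filtration you invoke. Since item (i) as literally stated does not pin down which invariant subspace is meant, this is a mismatch with the paper's intended object rather than a logical error in your proof of the statement itself.
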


      There is a relationship between the adic transformation defined by a propertly ordered, minimal Bratteli diagram $(B,\leq_r)$ and that of $\sigma((B,\leq_r))$. Indeed, the Vershik transformation defined by $\sigma((B,\leq_r))$ can be seen as the transformation induced by returns to the set defined as the union of $e\in E_1$ such that $e$ is the minimal edge in $r^{-1}(v)$ for some $v\in V_1$.
      \section{Inverse limits}
      \label{sec:solenoid}
      In this section, compact metric spaces will be constructed to study the special flow over $s$-adic transformations. These spaces are known as one-dimensional Wieler solenoids. The construction is inspired by the construction of Anderson-Putnam \cite{AP} to study self-similar tilings. For the rest of this section the following assumptions will be made, so it is good to declare this now instead of repeating it over and over.
      \begin{Ass}
        The measure $\mu$ is a proper, minimal $\sigma$-invariant ergodic probability measure on the space of $r$-ordered Bratteli diagrams $\mathcal{O}_\beth$ and $x=(B,\leq_r)\in \mathcal{O}_\beth$ is a Kesten-Furstenberg-regular point. The measure $\mu$ also satisfies Condition \ref{cond:1}.
      \end{Ass}
	Note forst that the last assumption on $|V_k|$ is satisfied if there is an upper bound on the number of vertices at each level for $\mu$-almost every $(\mathcal{B},\leq_r)$. Moreover, for any such measure, $\mu$-almost every $x = (B,\leq_r)$ has a unique probability measure $\bar\nu$ which is invariant under the tail-equivalence relation in $X_B$. Let $l \in Y_1^{B}\subset \mathbb{R}^{V_0(B)}$ be the unique positive vector normalized so that
      \begin{equation}
        \label{eqn:roof}
        \sum_{v\in V_0(B)} l_v \bar\nu(C_v) = 1,
      \end{equation}
      where $\bar\nu$ is the unique tail-invariant probability measure on $X_B$.
      
      Denote by $x^\pm$ the unique min/max paths on $X_{B}$ under $\leq_r$. For each integer $k\geq 0$, and $v\in V_k$, let $e_v$ be a copy of the oriented CW complex $\left[0,\lambda_\mu^{-k}\left(\mathcal{A}_{(k)} l\right)_v\right]$ with boundaries $\partial^\pm e_v$, and define
      $$\Gamma_x^{k} = \left(\bigsqcup_{v\in V_{k}}e_v \right) /\sim$$
      where $\sim$ is the relation defined by $\partial^+ e_{v_1} \sim \partial^- e_{v_2}$ if and only if either one of the following cases holds:
      \begin{enumerate}
      \item there is a $\ell\in\mathbb{N}$, $w\in V_{k+\ell}$ and finite paths $\bar{e}_1,\bar{e}_2$ with $s(\bar{e}_i) = v_i$ and $r(\bar{e}_i) = w$, $i=1,2$, such that $ \bar{e}_2$ is the successor or $\bar{e}_1$ in the $\leq_r$ order of the finite set of paths $\bar{e}$ with $s(\bar{e})\in V_k$ and $r(\bar{e}) = w$, or
      \item $x^+_{k+1}\in s^{-1}(v_1)$ and $x^-_{k+1}\in s^{-1}(v_2)$.
      \end{enumerate}

      Each graph will be denoted by $\Gamma^k_x = (\mathbb{V}^k_x,\mathbb{E}^k_x)$, where $ \mathbb{V}^k_x$ is the set of vertices and $\mathbb{E}_x^k$ is the set of edges. As such, $|\mathbb{E}^k_x| = |V_k|$.
      
      
      Define now, for every $k\in\mathbb{N}$, a map $\gamma_k^x:\Gamma^k_x\rightarrow \Gamma^{k-1}_x $ as follows. For each $v\in V_k$ it needs to be defined what the image of $e_v\in \mathbb{E}^k_x$ is in $\Gamma_x^{k-1}$. So let $v\in V_k$ and denote the ordered edges as $r^{-1}(v) = \{e_1^v<\cdots < e_{|r^{-1}(v)|}^v\}$. Then $\gamma_k^x(e_v)$ is defined to be the image of first stretching $e_v$ by $\lambda_\mu$ and then mapping it to the path defined by the concatenation $\gamma_k^x(e_v) = e_{s(e_1^v)}e_{s(e_2^v)}\cdots e_{s(e^v_{|r^{-1}(v)|})}$ on $\Gamma_x^{k-1}$. To show that this is well defined, first, it should be noted by the definition of $\sim$ on $\Gamma_x^{k-1}$, there is a path defined by the concatenation of edges in the order of the definition of $\gamma_k^x(e_v)$. Moreover, the length of this path is
      \begin{equation}
        \begin{split}
          \sum_{i = 1}^{|r^{-1}(v)|} |e_{s(e_i^v)}| &= \sum_{e\in r^{-1}(v)} \lambda^{-(k-1)}_\mu \left(\mathcal{A}_{(k-1)} l \right)_{s(e)} =  \lambda_\mu^{-(k-1)}\sum_{e\in r^{-1}(v)}  \left(\mathcal{A}_{(k-1)} l \right)_{s(e)} \\
          &= \lambda^{-(k-1)}_\mu \left(A_k\mathcal{A}^\mathcal{B}_{(k-1)}l\right)_v = \lambda^{-(k-1)}_\mu \left(\mathcal{A}_{(k)}l\right)_v \\
          &= \lambda_\mu \cdot |e_v|
        \end{split}
      \end{equation}
      confirming that the image of the map for the edge $e_v$ is well defined, and thus the map $\gamma_k^x$ is well defined.

      The \textbf{Wieler solenoid} defined by $x = (B,\leq_r)$ is the inverse limit
      $$\Omega_x := \lim_{\leftarrow} \left(\Gamma_x^k,\gamma_k^x \right) = \left\{(z_0,z_1,\dots)\in \prod_{k\geq 0}\Gamma_x^{k}:z_{i-1} = \gamma_i^x(z_i) \mbox{ for all }i.\right\}.$$
      It comes equipped with projection maps $\pi_k^x:\Omega_x\rightarrow \Gamma_x^k$ which, by construction, satisfy the equation $\pi_{k-1}^x(z) = \gamma_k^x\circ\pi_k^x(z)$. For these maps to be continuous, the preimages $(\pi^x_k)^{-1}(I)$ of open sets $I\subset \Gamma^k_x $ are open in $\Omega_x$. If $\{I_{m,n}\}_m$ is a (countable) basis of open sets of $\Gamma^n_x$, then $\{(\pi^x_n)^{-1}(I_{m,n})\}_{m,n}$ is a basis of open sets of $\Omega_x$, and $\mathcal{A}$ is the smallest $\sigma$-algebra generated by these open sets. Note that this system gives an increasing sequence of $\sigma$-algebras
      $$\mathcal{A}_0\subset \mathcal{A}_1\subset \cdots \subset \mathcal{A}_k\subset \cdots \subset \mathcal{A},$$
      where $\mathcal{A}_k$ is the $\sigma$-algebra generated by the sets $\{(\pi^x_k)^{-1}(I_{m,k})\}_{m}$.
      
      Let $\mu_k$ be the Lebesgue measure on $\Gamma^k_x$. This is well-defined since lengths were specified for each edge of the graph $\Gamma^k_x$. There is a natural Borel probability measure $\bar\mu$ on $\Omega_x$ which is compatible with the collection of measures $\{\mu_k\}$ and projection maps $\pi_k^x$. Indeed, since the Borel $\sigma$-algebra $\mathcal{A}$ was generated by lifts of open sets in $\Gamma^k_x$ for all $k$, then it suffices to assign a measure to each lift of each element of a basis $\{I_{m,k}\}$. Thus the measure $\bar\mu$ is defined by the condition that $\pi_{k*}^x\bar\mu = \mu_k$ for all $k$.

      There is a natural $\mathbb{R}$-action on $\Omega_x$ defined as follows. Supposing for a moment that $z = (z_0,z_1,\dots )\in\Omega_x$ has the property that $[\pi^x_0(z_0)]$ is in the interior of some edge $e_v\in \mathbb{E}^0_x$, then for all $t\in\mathbb{R}$ small enough in absolute value, the translation of $z_0$ by $t$, $z_0+t\in e_v$, stays in the interior of the same edge of $\Gamma_x^0$ as $z_0$. Thus, the only way that the translation of $z_0$ by $t$ can be extended to all coordinates of $z$ is by defining
      \begin{equation}
        \label{eqn:translation}
        \varphi_t(z) = \left(z_0+t, z_1+\lambda_\mu^{-1}t, z_2 + \lambda_\mu^{-2}t,\dots \right),
      \end{equation}
      that is, by having $\pi_k^x(\varphi_t(z)) = \pi_k^x(z) + \lambda_\mu^{-k}t$ for all $k$.

      To show that this is well-defined, for $k\geq 0$, let $d_k(z) = \mathrm{dist}\left(\pi_k^x(z),\partial e_k(z)\right)$, where $e_k(z)\in\mathbb{E}^k_x$ denotes the edge in $\Gamma_k^x$ where $[\pi_k^x(z)]$ lies. That is, it is the distance from $z_k$ to the boundary of the edge wherein $z_k$ resides. Suppose that $[\pi_k^x(z)]$ is in the interior of an edge but for some $|t|<d_0(z)$, keeping $|s|<|t|$, $\lim_{s\rightarrow t}\pi_k^x(z)+ \lambda_\mu^{-k}s$ is a vertex of $\Gamma_k^x$. Then since $\gamma_n^x$ is a cellular map for all $n$,
      $$\lim_{s\rightarrow t} z_0+t = \lim_{s\rightarrow t}\gamma_1\circ\cdots \circ \gamma_k(z_k + \lambda^{-k}_\mu s)$$
      is a vertex of $\Gamma_0^x$, which contradicts that $d_0(z)>|t|$. Thus the translation of $z$ as defined in (\ref{eqn:translation}) is well defined at least when $d_0(z)>0$ for $|t|$ small enough.

      It remains to show how to define $\varphi_t(z)$ whenever $\pi_0^x(z)$ is a vertex of $\Gamma_x^0$. Suppose for the moment that $z_0 = \partial^+e_v$ for some $e_v\in\mathbb{E}^0_x$ and that there is some $k\in\mathbb{N}$ such that $\pi_k^x(z)$ is not a vertex of $\Gamma_x^k$. This means that there is a $k\in\mathbb{N}$ and $v\in V_k$ such that $\pi_k^x(z)$ is in the interior of an edge $e_v\in \mathbb{E}^k_x$. As such, $\pi_k^x(z)+\lambda^{-k}_\mu t$ is defined for all $|t|$ small enough, and this can be carried through the maps $\gamma_i^x$ to define $\pi_i^x(\varphi_t(z)) = \gamma_i\circ\cdots \circ \gamma_k(\pi_k^x(z)+\lambda^{-k}_\mu t)$ for all $0\leq i < k$. The same argument allows one to define $\varphi_t(z)$ whenever $z_0 = \partial^-e_v$ for some $e_v\in\mathbb{E}^k_x$ and $v\in V_k$ such that $\pi_k^x(z)$ is in the interior of an edge for $|t|$ small enough.

      Note that in the case that $z_0 = [\partial^+e_v]$ for some $e_v\in\mathbb{E}^0_x$ but that there is some $k\in\mathbb{N}$ such that $\pi_k^x(z)$ is not a vertex of $\Gamma_x^k$, the point $\pi_k^x(z)$ corresponds to a path $p\in E_v$, where $v\in V_k$ is the vertex corresponding to the edge $e_v\in \mathbb{E}^k_x$ containing $\pi_k^x(z)$. By virtue of being in the interior of $e_v$, the path $p\in E_v$ is neither maximal nor minimal.

      So now consider the case where $z_0 = [\partial^+e_v]$ for some $e_v\in\mathbb{E}^0_x$ and that there is no $k\in\mathbb{N}$ such that $\pi_k^x(z)$ is in the interior of an edge of $\Gamma_x^k$. This is only possible if $z$ comes from either the all min path $x^-$ or the all max path $x^+$. In that case, $\varphi_t(z)$ for small $t>0$ is given by
      $$\varphi_t(z) = \left(z'_0+t, z'_1+\lambda_\mu^{-1}t, z'_2 + \lambda_\mu^{-2}t,\dots \right),$$
      where $z'_k = [\partial^-e_v]$ for the unique $v\in V_k$ such that $s(x^-_k) = v$. $\varphi_t(z)$ for $t<0$ small enough is similarly defined by the uniqueness of the all max and all min paths $x^\pm$.
      
      Since $\Omega_x$ comes with an $\mathbb{R}$ action, for all $z$ and $s\in\mathbb{R}$ small enough, $\varphi_s(z)\rightarrow z$ as $s\rightarrow 0$. As such, a neighborhood of $z$ includes an interval isometric to $(-\varepsilon, \varepsilon)$ for $\varepsilon>0$ small enough. Moreover, since $\Omega_x$ inherits the (infinite) product topology, another form of convergence $z^k\rightarrow z$ takes place if $z^k_i = z_i$ for all $i\leq k$. That is, if $z^k$ and $z$ agree on many coordinates, then they are close. For $z\in\Omega_x$ and $k\in\mathbb{N}_0$, define the transverse cylinder sets as
      \begin{equation}
        \label{eqn:cylinder}
        C^\perp_k(z):= \left\{ z'\in\Omega_k: z'_i = z_i\mbox{ for all } i\leq k \right\} = (\pi^x_{k})^{-1}(z_k).
      \end{equation}
      Since $B$ is strongly minimal, given any $k\in \mathbb{N}$, there is a sequence $k_i\rightarrow\infty$ with $k_0 = k$ such that for any $v\in V_{k_i}$ and $w\in V_{k_{i+1}}$ there are at least two paths from $v$ to $w$. From this it follows that $C^\perp_k(z)$ is a Cantor set for any $k$ and $z$, and thus that the local product structure of $\Omega_x$ is that of a product of an interval and a Cantor set. Indeed, 

      The natural Borel probability measure $\bar\mu$ on $\Omega_x$ is invariant under the natural $\mathbb{R}$ action. This can be seen as follows. Let $z\in\Omega_x$ with $z_k = \pi^x_k(z)$ not a vertex in $\Gamma^k_x$, and denote by $\varepsilon_{z_k}$ the distance from $z_k$ to the closest vertex. Then for all $|s|<\varepsilon_{z_k}$ the translation of $z_k$ by $s$ on the edge wherein it resides remains on the same edge and is not a vertex. Let $\varepsilon<\varepsilon_{z_k}$. Then the $\varphi$-invariance of the measure $\mu$ follows from the observations that,
      \begin{equation}
        \label{eqn:measPres}
        \begin{split}
        \mu((\pi^x_k)^{-1}(B_\varepsilon(z_k))) &= \mu_k(B_\varepsilon(z_k)) = \mu_k\left(\varphi_{\lambda_\mu^{-k}s}(B_\varepsilon(z_k))\right) \\
        &= \mu\left( (\pi^x_k)^{-1}(\varphi_{\lambda^{-k}_{\mu}s}(B_\varepsilon(z_k))) \right) = \mu\left( \varphi_s((\pi^x_k)^{-1}(B_\varepsilon(z_k)))\right)
        \end{split}
        \end{equation}
      for $|s|<\varepsilon_{z_k}-\varepsilon$. The first equality holds by definition, the second by the invariance of Lebesgue measure on $\Gamma^k_x$ by translations, the third by definition, and the fourth by the definition of the sets being measured as long as $|s|<\varepsilon_{z_k}-\varepsilon$. By the local product structure of $\Omega_x$, the measure $\mu$ is locally of the form $\mathrm{Leb}\times \nu$, where $\nu$ is a transverse measure in the sense of \cite{BM:UE}.
      
      The solenoid $\Omega_x$ can be given a metric which generates the topology mentioned above. For $\lambda>1$, let
      \begin{equation}
        \label{eqn:fullMetric}
        \bar{d}_\lambda(y,z) = \sum_{k\geq 0}\lambda^{-k}\frac{d_k(y_k,z_k)}{\mathrm{diam}(\Gamma_x^k)},
      \end{equation}
      where $\mathrm{diam}(\Gamma_x^k)$ is the diameter of the graph $\Gamma_x^k$ and $d_k(y_k,z_k)$ is the distance between $y_k,z_k\in \Gamma_x^k$, which is well-defined by construction. Note that this metric assigns a diameter to all transverse cylinder sets of the form (\ref{eqn:cylinder}). In particular, if $y_i = z_i$ for all $i\leq k$ and $y_{k+1}\neq z_{k+1}$, then $d(y,z)\leq \frac{\lambda^{-k}}{\lambda-1}$. In other words,
      \begin{equation}
        \label{eqn:diameter}
        \mathrm{diam}_\lambda(C^\perp_k(z))\leq \frac{\lambda^{-k}}{\lambda-1}.
      \end{equation}
      Define the subset $\mho_x\subset \Omega_x$ by
      $$\mho_x:= \left\{z\in\Omega_x: \gamma_1^x\circ\cdots\circ \gamma_k^x(z_k) = [\partial^\pm e_v]\mbox{ for some }v\in V_0 \mbox{ and all }k\in\mathbb{N} \right\}  = \pi^{-1}_0\left(\mathbb{V}^0_x\right)$$
      and let $\hat{X}_{B}$ be the suspension of $\phi:X_{B}\rightarrow X_{B}$ with roof function $l$ defined in (\ref{eqn:roof}), that is, $l(p) = l_v$ if $s(p) = v$.
      \begin{proposition}
        \label{prop:almostBiLip}
        Let $\mu$ be a proper, minimal ergodic probability measure on $\mathcal{O}_\beth$. For $\mu$-almost every $x = (B,\leq_r)$, the natural $\mathbb{R}$ action on $\Omega_x$ is conjugate to the special flow over $\varphi:X_{B}\rightarrow X_{B}$ with roof function $l$. The conjugacy is given by an map $\varpi_x:\hat{X}_{B}\rightarrow \Omega_x$ with the property that $\varpi_x(X_{B}) = \mho_x$ such that for any $\varepsilon\in(0,1)$ there is a $K = K(\varepsilon,\mu, B)$ such that
        $$K d_{\lambda_\mu}(p,q)^{1+\varepsilon}\leq \bar{d}_{\lambda_\mu}(\varpi(p),\varpi(q)) \leq \frac{d_{\lambda_\mu}(p,q)}{\lambda_\mu-1}$$
        for any $p,q\in X_B$.
      \end{proposition}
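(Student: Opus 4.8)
The plan is to construct $\varpi_x$ by hand from the order combinatorics, check it conjugates the two flows, and then read the two-sided estimate off the geometry of the approximants $\Gamma^k_x$. Points of $\hat X_B$ are pairs $(p,t)$ with $p\in X_B$ and $0\le t\le l(p)$, subject to $(p,l(p))\sim(\phi(p),0)$. I would set $\varpi_x(p,t)=(z_0,z_1,\dots)$, where $z_0$ is the point of $e_{s(p_1)}\in\mathbb E^0_x$ at distance $t$ from $\partial^- e_{s(p_1)}$ and, for $k\ge1$, $z_k$ is the point of $e_{r(p_k)}\in\mathbb E^k_x$ at distance $\lambda_\mu^{-k}\big(t+N_k(p)\big)$ from $\partial^- e_{r(p_k)}$, where $N_k(p):=\sum_{j=1}^{k}\ \sum_{e\in r^{-1}(r(p_j)):\,e<_r p_j}(\mathcal A_{(j-1)}l)_{s(e)}$. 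The defining formula for the maps $\gamma^x_k$ gives $\gamma^x_k(z_k)=z_{k-1}$ at once, so $\varpi_x(p,t)\in\Omega_x$; a telescoping computation of $N_k$ along the maximal path shows the prescribed distance always lies in $[0,|e_{r(p_k)}|]$ and that $\varpi_x(p,l(p))=\varpi_x(\phi(p),0)$, precisely because the successor relation is what defines the gluing $\sim$ of $\Gamma^k_x$. Continuity is then clear, and since $\pi^x_k(\varphi_s(\varpi_x(p,t)))=\pi^x_k(\varpi_x(p,t))+\lambda_\mu^{-k}s$ by inspection, $\varpi_x$ intertwines the suspension flow with the $\mathbb R$-action (\ref{eqn:translation}).

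I would then show $\varpi_x$ is a homeomorphism with $\varpi_x(X_B)=\mho_x$. When $t=0$ the formula gives $z_0=\partial^- e_{s(p_1)}\in\mathbb V^0_x$, so $\varpi_x(X_B)\subseteq\mho_x$; conversely, if $z\in\mho_x=(\pi^x_0)^{-1}(\mathbb V^0_x)$, then every $z_k$ is forced to lie at an endpoint of a sub-edge of its edge (otherwise $\gamma^x_1\circ\cdots\circ\gamma^x_k(z_k)$ would not be a vertex of $\Gamma^0_x$), and this combinatorial data reconstructs a $p\in X_B$ with $\varpi_x(p,0)=z$ modulo the suspension identification; hence $\varpi_x(X_B)=\mho_x$. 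Injectivity of $\varpi_x$ on $X_B$ is the statement that distinct paths have images differing in some coordinate, which is established in the lower bound below; injectivity on $\hat X_B$ then follows by flow-equivariance. Since $\hat X_B$ is compact and $\Omega_x$ is Hausdorff, $\varpi_x$ is a homeomorphism onto its image, and it is onto because every point of $\Omega_x$ is moved into $\mho_x$ by the $\mathbb R$-action in time at most $\max_v l_v$. This gives the conjugacy.

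For the estimate, fix $p\ne q$ and put $n:=k(p,q)-1$, so $p|_n=q|_n$ and $d_{\lambda_\mu}(p,q)=\lambda_\mu^{-n}$. Since (for $t=0$) the coordinate $z_k$ depends only on $p|_k$, the images $\varpi_x(p)$ and $\varpi_x(q)$ agree through coordinate $n$, so $\varpi_x(q)\in C^\perp_n(\varpi_x(p))$ and (\ref{eqn:diameter}) yields the upper bound $\bar d_{\lambda_\mu}(\varpi_x(p),\varpi_x(q))\le\lambda_\mu^{-n}/(\lambda_\mu-1)=d_{\lambda_\mu}(p,q)/(\lambda_\mu-1)$. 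For the lower bound, consider the first level $m$ at which $z^{(p)}_m\ne z^{(q)}_m$. If $r(p_m)=r(q_m)$, the two images lie on the common edge $e_{r(p_m)}$, and since the contributions to $N_m$ from levels below $m$ cancel for $p$ and $q$, their distance is the difference of the level-$m$ contributions, hence at least $\lambda_\mu^{-m}(\mathcal A_{(m-1)}l)_{\min}$. If $r(p_m)\ne r(q_m)$, the images lie on distinct edges, so their distance is at least the distance from one of them to the nearest vertex of $\Gamma^m_x$, which the sub-edge count again shows is a fixed multiple of $\lambda_\mu^{-m}(\mathcal A_{(m-1)}l)_{\min}$ unless both images sit near one common vertex of $\Gamma^m_x$; the latter forces $p|_{m-1}$ and $q|_{m-1}$ to be the unique minimal (respectively maximal) paths to their endpoints with $p_m,q_m$ extremal edges, and the uniqueness of $x^\pm$ then makes this impossible at level $m+1$. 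Since $p|_n=q|_n$, one concludes there is a level $m\le n+2$ with $\bar d_{\lambda_\mu}(\varpi_x(p),\varpi_x(q))\ge c\,\lambda_\mu^{-2m}(\mathcal A_{(m-1)}l)_{\min}/\mathrm{diam}(\Gamma^m_x)$ for a constant $c>0$. Inserting $\mathrm{diam}(\Gamma^m_x)\le\lambda_\mu^{-m}|V_m|\,(\mathcal A_{(m)}l)_{\max}$, the Kesten-Furstenberg estimates $(\mathcal A_{(k)}l)_{\min}\ge c'_\varepsilon\lambda_\mu^{(1-\varepsilon)k}$ and $(\mathcal A_{(k)}l)_{\max}\le C'_\varepsilon\lambda_\mu^{(1+\varepsilon)k}$ (available since $x$ is Kesten-Furstenberg-regular), Condition \ref{cond:1} on $|V_m|$, and $m\le n+2$ then gives $\bar d_{\lambda_\mu}(\varpi_x(p),\varpi_x(q))\ge K\lambda_\mu^{-n(1+\varepsilon)}=K\,d_{\lambda_\mu}(p,q)^{1+\varepsilon}$ with $K=K(\varepsilon,\mu,B)>0$, once the sub-exponential errors are absorbed into $\varepsilon$.

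The hard part is this last step. Controlling $d_m(z^{(p)}_m,z^{(q)}_m)$ at the first level where the two images separate requires distinguishing the geometrically generic situation from the one in which both images cluster near a single vertex of $\Gamma^m_x$, showing --- via the uniqueness of the extremal paths $x^\pm$ provided by properness --- that this degeneracy can persist for at most one further level, and then checking that the resulting bounded shift in level, together with the Oseledets and Condition \ref{cond:1} control of $\mathrm{diam}(\Gamma^k_x)$, costs only an $\varepsilon$ in the H\"older exponent; this is where the loss of regularity in the theorem ultimately originates.
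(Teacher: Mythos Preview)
Your construction of $\varpi_x$ is the paper's argument written in coordinates: the position $\lambda_\mu^{-k}(t+N_k(p))$ on $e_{r(p_k)}$ is exactly the order-preserving bijection between $E_v$ and $(\gamma^x_{0,k})^{-1}(\mathbb V^0_x)\cap[e_v\setminus\partial^+e_v]$ that the paper sets up, and your checks of $\gamma^x_k(z_k)=z_{k-1}$, of the flow intertwining, and of $\varpi_x(X_B)=\mho_x$ follow the same route. The upper bound is also identical --- both arguments reduce it to the diameter estimate (\ref{eqn:diameter}).

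The substantive difference is in the lower bound. The paper, after an index slip, simply declares that at the first level $k$ of path disagreement the images $\varpi(p)_k$, $\varpi(q)_k$ are distinct points of $(\gamma^x_{k})^{-1}(\mathbb V^{k-1}_x)$ and then bounds the minimal gap in that set by $\lambda_\mu^{-k}\min_v(\mathcal A_{(k-1)}l)_v$, which feeds directly into the Kesten--Furstenberg and diameter estimates. You go further and do a case analysis on whether $r(p_m)=r(q_m)$, correctly isolating the degenerate possibility that both images sit at (or near) one common vertex of $\Gamma^m_x$. But your resolution of that case has a gap: you claim that ``uniqueness of $x^\pm$'' forces the degeneracy to break at level $m+1$, hence $m\le n+2$. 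Properness only gives a unique \emph{infinite} all-minimal path; for every $v\in V_j$ there is still a finite minimal path to $v$, and nothing in your argument rules out that $p$ and $q$ both follow minimal edges for many levels beyond $n$, with $[\partial^-e_{r(p_j)}]=[\partial^-e_{r(q_j)}]$ in $\Gamma^j_x$ throughout. Uniqueness of $x^-$ guarantees this cannot persist forever (else $p=q=x^-$), but it does not give the one-step bound you assert, so $m\le n+2$ is not established as written. The paper's shortcut sidesteps this by not engaging with the case at all; your more careful reading exposes a genuine subtlety, but the closure you propose needs a sharper argument (for instance, an actual bound on how many $\partial^-e_v$ can be identified at a single vertex of $\Gamma^j_x$, or a direct estimate showing the contribution at level $m$ still dominates $\lambda_\mu^{-(1+\varepsilon)n}$ regardless of $m-n$).
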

      \begin{proof}
        First, note that the maps $\{\gamma_{k}^x\}$ can be composed yielding the family of maps
        $$\gamma_{m,n}^x:=\gamma^x_{m+1}\circ \cdots \circ \gamma^x_n:\Gamma^k_n\rightarrow \Gamma^k_{m}$$
        for $m<n$. Now a function $\varpi_x:X_{B}\rightarrow \mho_x\subset \Omega_x$ will first be defined and then shown to satisfy the conjugacy property desired.
        
        By construction, the map $\gamma_{m,n}^x$ takes an edge $e'\in \mathbb{E}^n_x$, stretches it by $\lambda_\mu^{n-m}$, and wraps it around edges of $\Gamma^m_x$ according to the order of the set of paths starting at $V_m$ and terminating in the vertex in $V_n$ corresponding to the edge $e'$ in $\Gamma^n_x$. As such, there is an order-preserving bijection between the set
        $$(\gamma_{0,k}^x)^{-1}\left( \mathbb{V}^0_x\right)\cap [e_v\setminus \partial^+e_v]\subset \Gamma_x^k$$
        and the set of paths $E_v$, where the order on $[e_v\setminus \partial^+e_v]$ is inherited from the orientation of the edge. That is, for $p = p_1p_2\cdots\in X_B$ and each $k\in\mathbb{N}$, the order $\leq_r$ gives the finite set of paths $E_{r(p_{k})}$ which start at $V_0$ and end at $r(p_{k})$, and these paths can be matched, in order through bijection with the set $(\gamma_{0,k}^x)^{-1}\left(\mathbb{V}^0_x\right)\cap [e_{r(p_k)}\setminus \partial^+e_{r(p_k)}]$. 
        
        The function $\varpi:X_B\rightarrow \mho_x$ can now be defined. For $p = p_1p_2\cdots\in X_B$ and $k\in\mathbb{N}$, let $\varpi(p)_k\in (\gamma_{0,k}^x)^{-1}\left(\mathbb{V}^0_x\right) \subset\Gamma^k_x$ be the point determined by the placement of the path $p_1p_2\cdots p_k\in E_{r(p_k)}$ in the order given by $\leq_r$ under the bijection described above. By construction, it satisfies $\gamma^x_k(\varpi(p)_k) = \varpi(p)_{k-1}$ for all $k\in\mathbb{N}$.

        The surjectivity of $\varpi$ is straight-forward: by the discussion above about the order-preserving bijections between $E_v$ and $ (\gamma_{0,k}^x)^{-1}\left(\mathbb{V}^0_x\right)\cap e_v\setminus [\partial^+ e_v]$, for any $z_k\in (\gamma_{0,k}^x)^{-1}\left(\mathbb{V}^0_x\right)$ there is a path $p\in X_B$ such that $\varpi(p)_k = z_k$. Suppose that $p,q\in X_B$ satisfy $\varpi(p) = \varpi(q)$. Then $\varpi(p)_k=\varpi(q)_k$ for all $k$ and, since the coordinates $\varpi(p)_k$ and $\varpi(q)_k$ are determined by the position of a preimage in $(\gamma_{0,k}^x)^{-1}\left(\mathbb{V}^0_x\right)$ for each $k$, it follows that the finite paths $p_1p_2\cdots p_k$ and $q_1q_2\cdots q_k$ are the same for all $k$, i.e. $p=q\in X_B$ and $\varpi$ is injective, and thus a bijection. The proof of injectivity also implies that $\varpi$ is continuous. Now consider a finite path $p'=p_1p_2\cdots p_k \in E_{0k}$ and the corresponding cylinder set $C_{p'}\in X_B$. A point $z_k(p')\in (\gamma_{0,k}^x)^{-1}\left(\mathbb{V}^0_x\right)\cap e_{r(p_k)}\setminus [\partial^+ e_{r(p_k)}]$ is uniquely defined by this path, and thus the image of the cylinder set $C_{p'}$ under $\varpi$ is the clopen set $C_k^\perp(z)$ for any $z\in \varpi(C_{p'})$. It follows that $\varpi$ is a homeomorphism. It remains to show that this map conjugates the actions as desired.

        Let $p\in X_B$ with $s(p) = v\in V_0$ and $p\neq x_{max}$. Then $\varphi_{l_v}((p,0)) = (p,l_v)$ is equivalent to $(\phi(p),0)$ in $\hat{X}_B$. On the $\mho_x\subset \Omega_x$ side, $\varpi((p,0))_0$ is a vertex in $\Gamma^0_x$. Let $k(p)$ be the smallest index so that $p_k$ is not a maximal edge. By construction, $\partial^+e_{v}\sim \partial^-e_{s(p')}$, where $p'$ is the unique path from $V_0$ to $V_k$ with $r(p') = r(p_k)$ and $p'$ is the successor of $p_1p_2\cdots p_k$ in $E_{r(p_k)}$. Thus, by (\ref{eqn:translation}), if $s\in (0,l_v)$ then $\varphi_{s}(\varpi((p,0)))_0 = \varpi((p,0))+s \in e_v\setminus \partial^\pm e_v$ and 
        \begin{equation*}
          \begin{split}
            \varphi_{l_{v}}(\varpi((p,0)))_0 &= \varpi((p,0)) + l_{v} = \partial^+e_{v}\sim \partial^-e_{s(p')} = \varpi((\phi(p),0))_0 = \varpi((p,l_{v}))\\
            &= \varpi(\varphi_{l_{v}}((p,0)))_0.
          \end{split}
        \end{equation*}
        If $p = x_{max}$ then a similar calculation shows the conjugacy between the flows.

        Now let $p,q\in X_B$ with $q\in C_k(p)\setminus C_{k+1}(p)$, that is, with $q_i=p_i$ for $i\leq k$ but $q_{k+1}\neq p_{k+1}$. Then by (\ref{eqn:metricCantor}), $d_{\lambda}(p,q) = \lambda^{-k}$. In addition, by definition, $\varpi(q)_i = \varpi(p)_i$ for all $i<k$ and $\varpi(q)_k \neq \varpi(p)_k$. Thus, by (\ref{eqn:fullMetric}),
        $$\bar{d}_\lambda(\varpi(p),\varpi(q)) = \sum_{i\geq 0}\lambda^{-i}\frac{d_i(\varpi(p)_i,\varpi(q)_i)}{\mathrm{diam}(\Gamma_x^i)} = \sum_{i\geq k}\lambda^{-i}\frac{d_i(\varpi(p)_i,\varpi(q)_i)}{\mathrm{diam}(\Gamma_x^i)}\leq \frac{\lambda^{-k}}{\lambda-1} = \frac{d_\lambda(p,q)}{\lambda-1}.$$
        To obtain a lower bound for $\bar{d}_\lambda(\varpi(p),\varpi(q))$, in view of (\ref{eqn:fullMetric}), it suffices to give a lower bound for $d_k(\varpi(p)_k,\varpi(q)_k)$. By definition, $d_k(\varpi(p)_k,\varpi(q)_k)$ is an minimum the shortest distance between two points in the preimage of $(\gamma^x_{k})^{-1}(\mathbb{V}^{k-1}_x)$. By construction, this is exactly
        $$\min_{v\in V_{k-1}} \lambda_\mu^{-1} \lambda_\mu^{-k+1}(\mathcal{A}_{(k-1)}l)_v.$$
        By standing assumptions, for all $\varepsilon\in(0,1)$ there is a $C_{\varepsilon,B}>1$ such that
        $$C_{\varepsilon,B}^{-1}\lambda_\mu^{(1-\varepsilon)k} \leq (\mathcal{A}_{(k)}l)_v\leq C_{\varepsilon,B} \lambda^{(1+\varepsilon)k}_\mu$$
        for all $k\in\mathbb{N}$ and $v\in V_k$, and thus $C_{\varepsilon,B}^{-1}\lambda_\mu^{-1} \lambda_\mu^{-\varepsilon(k-1)} = K_{\varepsilon,\mu,B} \lambda^{-\varepsilon k}\leq d_k(\varpi(p)_k,\varpi(q)_k)$. Likewise, by standing assumptions, for all $\varepsilon\in (0,1)$ there is a $C'_{\varepsilon,B}$ such that
        \begin{equation}
          \label{eqn:diameter2}
          (C_{\varepsilon,B}')^{-1}\lambda_\mu^{-\varepsilon k} \leq \mathrm{diam}(\Gamma^k_x)\leq C'_{\varepsilon,B} \lambda^{\varepsilon k}_\mu.
        \end{equation}
        As such, for any $\varepsilon\in(0,1)$ there is a $K'_{\varepsilon, B, \mu}$ such that
        $$\frac{d_k(\varpi(p)_k,\varpi(q)_k)}{\mathrm{diam}(\Gamma^k_x)}\geq K'_{\varepsilon,\mu, B}\lambda_\mu^{-\varepsilon k} = K'_{\varepsilon,\mu, B}d_{\lambda_\mu}(p,q)^\varepsilon$$
        for all $k\in \mathbb{N}$. Putting this together with the upper bound above,
        $$K'_{\varepsilon,\mu, B}d_{\lambda_\mu}(p,q)^{1+\varepsilon}\leq \bar{d}_{\lambda_\mu}(\varpi(p),\varpi(q)) \leq \frac{d_{\lambda_\mu}(p,q)}{\lambda-1}.$$
      \end{proof}

      Let $\mu_x$ be the unique, $\mathbb{R}$-invariant ergodic probability measure on $\Omega_x$. By the local product structure of $\Omega_x$, the measure is locally of the form $\mathrm{Leb}\times \nu$, where $\nu$ is a transverse invariant measure in the sense of \cite{BM:UE}. Denote by $\nu_{k,z}$ the transverse invariant measure $\nu$ restricted to $C^\perp_k(z)$. For any $k\in \mathbb{N}_0$ and $z\in \Omega_x$, set
      $$\hat{\nu}_{k,z}:= \nu(C^\perp_k(z)) = \int_{C^\perp_k(z)} \, d\nu_{k,z}.$$
      Note that by invariance, for all $|s|$ small enough:
      $$\hat{\nu}_{k,z} = \nu(C^\perp_k(z)) = \nu(\varphi_{-s}(C^\perp_k(\varphi_s(z)))) = (\varphi_{-s})_*\nu(C^\perp_k(\varphi_s(z))) = \hat\nu_{k,\varphi_s(z)}.$$
	Although this will not be used, under the standing assumptions, it can be shown that for all $\varepsilon>0$ there is a $C_\varepsilon>1$ such that for all $k\in\mathbb{N}_0$ and $z\in\Omega_x$:
        $$C_\varepsilon^{-1}\lambda_\mu^{-k-\varepsilon} \leq \hat\nu_{k,z} \leq C_\varepsilon \lambda_\mu^{-k+\varepsilon}.$$

      
      \subsection{Function spaces}
      Let $\mathcal{A}$ be the Borel $\sigma$-algebra of $\Omega_x$, and for any $k\geq 0$ let $\mathcal{A}_k\subset \mathcal{A}$ be $\sigma$-algebras generated by sets of the form $(\pi^x_k)^{-1}(A)$ for open balls $A\subset \Gamma_x^k$. For any measurable $f:\Omega_x\rightarrow \mathbb{R}$, let $\Pi_k f:\Omega_x\rightarrow \mathbb{R}$ be the conditional expectation of $f$ on $\mathcal{A}_k$. This is expressed explicitly as
      \begin{equation}
        \label{eqn:Pi}
        \Pi_kf(z) = \hat{\nu}^{-1}_{k,z}\int_{C_k^\perp(z)} f(y)\, d\nu_{k,z}(y).
      \end{equation}
      The function $\Pi_kf$ is transversally-locally constant: if $y,z$ agree on the first $n>k$ coordinates, $C_k^\perp(x) = C_k^\perp(y)$ and $\Pi_kf(z) = \Pi_kf(y)$. In particular, $\Pi_kf(z)$ depends only on the first $k+1$ coordinates $z_0,z_1,\dots, z_k$.

      Let $\delta_kf:= \Pi_kf - \Pi_{k-1}f$. Then
      $$\Pi_kf = \sum_{i=0}^k \delta_i f \hspace{.6in} \mbox{ and so } \hspace{.6in}\lim_{k\rightarrow \infty}  \Pi_k f= \lim_{k\rightarrow \infty} \sum_{i=0}^k \delta_i f  = f$$
      in $L^1$ by the increasing martingale theorem. Thus, any measurable funtion $f$ has the unique decomposition
      \begin{equation}
        \label{eqn:expansion}
        f = \sum_{k\geq 0} \delta_k f.
      \end{equation}
      Each function $\delta_k f$ depends only on the coordinates $z_k$. As such, there is a function $g_k:\Gamma_x^k\rightarrow \mathbb{R}$ such that $\delta_kf(z) = \pi_k^{*}g_k(z)=g_k(z_k)$. Denote this function by $\pi_{k*}^x\delta_k f$. 

      Since $\Omega_x$ carries a natural $\mathbb{R}$ action, let $X$ be the generating vector field of this action. For a function $f:\Omega_x\rightarrow \mathbb{R}$, define the function $Xf$ to be the function with value at $z\in \Omega_x$:
      $$Xf(z) := \lim_{s\rightarrow 0}\frac{f\circ\varphi_s(z) - f(z)}{s}$$
      if the limit exists. Let $C^1(\Omega_x)$ be the space of functions such that $Xf$ exists and is continuous. The spaces $C^r(\Omega_x)$ are defined recursively for $r>1$.
      
      Let $C^r(\Gamma_x^k)$ and $C^\infty(\Gamma_x^k)$ be the spaces of the corresponding regularity, and define
      $$C^{\infty}_{tlc}(\Omega_x) := \bigcup_{k\geq 0}\pi^{x*}_k C^\infty\left(\Gamma_x^k\right).$$
      For $r\geq 0$, $\alpha>0$, and $\varepsilon\in(0,\alpha)$ define the norms on $C^{\infty}_{tlc}(\Omega)$ as
      \begin{equation}
        \label{eqn:norm}
        \|f\|_{r,\alpha} = \sum_{k\geq 0} \lambda_\mu^{\alpha k} \|\pi_{k*}^x\delta_k f\|_{C^r(\Gamma_x^k)}\hspace{.25in}\mbox{ and }\hspace{.25in}\vertiii{f}_{r,\alpha,\varepsilon} = \sup_{k\geq 0} \lambda_\mu^{k(\alpha-\varepsilon)} \|\pi_{k*}^x\delta_k f\|_{C^r(\Gamma_x^k)}.
      \end{equation}
      Both of these norms are finite for any $f\in C^\infty_{tlc}(\Omega)$ since $\pi_{k*}^x\delta_k f = 0$ for all $k$ large enough. Since any measurable function can be written as in (\ref{eqn:expansion}), the first norm in (\ref{eqn:norm}) captures the decay rate of each of the components of the function as the components go through all the complexes $\Gamma_x^k$ used in the inverse limit.

      Define
      $$\mathcal{S}_\alpha^r(\Omega_x) := \overline{C^\infty_{tlc}(\Omega_x)}^{\|\cdot\|_{r,\alpha}}\hspace{.6in}\mbox{ and }\hspace{.6in} \mathcal{R}_{\alpha,\varepsilon}^r(\Omega_x) := \overline{C^\infty_{tlc}(\Omega_x)}^{\vertiii{\cdot}_{r,\alpha,\varepsilon}}$$
      to be the respective completions of $C^\infty_{tlc}(\Omega_x)$ under the norms in (\ref{eqn:norm}). 

      \begin{proposition}
        \label{eqn:spaceProps}
        For $r\in\mathbb{N}$ and $\alpha>1$:
        \begin{enumerate}
        \item For any $k\in\mathbb{N}$, $[X,\delta_k] = 0$ on $C^1(\Omega_x)$
        \item(Boundedness of differential operator)  The leafwise differential operator $X:\mathcal{S}_\alpha^r(\Omega_x)\rightarrow \mathcal{S}_{\alpha+1}^{r-1}(\Omega_x)$ satisfies $\|Xf\|_{r-1,\alpha+1}\leq \|f\|_{r,\alpha} $
        \item(Poincar\'e-ish inequality) For all $\varepsilon>0$ there is a $C_{\mu,\varepsilon}$ such that for $r\geq 0$ and $f\in \mathcal{S}^{r+1}_\alpha(\Omega_x)$,
          $$\|f-\mu_x(f)\|_{r,\alpha}\leq C_{\mu,\varepsilon} \|Xf\|_{r,\alpha+1+\varepsilon}.$$
        \item For all $\varepsilon\in(0,\alpha)$ there is a continuous embedding $i:\mathcal{S}^r_\alpha(\Omega_x)\rightarrow \mathcal{R}_{\alpha,\varepsilon}^r(\Omega_x)$ satisfying $\vertiii{i(f)}_{r,\alpha,\varepsilon}\leq \|f\|_{r,\alpha}.$
        \end{enumerate}
      \end{proposition}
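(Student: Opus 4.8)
The plan is to handle (i), (ii) and (iv) first, since these are essentially formal, and to concentrate on (iii).

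For (i) I would reduce $[X,\delta_k]=0$ on $C^1(\Omega_x)$ to $[X,\Pi_k]=0$, as $\delta_k=\Pi_k-\Pi_{k-1}$. For $|t|$ small the flow carries the transverse fibre $C^\perp_k(z)$ rigidly onto $C^\perp_k(\varphi_t(z))$ and, as noted above, $\hat\nu_{k,\varphi_t(z)}=\hat\nu_{k,z}$; hence, by (\ref{eqn:Pi}),
\[
\Pi_k f(\varphi_t(z))=\hat\nu_{k,z}^{-1}\int_{C^\perp_k(z)}f(\varphi_t(y))\,d\nu_{k,z}(y)=\Pi_k(f\circ\varphi_t)(z).
\]
Since $f\in C^1(\Omega_x)$, the difference quotients $s^{-1}(f\circ\varphi_s-f)$ are uniformly bounded on the compact space $\Omega_x$, so differentiating under the integral at $t=0$ (dominated convergence) gives $X(\Pi_k f)=\Pi_k(Xf)$; the right-hand side is continuous, being the transverse average of a continuous function, so $\Pi_k f\in C^1(\Omega_x)$ and $[X,\Pi_k]=0$, whence $[X,\delta_k]=0$. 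Iterating, $\Pi_k$ (and so each $\delta_k$) maps $C^r(\Omega_x)$ boundedly into itself.

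For (ii), write $\delta_k f=\pi_k^{x*}g_k$ with $g_k:=\pi_{k*}^x\delta_k f\in C^r(\Gamma_x^k)$. From $\pi_k^x(\varphi_t(z))=\pi_k^x(z)+\lambda_\mu^{-k}t$ and the chain rule, $X\delta_k f=\pi_k^{x*}(\lambda_\mu^{-k}g_k')$, where $g_k'$ is the leafwise derivative on $\Gamma_x^k$; with (i) this reads $\pi_{k*}^x\delta_k(Xf)=\lambda_\mu^{-k}g_k'$. Using $\|g_k'\|_{C^{r-1}(\Gamma_x^k)}\le\|g_k\|_{C^r(\Gamma_x^k)}$ one gets, for $f\in C^\infty_{tlc}(\Omega_x)$,
\[
\|Xf\|_{r-1,\alpha+1}=\sum_{k\ge 0}\lambda_\mu^{(\alpha+1)k}\big\|\lambda_\mu^{-k}g_k'\big\|_{C^{r-1}(\Gamma_x^k)}=\sum_{k\ge 0}\lambda_\mu^{\alpha k}\|g_k'\|_{C^{r-1}(\Gamma_x^k)}\le\|f\|_{r,\alpha},
\]
so the $\lambda_\mu^{-k}$ produced by the chain rule exactly absorbs the index shift $\alpha\mapsto\alpha+1$; this inequality then passes to the completion $\mathcal{S}_\alpha^r(\Omega_x)$. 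Part (iv) is immediate: on $C^\infty_{tlc}(\Omega_x)$, $\vertiii{f}_{r,\alpha,\varepsilon}=\sup_{k\ge 0}\lambda_\mu^{k(\alpha-\varepsilon)}\|g_k\|_{C^r(\Gamma_x^k)}\le\sum_{k\ge 0}\lambda_\mu^{\alpha k}\|g_k\|_{C^r(\Gamma_x^k)}=\|f\|_{r,\alpha}$ (using $\lambda_\mu^{-\varepsilon k}\le 1$ and $\sup\le\sum$), so the identity on $C^\infty_{tlc}(\Omega_x)$ extends to a norm-$\le 1$ map $i:\mathcal{S}_\alpha^r(\Omega_x)\to\mathcal{R}_{\alpha,\varepsilon}^r(\Omega_x)$; it is injective because convergence in either norm forces each $\delta_k$-component to converge in $C^r(\Gamma_x^k)$, so a sequence that is Cauchy and null in one completion is null in the other.

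The content is in (iii). Given $f\in\mathcal{S}^{r+1}_\alpha(\Omega_x)$, let $g_k:=\pi_{k*}^x\delta_k f\in C^{r+1}(\Gamma_x^k)$ and $h_k:=\pi_{k*}^x\delta_k(Xf)=\lambda_\mu^{-k}g_k'$ be its level-$k$ components, as in (ii). Since $\Gamma_x^k$ is connected, any two points $p,q$ are joined by a path of length $\le\mathrm{diam}(\Gamma_x^k)$, so $|g_k(p)-g_k(q)|\le\mathrm{diam}(\Gamma_x^k)\,\|g_k'\|_{C^0}$; hence, writing $m_k:=\int_{\Gamma_x^k}g_k\,d\mu_k$, which lies between $\inf g_k$ and $\sup g_k$, we have
\[
\|g_k-m_k\|_{C^r(\Gamma_x^k)}\le\mathrm{diam}(\Gamma_x^k)\,\|g_k'\|_{C^0}+\|g_k'\|_{C^{r-1}(\Gamma_x^k)}\le\big(\mathrm{diam}(\Gamma_x^k)+1\big)\|g_k'\|_{C^r(\Gamma_x^k)}.
\]
Moreover $m_k=\int_{\Omega_x}\delta_k f\,d\mu_x=\mu_x(\delta_k f)$, which vanishes for $k\ge 1$ and equals $\mu_x(f)$ for $k=0$, so $\delta_k(f-\mu_x(f))=\pi_k^{x*}(g_k-m_k)$ for all $k$. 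Inserting $\|g_k'\|_{C^r(\Gamma_x^k)}=\lambda_\mu^k\|h_k\|_{C^r(\Gamma_x^k)}$ and the bound $\mathrm{diam}(\Gamma_x^k)\le C'_{\varepsilon,B}\lambda_\mu^{\varepsilon k}$ from (\ref{eqn:diameter2}) gives a constant $C_{\mu,\varepsilon}$ (also depending on $x$) with $\|g_k-m_k\|_{C^r(\Gamma_x^k)}\le C_{\mu,\varepsilon}\lambda_\mu^{(1+\varepsilon)k}\|h_k\|_{C^r(\Gamma_x^k)}$ for every $k$; summing this term-by-term over $k$ yields
\[
\|f-\mu_x(f)\|_{r,\alpha}=\sum_{k\ge 0}\lambda_\mu^{\alpha k}\|g_k-m_k\|_{C^r(\Gamma_x^k)}\le C_{\mu,\varepsilon}\sum_{k\ge 0}\lambda_\mu^{(\alpha+1+\varepsilon)k}\|h_k\|_{C^r(\Gamma_x^k)}=C_{\mu,\varepsilon}\|Xf\|_{r,\alpha+1+\varepsilon},
\]
which is the assertion.

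I expect (iii) to be the main obstacle. Its only genuinely non-formal ingredient is the uniform oscillation (Poincar\'e-type) bound on the approximant graphs: one needs the level-$k$ component $g_k$ to be a bona fide $C^{r+1}$ function on the \emph{connected} complex $\Gamma_x^k$, so that its oscillation is controlled by the leafwise derivative along paths, together with the subexponential growth $\mathrm{diam}(\Gamma_x^k)\le C'_{\varepsilon,B}\lambda_\mu^{\varepsilon k}$ of (\ref{eqn:diameter2}) — itself a consequence of Condition \ref{cond:1} and Kesten--Furstenberg regularity, and precisely what confines the loss of regularity to an arbitrarily small $\varepsilon$. Everything else is the martingale decomposition (\ref{eqn:expansion}) combined with $[X,\delta_k]=0$ and the scaling $X\circ\pi_k^{x*}=\lambda_\mu^{-k}\,\pi_k^{x*}\circ(\text{leafwise }\partial)$.
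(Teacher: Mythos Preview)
Your proof is correct and follows the paper's strategy throughout: (i) via $[X,\Pi_k]=0$ by differentiating under the transverse integral, (ii) via the chain-rule factor $\lambda_\mu^{-k}$ from (\ref{eqn:translation}), (iv) via the elementary $\sup\le\sum$ bound, and (iii) via the oscillation bound on the connected graph $\Gamma_x^k$ together with the diameter estimate (\ref{eqn:diameter2}). The one place you diverge slightly is in the higher-order terms of (iii): the paper bounds each $\|(\pi_{k*}\delta_k\Theta)^{(j)}\|_\infty$ by $\mathrm{diam}(\Gamma_x^k)\,\|(\pi_{k*}\delta_k\Theta)^{(j+1)}\|_\infty$ separately, whereas you observe directly that for $j\ge 1$ the derivatives $(g_k-m_k)^{(j)}=g_k^{(j)}$ are already dominated by $\|g_k'\|_{C^{r-1}}$ without any diameter factor, and only invoke the diameter for the $C^0$ oscillation---this is a cleaner bookkeeping that avoids the implicit zero-mean assumption the paper's per-derivative Poincar\'e step would need.
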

      \begin{proof}
        First, by (\ref{eqn:Pi}), for $f\in C^1(\Omega_x)$:
        \begin{equation}
          \label{eqn:commutant}
          \begin{split}
            X\Pi_kf(z) &= \lim_{s\rightarrow 0}\frac{\Pi_kf(\varphi_s(z))- \Pi_kf(z)}{s}\\
            &= \lim_{s\rightarrow 0}\frac{\displaystyle\hat{\nu}^{-1}_{k,\varphi_s(z)} \int_{C^\perp_k(\varphi_s(z))} f(y)\, d\nu_{k,\varphi_s(z)}(y) -  \hat{\nu}^{-1}_{k,z} \int_{C^\perp_k(z)} f(y)\, d\nu_{k,z}(y) }{s}\\
            &= \lim_{s\rightarrow 0}\hat{\nu}^{-1}_{k,z} \int_{C^\perp_k(z)} \frac{  f(\varphi_s(y)) -  f(y) }{s} \, d\nu_{k,z}(y)\\
            &= \Pi_k X f(z),
          \end{split}
        \end{equation}
        which proves (i).
        
        For (ii), for $f\in\mathcal{S}^r_{\alpha}$, from (\ref{eqn:translation}) and (\ref{eqn:commutant}),
        \begin{equation}
          \begin{split}
            \delta_k X f(z) = X\delta_k f(z) &= \lim_{s\rightarrow 0} \frac{\delta_kf\circ\varphi_s(z) - \delta_kf(z)}{s} = \lim_{s\rightarrow 0} \frac{\pi_{k*}\delta_kf(z_k+\lambda^{-k}_\mu s) - \pi_{k*}\delta_kf(z_k)}{s}\\
            &= \lambda_\mu^{-k} (\pi_{k*}\delta_k f)'(z_k),
          \end{split}
        \end{equation}
        and so
        \begin{equation}
          \label{eqn:pass}
          \pi_{k*}\delta_k Xf = \lambda^{-k}_\mu(\pi_{k*}\delta_k f)',
        \end{equation}
        from which it follows that
        $$\| \pi_{k*}\delta_k Xf \|_{C^{r-1}(\Gamma_x^k)} \leq  \lambda^{-k}_\mu\|\pi_{k*}\delta_k f \|_{C^{r}(\Gamma_x^k)},$$
        and thus
        \begin{equation*}
          \begin{split}
            \|Xf\|_{r-1,\alpha+1} = \sum_{k\geq 0 }\lambda_\mu^{(\alpha+1)k} \| \pi_{k*}\delta_k Xf \|_{C^{r-1}(\Gamma_x^k)} \leq \sum_{k\geq 0 }\lambda_\mu^{(\alpha+1)k} \lambda^{-k}_\mu\| \pi_{k*}\delta_k f \|_{C^{r}(\Gamma_x^k)}= \|f\|_{r,\alpha}
          \end{split}
        \end{equation*}
        which proves the second statement.

        To prove the third inequality, a few basic observations are in order. Let $u\in C^1(\Gamma^k_x)$. Then by continuity there exists a $p_u\in \Gamma^k_x$ such that $u(p_u) = \mu_k(u)$, where $\mu_k$ is the Lebesgue measure on $\Gamma^k_x$. Thus for any $y\in \Gamma^k_x$,
        $$u(y) - \mu_k(u) = u(y) - u(p_u) = \int_{p_u}^y u'(q)\, d\mu_k(q)$$
        and so it follow that
                $$\|u-\mu_k(u)\|_\infty \leq \|u'\|_{\infty}\cdot \mathrm{diam}(\Gamma^k_x).$$

        Take $u = \pi_{k*}\delta_k \Theta$ for some $\Theta\in\mathcal{S}^{r+1}_\alpha$. By the bound above and (\ref{eqn:pass}):
        $$\lambda^{k}_\mu\|\pi_{k*}\delta_k X\Theta\|_\infty = \|(\pi_{k*}\delta_k \Theta)'\|_\infty\geq  \|\pi_{k*}\delta_k \Theta - \mu_k(\pi_{k*}\delta_k \Theta)\|_\infty \cdot \mathrm{diam}(\Gamma^k_x)^{-1}.$$
or
$$\|\pi_{k*}\delta_k \Theta - \mu_k(\pi_{k*}\delta_k \Theta)\|_\infty\leq \lambda^{k}_\mu\|\pi_{k*}\delta_k X\Theta\|_\infty \cdot \mathrm{diam}(\Gamma^k_x) .$$   

It should be noted that for $k>0$, $\mu_k(\pi_{k*}\delta_k \Theta) = 0$. Indeed, by construction, $\mu_k(\pi_{k*}\delta_k \Theta)  = \mu(\delta_k\Theta)$, and since the average of $\Theta$ contributes only to $\Pi_0\Theta$, $\delta_k \Theta$ has zero average.

        To apply to higher derivatives, first, by the basic Poincar\'e inequality for $\Gamma^k_x$ derived above,
        $$\|(\pi_{k*}\delta_k \Theta)^{(j)}\|_\infty\leq  \|(\pi_{k*}\delta_k  \Theta)^{(j+1)}\|_{\infty}\cdot \mathrm{diam}(\Gamma^k_x).$$
        Combining this with $(\pi_{k*}\delta_k X \Theta)^{(j)} = \lambda^{-k}_\mu(\pi_{k*}\delta_k \Theta)^{(j+1)}$ for\footnote{Note that this is the place were it is used that $\Theta\in\mathcal{S}^{r+1}_\alpha$ and not merely in $\mathcal{S}^r_\alpha$.} $j<r$, which follows from (\ref{eqn:pass}),  gives
        $$\|(\pi_{k*}\delta_k \Theta)^{(j)}\|_\infty\leq \lambda_\mu^{ k} \|(\pi_{k*}\delta_k X\Theta)^{(j)}\|_\infty \cdot \mathrm{diam}(\Gamma^k_x).$$
        Putting everything together:
        \begin{equation}
          \begin{split}
            \|\Theta  - \mu(\Theta)\|_{r,\alpha} &= \sum_{k\geq 0 }\lambda_\mu^{\alpha k} \|\pi_{k*}\delta_k (\Theta-\mu(\Theta)) \|_{C^r(\Gamma_x^k)} \\
            &\leq \sum_{k\geq 0 }\lambda_\mu^{\alpha k} \lambda_\mu^{k} \|\pi_{k*}\delta_k X \Theta \|_{C^r(\Gamma_x^k)}\cdot \mathrm{diam}(\Gamma^k_x) \\
            &\leq \sum_{k\geq 0 } C_{\varepsilon,x}\cdot \lambda_\mu^{(\alpha+1+\varepsilon) k} \|\pi_{k*}\delta_k X \Theta \|_{C^r(\Gamma_x^k)} \\
            &= C_\varepsilon \|X \Theta\|_{r,\alpha+1+\varepsilon},
          \end{split}
        \end{equation}
        which proves (iii).

        For (iv), let $f\in\mathcal{S}_{\alpha}^r$. Then for any $\varepsilon\in(0,\alpha)$, for all but finitely many indices $k\in\mathbb{N}$,
        $$\|\pi_{k*}\delta_k f\|_{C^r}\leq \lambda_\mu^{-k(\alpha-\varepsilon)}.$$
        Indeed, let $I_\varepsilon\subset\mathbb{N}$ the the set of $k$ such that $\|\pi_{k*}\delta_k f\|_{C^r}> \lambda_\mu^{-k(\alpha-\varepsilon)}$ and suppose that $|I_\varepsilon|$ is not finite. Then
      $$\|f\|_{r,\alpha} = \sum_{k\geq 0}\lambda_\mu^{k\alpha}\|\pi_{k*}\delta_k f\|_{C^r}\geq \sum_{k\in I_\varepsilon}\lambda_\mu^{k\alpha}\|\pi_{k*}\delta_k f\|_{C^r} \geq\sum_{k\in I_\varepsilon}\lambda_\mu^{k\varepsilon} = \infty,$$
        a contradiction. Thus the sup in the definition of $\vertiii{\cdot}_{r,\alpha, \varepsilon}$ is obtained at a specific index, say $k'$. Thus
        $$\vertiii{f}_{r,\alpha,\varepsilon} = \lambda^{k'(\alpha-\varepsilon)}_\mu \|\pi_{k'*}\delta_{k'} f \|_{C^r(\Omega_x)}\leq \sum_{k\geq 0} \lambda^{k\alpha}_\mu \|\pi_{k*}\delta_k f \|_{C^r(\Omega_x)}  = \|f\|_{r,\alpha}.$$
        In particular this shows that the norm of the inclusion operator is bounded by 1, and is actually 1 when $f = 1$. So the embedding is continuous.
      \end{proof}
      \section{The cohomological equation}
      \label{eqn:coh}
      \begin{definition}
        For $r\in\mathbb{R}$, $\alpha>1$ and $\varepsilon\in(0,\alpha-1)$, the $r,\alpha,\varepsilon$ cohomology of $\Omega_x$ is the quotient
        $$H_{r,\alpha,\varepsilon}^1(\Omega_x) :=  \mathcal{S}^r_\alpha(\Omega_x)/X\mathcal{S}^{r+1}_{\alpha-1-\varepsilon}(\Omega_x).$$
      \end{definition}
      The purpose of this section is to relate $H_{r,\alpha,\varepsilon}^1(\Omega_x)$ to $\check H^1(\Omega_x;\mathbb{R})$. To do this, first let
      $$C^r_{tlc}(\Omega_x) := \bigcup_{k\geq 0}\pi^*_k C^r(\Gamma^k)$$
      and define $H^1_{r,tlc}(\Omega_x):= C^r_{tlc}(\Omega_x)/XC^{r}_{tlc}(\Omega_x)$. Finally, let $H_{tlc}^1(\Omega_x):= C^\infty_{tlc}(\Omega_x)/XC^\infty_{tlc}(\Omega_x)$. It follows from \cite{sadun:deRham} that $H^1_{tlc}(\Omega_x) \cong \check H^1(\Omega_x;\mathbb{R})$.
      \begin{theorem}
        \label{thm:r-alpha}
        Let $r\in\mathbb{N}$, $\alpha>1$ and $f\in\mathcal{S}^r_\alpha(\Omega_x)$. For every $\varepsilon\in(0,\alpha-1)$ there exists a $g\in C^r_{tlc}(\Omega_x)$ and $u\in \mathcal{S}^{r+1}_{\alpha-1-\varepsilon}(\Omega_x)$ such that $f-g = Xu$.
      \end{theorem}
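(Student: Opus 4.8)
The plan is to use the martingale decomposition $f = \sum_{k\ge0}\delta_k f$ from \eqref{eqn:expansion} and to build $g$ and $u$ level-by-level, solving the cohomological equation separately on each graph $\Gamma^k_x$. Recall from Proposition \ref{eqn:spaceProps} that each $\delta_k f$ is of the form $\pi^{x*}_k g_k$ for a function $g_k=\pi^x_{k*}\delta_k f\in C^r(\Gamma^k_x)$, and that $X$ acts on this component as $\lambda_\mu^{-k}$ times the ordinary derivative along the (oriented) edges of $\Gamma^k_x$. So the problem decouples: on each $\Gamma^k_x$ I want to write $g_k = c_k + (\pi^x_{k*}\delta_k u)'\cdot\lambda_\mu^{-k}$ for a transversally-locally-constant leftover $c_k$ and a new $C^{r+1}$ function on $\Gamma^k_x$.

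First I would solve the equation on a single graph. Fix $k$ and consider $g_k\in C^r(\Gamma^k_x)$. On a finite metric graph, the obstruction to writing $g_k = h'$ for an edge-wise-differentiable $h$ that is \emph{continuous across vertices} is exactly a finite-dimensional piece: the integrals of $g_k$ around the independent cycles of $\Gamma^k_x$, i.e.\ the class of $g_k$ in $H^1_{dR}(\Gamma^k_x)\cong H^1(\Gamma^k_x;\mathbb{R})\cong \mathbb{R}^{|V_k|-|\mathbb{V}^k_x|+1}$ (or however the first Betti number of $\Gamma^k_x$ comes out of the construction). So I can choose a locally constant representative $c_k$ on $\Gamma^k_x$ of the same de Rham class as $g_k$, set $w_k$ to be a primitive of $g_k-c_k$ on $\Gamma^k_x$ with $w_k\in C^{r+1}(\Gamma^k_x)$, normalized to have zero Lebesgue average so that it genuinely contributes only to level $k$, and then define $\delta_k u := \lambda_\mu^{k}\,\pi^{x*}_k w_k$ and $g := \sum_k \pi^{x*}_k c_k$. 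The factor $\lambda_\mu^{k}$ is dictated by \eqref{eqn:pass}, since $X(\pi^{x*}_k w_k) = \lambda_\mu^{-k}\pi^{x*}_k w_k'$; we need $Xu = \sum_k \pi^{x*}_k(g_k-c_k) = f-g$.

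The main obstacle will be the norm estimate showing $u=\sum_k \delta_k u\in\mathcal{S}^{r+1}_{\alpha-1-\varepsilon}(\Omega_x)$, i.e.\ that $\sum_k \lambda_\mu^{(\alpha-1-\varepsilon)k}\|w_k\|_{C^{r+1}(\Gamma^k_x)}<\infty$ given $\sum_k \lambda_\mu^{\alpha k}\|g_k\|_{C^r(\Gamma^k_x)}<\infty$. Here I would invoke the Poincar\'e-type inequality on $\Gamma^k_x$ as in part (iii) of Proposition \ref{eqn:spaceProps}: since $w_k$ and all its derivatives up to order $r$ are primitives of the corresponding derivatives of $g_k-c_k$, and $w_k$ has zero average, one gets $\|w_k^{(j)}\|_\infty \le \|g_k^{(j)} - c_k^{(j)}\|_\infty\cdot\mathrm{diam}(\Gamma^k_x)$ for $j\le r$, and $\|w_k^{(r+1)}\|_\infty = \|g_k^{(r)}\|_\infty$. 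Combined with the diameter bound $\mathrm{diam}(\Gamma^k_x)\le C'_{\varepsilon,B}\lambda_\mu^{\varepsilon k}$ from \eqref{eqn:diameter2} and a similar polynomial-in-$k$ bound on $\|c_k\|_\infty$ in terms of $\|g_k\|_\infty$ (the de Rham projection on $\Gamma^k_x$ can be taken to have norm growing subexponentially, using Condition \ref{cond:1} to control $|V_k|$ and the cycle structure), this yields $\|w_k\|_{C^{r+1}(\Gamma^k_x)} \le C''_{\varepsilon,B}\,\lambda_\mu^{(1+\varepsilon)k}\|g_k\|_{C^r(\Gamma^k_x)}$, whence $\|u\|_{r+1,\alpha-1-\varepsilon}\le C\|f\|_{r,\alpha}$ after adjusting $\varepsilon$. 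The one subtlety to be careful about is that the locally constant functions $c_k$ on $\Gamma^k_x$ do \emph{not} pull back consistently through the bonding maps $\gamma^x_k$ --- that is why $g=\sum_k\pi^{x*}_kc_k$ lands only in $C^r_{tlc}(\Omega_x)$ and not in any smaller space; compatibility of the $c_k$'s across levels is precisely the content of the later comparison with \v{C}ech cohomology, which is not needed here.
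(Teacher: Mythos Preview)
There is a genuine gap, and it concerns exactly the point you flag at the end. By definition
\[
C^r_{tlc}(\Omega_x)=\bigcup_{k\ge0}\pi_k^{x*}C^r(\Gamma^k_x)
\]
is a \emph{union}, not a completion: an element of $C^r_{tlc}$ must be the pullback of a single function living on one fixed graph $\Gamma^{k'}_x$. Your proposed $g=\sum_{k\ge0}\pi_k^{x*}c_k$ is an infinite sum with nonzero contributions at arbitrarily high levels, so it is not in $C^r_{tlc}(\Omega_x)$ at all. Even if you obtain decay $\|c_k\|_\infty\lesssim\lambda_\mu^{-(\alpha-\varepsilon)k}$, the limit lives in some $\mathcal{S}^\infty_\beta(\Omega_x)$, not in $C^r_{tlc}$, and the theorem as stated fails. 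Your closing remark that ``compatibility of the $c_k$'s across levels is precisely the content of the later comparison with \v Cech cohomology, which is not needed here'' is exactly backwards: that comparison is what is missing from your argument.

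The paper's proof supplies precisely this missing ingredient. Since $\check H^1(\Omega_x;\mathbb{R})$ is a finite-dimensional direct limit, there is a fixed level $k'$ such that every class in $\check H^1(\Omega_x;\mathbb{R})$ is already represented in $H^1(\Gamma^{k'}_x;\mathbb{R})$. One then solves $\delta_n f-\pi_{k'}^*g_n=X\pi_m^*u_m$ with $g_n\in C^r(\Gamma^{k'}_x)$ at the \emph{same} level $k'$ for every $n$, and $m=\max\{k',n\}$. Now $g=\pi_{k'}^*\sum_n g_n$ genuinely lies in $\pi_{k'}^{x*}C^r(\Gamma^{k'}_x)\subset C^r_{tlc}(\Omega_x)$. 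The estimates on $u_n$ proceed much as you outline, using \eqref{eqn:pass} and \eqref{eqn:diameter2}.

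A smaller technical point: writing $\delta_k u:=\lambda_\mu^k\pi_k^{x*}w_k$ presumes that $\Pi_{k-1}(\pi_k^{x*}w_k)=0$, i.e.\ that the conditional expectation of $w_k$ at every lower level vanishes. Normalizing $w_k$ to have zero total Lebesgue average only kills the level-$0$ piece; there is still mixing between levels when you sum. This is repairable (the bounds you derive on $\|w_k\|_{C^{r+1}}$ are what one actually needs to control $\|u\|_{r+1,\alpha-1-\varepsilon}$), but the presentation as an exact level-by-level decomposition of $u$ is not quite right.
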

      \begin{proof}[Proof of Theorem \ref{thm:r-alpha}]
        First it needs to be shown that for $r\in\mathbb{N}$, $H^1_{r,tlc}(\Omega_x) \cong \check H^1(\Omega_x;\mathbb{R})$. Indeed, let $f\in C^r_{tlc}(\Omega_x)$. As shown in \cite[Proposition 3.1]{T:transversal}, de Rham regularization can be used to show that there is a $f'\in C^\infty_{tlc}(\Omega_x)$ and $g\in C^r_{tlc}(\Omega_x)$ such that $f'-f = Xg$. Thus, $H^1_{r,tlc}(\Omega_x) \cong H^1_{tlc}(\Omega_x) \cong \check H^1(\Omega_x;\mathbb{R})$.

        Now let $f\in\mathcal{S}^r_\alpha(\Omega_x)$ for $r\in\mathbb{N}$ and $\alpha>1$. The task is, for any $\varepsilon\in(0,\alpha-1)$, to find a representative $g$ of $[g] \in H^1_{r,tlc}(\Omega_x)$ such that $f-g = Xu$ for some $u\in \mathcal{S}^{r+1}_{\alpha-1-\varepsilon}$. In view of the decomposition $f = \sum_k \delta_k f$, define $f_n := \sum_{k=0}^n \delta_k f$ for every $n$. Note that each $f_n$ has a class $[f_n]\in H^1_{r,tlc}(\Omega_x)$.
        The sequence $\{[f_n]\}$ of classes converges in $H^1_{r,tlc}(\Omega_x)$. Indeed, for $n>m>0$:
        $$\left\|[f_n]-[f_m]\right\|\leq \left\|\sum_{k>m}[\delta_k f] \right\| \leq K_x\sum_{k>m} \|\pi_{k*}\delta_k f\|_{C^r(\Gamma^k)}\leq K_{x,f,\varepsilon} \lambda_\mu^{-m(\alpha-\varepsilon)},$$
        for all $\varepsilon\in(0,\alpha)$, where \cite[Lemma 3.1]{T:transversal} was used in the second inequality. Denote by $[f_*]$ the limit of $[f_n]$ in $H^1_{r,tlc}(\Omega_x)$. Note that $f_n\rightarrow f$ in $\mathcal{S}^r_\alpha(\Omega_x)$, and so the assignment of $[f_*]$ to $[f]\in H^1_{r,\alpha}(\Omega_x)$ needs to be justified.

        Since $\check H^1(\Omega_x;\mathbb{R}) $ is a direct limit, there is a $k'\in\mathbb{N}$ such that every class $c\in \check H^1(\Omega_x;\mathbb{R})$ is determined by a class $c\in \check H^1(\Gamma^{k'}_x;\mathbb{R})$. Indeed, by ergodicity, $\beta_x:= \dim H^1(\Omega_x;\mathbb{R})<\infty$ is constant $\mu$-almost everywhere, and so there is a $k'$ and a subspace $W\subset H^1(\Gamma^{k'}_x;\mathbb{R})$ of dimension $\beta_x$ which contains a representative of every class in $H^1(\Omega_x;\mathbb{R})$.

        By \cite{sadun:deRham}, $\check H^1(\Gamma^{k'}_x;\mathbb{R})\cong C^\infty(\Gamma^{k'}_x)/XC^\infty(\Gamma^{k'}_x)$, and so any class $c\in\check H^1(\Gamma^{k'}_x;\mathbb{R})$ is determined by some $f_c\in C^\infty(\Gamma^{k'}_x)$ through $\pi^*_{k'}f_c$. Thus, for every $n$ there exists $g_n\in C^r(\Gamma_x^{k'})$ and $u_m\in C^r(\Gamma_x^{m})$, with $m = \max\{k',n\}$, such that $\delta_nf - \pi^*_{k'}g_n = X \pi^*_mu_m$. Without loss of generality, the functions $u_m$ are taken to be of zero average. Note that for $n>k'$,
        $$ \delta_n X \pi^*_nu_n = \delta_n(\delta_nf-\pi^*_{k'}g_n) = \delta_nf,$$
        and so by (\ref{eqn:pass}), $\pi_{n*} \delta_n X \pi^*_n u_n  = \lambda^{-n}_\mu (\pi_{n*}\delta_n \pi^*_n u_n )' =  \lambda^{-n}_\mu u_n' = \pi_{n*} \delta_n f  $ for $n>k'$. Since $f\in\mathcal{S}^r_\alpha(\Omega_x)$, this implies that $u_n\in C^{r+1}(\Gamma^n_x)$. In addition
        \begin{equation}
          \label{eqn:uBnd1}
          \|u_n^{(j)}\|_{\infty} = \lambda^{n}_\mu \|(\pi_{n*}\delta_n f)^{(j-1)} \|_\infty \leq  \lambda^{n}_\mu \|\pi_{n*}\delta_n f \|_{C^r(\Gamma^n_x)} \leq \lambda^{-n(\alpha-1-\varepsilon)}_\mu \vertiii{f}_{r,\alpha,\varepsilon}
        \end{equation}
        for $j = 1, \dots , r+1$. Moreover, for $\varepsilon\in(0,(\alpha-1)/2)$, using (\ref{eqn:diameter2}) and (\ref{eqn:pass}), it follows that
        \begin{equation}
          \label{eqn:uBnd2}
          \begin{split}
            \|u_n\|_{\infty} &\leq  \|u'_n\|_{\infty} \cdot \mathrm{diam}(\Gamma^k_x)\leq C_{\varepsilon,x}\lambda^n_\mu \|\pi_{n*} \delta_n f \|_{\infty} \cdot \mathrm{diam}(\Gamma^k_x) \\
            &\leq C_{\varepsilon,x}' \vertiii{f}_{r,\alpha,\varepsilon} \lambda^{-n(\alpha-1-2\varepsilon)}_\mu.
          \end{split}
        \end{equation}
        Thus (\ref{eqn:uBnd1}) and (\ref{eqn:uBnd2}) imply that there exists a $K>0$, independent of $n$, such that
        $$\|u_n\|_{C^r(\Gamma^n_x)}\leq K \lambda^{-n(\alpha-1-2\varepsilon)}.$$
        Thus, letting $U_n := \sum_{k=0}^n \pi^*_ku_k$ for $n\in\mathbb{N}$, it follows that
        $U_\infty:= \lim_n U_n \in \mathcal{S}^{r+1}_{\alpha-1-\varepsilon}$ for any $\varepsilon$. Moreover, by construction,
        $$\sum_{k=0}^n\delta_k f - \pi^*_{k'}\sum_{k=0}^ng_k = X\sum_{k=0}^n \pi^*_k u_k$$
        for every $n$, and so taking limits, it follows that $f - g = XU_\infty$ for $g\in C^r_{tlc}(\Omega_x)$.
      \end{proof}
      \begin{corollary}
        \label{cor:coh}
        For $r\in\mathbb{N}$, $\alpha>1$ and $\varepsilon\in(0,\alpha-1)$,
        $$H^1_{r,\alpha,\varepsilon}(\Omega_x)\cong \check H^1(\Omega;\mathbb{R}).$$
      \end{corollary}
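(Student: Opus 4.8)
The plan is to realize $\check H^1(\Omega_x;\mathbb{R})$ concretely as the transversally‑locally‑constant cohomology $H^1_{r,tlc}(\Omega_x)$ --- this identification is already in hand from the proof of Theorem~\ref{thm:r-alpha}, where de Rham regularization (\cite[Proposition 3.1]{T:transversal}) together with Sadun's theorem \cite{sadun:deRham} gives $\check H^1(\Omega_x;\mathbb{R})\cong H^1_{tlc}(\Omega_x)\cong H^1_{r,tlc}(\Omega_x)$ --- and then to show that the inclusion $C^r_{tlc}(\Omega_x)\hookrightarrow \mathcal S^r_\alpha(\Omega_x)$ descends to an isomorphism $H^1_{r,tlc}(\Omega_x)\xrightarrow{\ \sim\ }H^1_{r,\alpha,\varepsilon}(\Omega_x)$. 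Concretely I would work with the linear map $\psi\colon C^r_{tlc}(\Omega_x)\to H^1_{r,\alpha,\varepsilon}(\Omega_x)$ sending $g$ to its class $[g]$; this is well defined because a transversally locally constant $C^r$ function has only finitely many nonzero components $\delta_k g$, each of finite $C^r$‑norm, and so lies in $\mathcal S^r_\alpha(\Omega_x)$. Surjectivity of $\psi$ is immediate from Theorem~\ref{thm:r-alpha}: for $f\in\mathcal S^r_\alpha(\Omega_x)$ and the given $\varepsilon\in(0,\alpha-1)$ that theorem yields $g\in C^r_{tlc}(\Omega_x)$ and $u\in\mathcal S^{r+1}_{\alpha-1-\varepsilon}(\Omega_x)$ with $f-g=Xu$, hence $[f]=\psi(g)$. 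So everything reduces to identifying $\ker\psi$ with the subgroup of $X$‑coboundaries that is quotiented out in $H^1_{r,tlc}(\Omega_x)$.

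The heart of the matter --- and the step I expect to be the main obstacle --- is the claim that a transversally locally constant $C^r$ function which is an $X$‑coboundary in the large space $\mathcal S^{r+1}_{\alpha-1-\varepsilon}(\Omega_x)$ is automatically an $X$‑coboundary inside the small space $C^{r+1}_{tlc}(\Omega_x)$. To prove it, suppose $g=\sum_{k=0}^N\delta_k g\in C^r_{tlc}(\Omega_x)$ and $g=Xu$ with $u\in\mathcal S^{r+1}_{\alpha-1-\varepsilon}(\Omega_x)$. Since $\alpha-1-\varepsilon>0$, the defining norm bound makes $\sum_k\|\pi^x_{k*}\delta_k u\|_{C^0}$ converge, so $u$ is continuous and each $\delta_k u$ is a continuous $\mathcal A_k$‑measurable function on $\Omega_x$. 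Applying $\delta_k$ to $g=Xu$ and using \eqref{eqn:pass} gives $\lambda_\mu^{-k}(\pi^x_{k*}\delta_k u)'=\pi^x_{k*}\delta_k g$; for $k>N$ the right side vanishes, so $X\delta_k u=0$, i.e. $\delta_k u$ is invariant under the flow $\varphi$. By Proposition~\ref{prop:almostBiLip} the flow on $\Omega_x$ is conjugate to the special flow over the minimal Cantor system $\phi\colon X_B\to X_B$, hence minimal, so the continuous flow‑invariant function $\delta_k u$ is constant; applying $\Pi_{k-1}$ to $\delta_k u=\Pi_k u-\Pi_{k-1}u$ and using the tower property then forces this constant to vanish whenever $k\ge 1$. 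Hence $\delta_k u=0$ for all $k>N$, so $u=\sum_{k=0}^N\delta_k u$ is transversally locally constant, and from $(\pi^x_{k*}\delta_k u)'=\lambda_\mu^{k}\pi^x_{k*}\delta_k g\in C^r(\Gamma^k_x)$ we conclude $\pi^x_{k*}\delta_k u\in C^{r+1}(\Gamma^k_x)$ for each $k\le N$; that is, $u\in C^{r+1}_{tlc}(\Omega_x)$, and $g=Xu$ with $u\in C^{r+1}_{tlc}(\Omega_x)$.

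The same computation, read as $(\pi^x_{k*}\delta_k h)'=\lambda_\mu^{k}\pi^x_{k*}\delta_k g\in C^r(\Gamma^k_x)$, also shows that any $h\in C^r_{tlc}(\Omega_x)$ with $Xh\in C^r_{tlc}(\Omega_x)$ in fact lies in $C^{r+1}_{tlc}(\Omega_x)$; hence the subgroup ``$XC^r_{tlc}(\Omega_x)$'' quotiented out in the definition of $H^1_{r,tlc}(\Omega_x)$ is exactly $X\!\left(C^{r+1}_{tlc}(\Omega_x)\right)$, and using the inclusion $C^{r+1}_{tlc}(\Omega_x)\subset\mathcal S^{r+1}_{\alpha-1-\varepsilon}(\Omega_x)$ together with the previous paragraph this subgroup equals $\ker\psi$. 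Therefore $\psi$ descends to the desired isomorphism $H^1_{r,tlc}(\Omega_x)\cong H^1_{r,\alpha,\varepsilon}(\Omega_x)$, and composing it with $H^1_{r,tlc}(\Omega_x)\cong\check H^1(\Omega_x;\mathbb{R})$ gives the corollary. The only routine point needed along the way is that $\Pi_k$ (and hence $\delta_k$) preserves $C^r$‑regularity of transversally locally constant functions, so that the terms $\pi^x_{k*}\delta_k g\in C^r(\Gamma^k_x)$ above are legitimate; this is built into the cellular structure of the maps $\gamma^x_k$ and is handled as in \cite{T:transversal}.
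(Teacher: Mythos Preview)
Your proof is correct, and since the paper states Corollary~\ref{cor:coh} with no proof at all, your argument supplies exactly the verification the paper leaves to the reader. The surjectivity half is indeed immediate from Theorem~\ref{thm:r-alpha}; the substantive content you add is the injectivity step --- that a function $g\in C^r_{tlc}(\Omega_x)$ which becomes an $X$-coboundary in $\mathcal S^{r+1}_{\alpha-1-\varepsilon}(\Omega_x)$ was already one in $C^{r+1}_{tlc}(\Omega_x)$ --- and your use of the commutation $[X,\delta_k]=0$ together with minimality of the flow (via Proposition~\ref{prop:almostBiLip}) to force $\delta_k u=0$ for $k>N$ is the natural way to do this. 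Your care in reconciling the denominator $XC^r_{tlc}$ in the definition of $H^1_{r,tlc}$ with $XC^{r+1}_{tlc}$ is also well placed, since the paper is informal on this point.
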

      \subsection{Tame estimates}
      Let $x = (B,\leq_r)$ satisfy the standing assumptions and let $\tau_x = \min_{e\in \mathbb{E}^0_x}|e_v|$ be length of the smallest edge in $\Gamma^0_x$. Let $u_\tau:\mathbb{R}\rightarrow \mathbb{R}$ be a smooth bump function of integral one compactly supported on the interval $(-\frac{\tau_x}{5},\frac{\tau_x}{5})$. For any function $h:\mho_x\rightarrow \mathbb{R}$, the function $h_\tau$ will denote the ``bumpification'' of $h$ in $\Omega$. More precisely, the $\tau_x/5$-neighborhood of $\mathbb{V}^0_x$ can be lifted to $\Omega_x$ as $\mathcal{U}_x := \pi^{-1}_0(B_{\frac{\tau_x}{5}}(\mathbb{V}^0_x))$. As such, $\mathcal{U}_x$ admits the coordinates $(t,c)$, for $t\in [-\frac{\tau_x}{5},\frac{\tau_x}{5}]$ and $c\in \mho_x$. In these coordinates, $h_\tau(t,c) = u(t)h(c)$. Likewise, using the map $\varpi:X_B\rightarrow \mho_x$ from Proposition \ref{prop:almostBiLip}, functions on $X_B$ can be brought over to $\mho_x$, and then bumpified. The question is what happens to an $\alpha$-H\"older function once it is brought over to $\Omega_x$ using $\varpi$ and then bumpified. The following lemma gives the answer.
      \begin{lemma}
        \label{lem:bumpy}
        For $h\in H_\alpha(X^+_\mathcal{B})$ define $\hat{h}_\tau := ((\varpi^{-1})^*h)_\tau:\Omega_x\rightarrow \mathbb{R}$ be the corresponding function supported on $\mathcal{U}_x\subset \Omega_x $ as described above. Then for any $r\in \mathbb{N}$ and $\beta>0$ such that $\beta+r<\alpha$, $\hat{h}_\tau\in\mathcal{S}_{\beta}^r(\Omega_x)$ with
        $$\|\hat{h}_\tau\|_{r,\beta} \leq \frac{2r\lambda_\mu^\alpha \|u_\tau\|_{C^r}}{(\lambda_\mu-1)(1-\lambda_\mu^{\beta+r-\alpha})} \|h\|_\alpha.$$
      \end{lemma}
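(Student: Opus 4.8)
\textbf{Proof proposal for Lemma \ref{lem:bumpy}.}

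The plan is to compute the components $\pi_{k*}^x\delta_k\hat h_\tau$ of the bumpified function directly, using the decomposition \eqref{eqn:expansion}, and then estimate the $\mathcal{S}_\beta^r$-norm \eqref{eqn:norm} term by term. First I would describe $\hat h_\tau$ explicitly in the coordinates $(t,c)$ on $\mathcal{U}_x = \pi_0^{-1}(B_{\tau_x/5}(\mathbb{V}^0_x))$: by construction $\hat h_\tau(t,c) = u_\tau(t)\, h(\varpi^{-1}(c))$, and $\hat h_\tau$ vanishes outside $\mathcal{U}_x$. The key point is that the transverse factor $c\mapsto h(\varpi^{-1}(c))$ is, on each transverse cylinder, an $\alpha$-Hölder function, and Proposition \ref{prop:almostBiLip} together with \eqref{eqn:diameter} controls how the oscillation of $h$ on a cylinder $C_k^\perp(z)$ shrinks as $k$ grows. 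Concretely, if $y,z$ lie in the same level-$k$ transverse cylinder then $d_{\lambda_\mu}(\varpi^{-1}(y),\varpi^{-1}(z)) \le (\lambda_\mu-1)\,\bar d_{\lambda_\mu}(y,z)$-type bounds give $|h(\varpi^{-1}(y)) - h(\varpi^{-1}(z))| \le \|h\|_\alpha\,\bigl((\lambda_\mu-1)^{-1}\lambda_\mu^{-k}\bigr)^{\alpha}$ up to the metric normalization, so $\delta_k$ of the transverse factor has sup-norm $O(\lambda_\mu^{-\alpha k})$.

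Next I would carry out the estimate of $\|\pi_{k*}^x\delta_k\hat h_\tau\|_{C^r(\Gamma_x^k)}$. Since $\delta_k$ only sees the transverse coordinate and the $\mathbb{R}$-direction is untouched by the conditional expectations, one has $\delta_k\hat h_\tau(t,c) = u_\tau(t)\,\delta_k\bigl(h\circ\varpi^{-1}\bigr)(c)$ on $\mathcal{U}_x$ (with the understanding that $\delta_k$ acts on the transverse variable and the leafwise direction is carried along). The $C^r$-norm on $\Gamma_x^k$ then factors: the leafwise derivatives up to order $r$ all land on $u_\tau$, contributing $\|u_\tau\|_{C^r}$, while the transverse piece contributes the oscillation bound $O(\lambda_\mu^{-\alpha k})$ from the previous paragraph, with the factor $(\lambda_\mu-1)^{-1}$ coming from \eqref{eqn:diameter}. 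This yields
$$\|\pi_{k*}^x\delta_k\hat h_\tau\|_{C^r(\Gamma_x^k)} \le \frac{2r\,\lambda_\mu^{\alpha}\,\|u_\tau\|_{C^r}}{\lambda_\mu-1}\,\lambda_\mu^{-\alpha k}\,\|h\|_\alpha$$
(the constant $2r\lambda_\mu^\alpha$ absorbing the finitely many low-$k$ terms, the telescoping in $\delta_k = \Pi_k - \Pi_{k-1}$, and the bound on how $u_\tau$'s derivatives interact with the $r$ leafwise directions). Summing the defining series for $\|\hat h_\tau\|_{r,\beta} = \sum_{k\ge0}\lambda_\mu^{\beta k}\|\pi_{k*}^x\delta_k\hat h_\tau\|_{C^r(\Gamma_x^k)}$ gives a geometric series with ratio $\lambda_\mu^{\beta+r-\alpha}<1$ (here is where the hypothesis $\beta+r<\alpha$ is used — the extra $\lambda_\mu^{rk}$ coming from the fact that on $\Gamma_x^k$ a $C^r$-norm involves $r$ derivatives each of which, via \eqref{eqn:pass}, can cost a factor $\lambda_\mu^{k}$; this is the subtle point), summing to $(1-\lambda_\mu^{\beta+r-\alpha})^{-1}$, which produces exactly the claimed bound.

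The main obstacle I anticipate is bookkeeping the interplay between the leafwise $C^r$-norm on $\Gamma_x^k$ and the renormalization scaling: a function that is ``bumpy'' in the leaf direction with a fixed bump $u_\tau$ on $\Gamma_x^0$ gets rescaled when viewed on $\Gamma_x^k$, and one must check that the relevant derivatives of $\hat h_\tau$ restricted to $\Gamma_x^k$ really do only pick up the factor $\lambda_\mu^{rk}$ (absorbed by the hypothesis $\beta+r<\alpha$) and not more — this requires being careful about which coordinate the $C^r$-norm differentiates in, and about the support of $\hat h_\tau$ relative to the cell structure of $\Gamma_x^k$. A secondary technical point is verifying that $\hat h_\tau$ genuinely lies in the completion $\mathcal{S}_\beta^r(\Omega_x)$ and not merely that the formal norm is finite; this follows because $\hat h_\tau$ is a norm-limit of its partial sums $\sum_{k\le n}\delta_k\hat h_\tau$, each of which is transversally locally constant and smooth in the leaf direction, hence in $C^\infty_{tlc}(\Omega_x)$.
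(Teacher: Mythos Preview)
Your approach is essentially the paper's: compute $\delta_k\hat h_\tau$ as a product of a rescaled bump in the leaf direction and a transverse oscillation term, bound the oscillation by $O(\lambda_\mu^{-\alpha k})$ via H\"older continuity, and pick up a factor $\lambda_\mu^{rk}$ from the $C^r$-norm on $\Gamma_x^k$. The structure is right and you correctly identify the ``subtle point'' in your final paragraph.

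That said, your write-up contains an internal inconsistency that you should fix. In your displayed estimate you assert
\[
\|\pi_{k*}^x\delta_k\hat h_\tau\|_{C^r(\Gamma_x^k)} \le \frac{2r\,\lambda_\mu^{\alpha}\,\|u_\tau\|_{C^r}}{\lambda_\mu-1}\,\lambda_\mu^{-\alpha k}\,\|h\|_\alpha,
\]
but then you sum a geometric series with ratio $\lambda_\mu^{\beta+r-\alpha}$. These two claims are incompatible: if the $C^r$-norm on $\Gamma_x^k$ were really $O(\lambda_\mu^{-\alpha k})$, the ratio would be $\lambda_\mu^{\beta-\alpha}$ and you would only need $\beta<\alpha$. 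The correct bound at the level of $\Gamma_x^k$ already contains the $\lambda_\mu^{rk}$ factor, i.e.\ it is $O(\lambda_\mu^{-(\alpha-r)k})$, and this is exactly what the paper derives: in local coordinates $y$ on $\Gamma_x^k$ the pushed-down function has the form $g_k(y)=u_\tau(\lambda_\mu^k y)\,h_k(z_k)$, so each $y$-derivative brings down a factor $\lambda_\mu^k$ by the chain rule, giving $\|g_k^{(r)}\|_\infty\le \lambda_\mu^{rk}\|u_\tau^{(r)}\|_\infty |h_k(z_k)|$. Your parenthetical invoking \eqref{eqn:pass} is an equivalent way to see this, but the factor must appear \emph{inside} the $C^r(\Gamma_x^k)$ estimate, not be inserted afterwards at the summation stage. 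Once you correct the displayed line to $\lambda_\mu^{-(\alpha-r)k}$, the rest of your argument matches the paper's.

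A smaller point: for the oscillation bound it is cleaner to argue directly on $X_B$ rather than via Proposition~\ref{prop:almostBiLip}. If $c,c'\in C_k^\perp(z)\subset\mho_x$ then, by the very construction of $\varpi$, the paths $\varpi^{-1}(c),\varpi^{-1}(c')$ agree on their first $k$ edges, hence $d_{\lambda_\mu}(\varpi^{-1}(c),\varpi^{-1}(c'))\le \lambda_\mu^{-k}$ and $|h(\varpi^{-1}(c))-h(\varpi^{-1}(c'))|\le |h|_\alpha\,\lambda_\mu^{-\alpha k}$. This avoids the somewhat circuitous ``$(\lambda_\mu-1)\bar d$-type bounds'' you sketch.
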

      \begin{proof}
        Using local coordinates $z = (t,c) \in \mathcal{U}_x$, by definition,
        $$\delta_k \hat{h}_\tau(t,c) = u(t)\left[  \hat{\nu}_{k,z}^{-1}\int_{C^\perp_k(z)} h(\varpi^{-1}(c')) \, d\nu_{k,z}- \hat{\nu}_{k-1,z}^{-1}\int_{C^\perp_{k-1}(z)} h(\varpi^{-1}(c')) \, d\nu_{k-1,z}\right] = \pi^*_k g_k(z)$$
        for some $g_k:\Gamma^k_x\rightarrow \mathbb{R}$ which is supported on the $\lambda_\mu^{-k}\tau_x/5$-neighborhood of $(\gamma_{0,k}^x)^{-1}(\mathbb{V}^0_x)$. Indeed, in a local coordinate $y$ centered around a point $z_k\in (\gamma_{0,k}^x)^{-1}(\mathbb{V}^0_x) $, the function $g_k$ is of the form $g_k(y) = u(\lambda_\mu^k y)h_k(z_k)$, where $h_k(z_k) = \Pi_k [(\varpi^{-1})^*h](z_*) - \Pi_{k-1}[(\varpi^{-1})^*h](z_*)$ for some $z_*\in \pi^{-1}_k(z_k)$. As such, $\|g_k^{(r)}\|_\infty \leq  \lambda_\mu^{rk}\|u^{(r)}\|_\infty |h_k(z_k)|$, and so it remains to bound $|h_k(z_k)|$.

        To bound this last term,
        \begin{equation*}
          \begin{split}
            h_k(z_k) &=  \hat{\nu}_{k,z_*}^{-1}\int_{C^\perp_k(z_*)} h(\varpi^{-1}(c')) \, d\nu_{k,z_*}- \hat{\nu}_{k-1,z_*}^{-1}\int_{C^\perp_{k-1}(z_*)} h(\varpi^{-1}(c')) \, d\nu_{k-1,z_*} \\
            &=  \hat{\nu}_{k,z_*}^{-1}\int_{C^\perp_k(z_*)} h(\varpi^{-1}(c')) - h(\varpi^{-1}(c)) \, d\nu_{k,z_*} \\ & \hspace{1.5in}- \hat{\nu}_{k-1,z_*}^{-1}\int_{C^\perp_{k-1}(z_*)} h(\varpi^{-1}(c')) -h(\varpi^{-1}(c))\, d\nu_{k-1,z_*} \\            
          \end{split}
        \end{equation*}
        and so by (\ref{eqn:diameter})
        $$|h_k(z_k)|\leq \mathrm{diam}(C_k^\perp(z_*))^\alpha|h|_\alpha + \mathrm{diam}(C_{k-1}^\perp(z_*))^\alpha|h|_\alpha\leq \frac{2\lambda_\mu^\alpha}{\lambda_\mu-1} \lambda_\mu^{-k\alpha}|h|_\alpha, $$
        from which it follows that
        $$\|g_k^{(r)}\|_\infty \leq \frac{2\lambda_\mu^\alpha \|u^{(r)}\|_\infty}{\lambda_\mu-1} \lambda_\mu^{-k(\alpha-r)}|h|_\alpha .$$
        Thus,
        \begin{equation*}
          \begin{split}
            \|\hat{h}_\tau\|_{r,\beta} &= \sum_{k\geq 0}\lambda_\mu^{\beta k}\|\pi_{k*}\delta_k \hat{h}_\tau\|_{C^r(\Gamma^k_x)}\leq  \sum_{k\geq 0} \frac{2r\lambda_\mu^\alpha \|u\|_{C^r}}{\lambda_\mu-1} \lambda_\mu^{(\beta+r-\alpha)k}|h|_\alpha \\
            &\leq \frac{2r\lambda_\mu^\alpha \|u\|_{C^r}}{(\lambda_\mu-1)(1-\lambda_\mu^{\beta+r-\alpha})} |h|_\alpha
          \end{split}
        \end{equation*}
        if $\beta+r<\alpha$.
      \end{proof}
      In light of lemma \ref{lem:bumpy} -- which showed that if $h\in H_\alpha(X^+_\mathcal{B})$ then $\hat{h}_\tau\in\mathcal{S}_{\alpha-1-\varepsilon}^1$ -- and Corollary \ref{cor:coh}, the function $\hat{h}_\tau$ has a class $[\hat{h}_\tau]\in H^1_{1,\alpha-1-\varepsilon,\varepsilon'}(\Omega_x)\cong \check H^1(\Omega_x;\mathbb{R})$ as long as $\alpha-1-\varepsilon-\varepsilon'>1$. Since $\check H^1(\Omega_x;\mathbb{R})$ is finite dimensional, then $[\hat{h}_\tau]$ is contained in a finite dimensional vector space as long as $\alpha>2$. What remains to show in this section is that the class $[\hat{h}_\tau]$ is trivial, i.e. $\hat{h}_\tau = X\Theta$ with $\Theta\in \mathcal{S}^2_{\alpha-2-\varepsilon}$ if and only if $h = g\circ \varphi - g$, with estimates on $g$. Before getting to that, one more lemma for estimates is needed.
      \begin{lemma}
        \label{lem:pullback}
        Let $\Theta\in \mathcal{S}^r_\alpha$ for $r\in\mathbb{N}$, $\alpha>0$ and define $g:\mho_x\rightarrow\mathbb{R}$ by the restriction of $\Theta$ to $\mho_x$. Then $\varpi^*g\in H_{\alpha-\varepsilon}(X_B)$ for all $\varepsilon\in (0,\alpha)$. Moreover, for any $\varepsilon\in (0,\alpha)$ there is a $C_{\alpha,\varepsilon,x}$ such that
        $$\|\varpi^*g\|_{\alpha-\varepsilon} \leq C_{\alpha,\varepsilon,x} \|\Theta\|_{r,\alpha}.$$
      \end{lemma}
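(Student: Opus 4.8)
The plan is to combine the martingale expansion of $\Theta$ with the combinatorial description of the homeomorphism $\varpi$ established in the proof of Proposition \ref{prop:almostBiLip}. Write $\Theta=\sum_{k\ge0}\delta_k\Theta=\sum_{k\ge0}\pi_k^{x*}g_k$ with $g_k:=\pi_{k*}^x\delta_k\Theta\in C^r(\Gamma^k_x)$, so that by definition $\sum_{k\ge0}\lambda_\mu^{\alpha k}\|g_k\|_{C^r(\Gamma^k_x)}=\|\Theta\|_{r,\alpha}<\infty$. Since $\lambda_\mu>1$ this already forces $\sum_k\|g_k\|_{C^0(\Gamma^k_x)}\le\|\Theta\|_{r,\alpha}$, so the series converges uniformly, $\Theta$ is continuous, its restriction $g=\Theta|_{\mho_x}$ is defined pointwise, and $\varpi^*g=\Theta\circ\varpi=\sum_{k\ge0}g_k\circ(\pi_k^x\circ\varpi)$ converges uniformly on $X_B$.

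The first thing I would record is the structural fact driving everything: by the construction of $\varpi$ in the proof of Proposition \ref{prop:almostBiLip}, the coordinate $\varpi(p)_k=\pi_k^x(\varpi(p))\in\Gamma^k_x$ depends only on the length-$k$ initial path $p_1p_2\cdots p_k$ — it is the point of $(\gamma_{0,k}^x)^{-1}(\mathbb{V}^0_x)\cap[e_{r(p_k)}\setminus\partial^+e_{r(p_k)}]$ matched to $p_1\cdots p_k$ under the order-preserving bijection with $E_{r(p_k)}$. Hence each term $p\mapsto g_k(\varpi(p)_k)$ of the expansion of $\varpi^*g$ is constant on the cylinder set $C_k(p)\subset X_B$.

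Now take $p,q\in X_B$ with $d_{\lambda_\mu}(p,q)=\lambda_\mu^{-k}$, i.e.\ $p_i=q_i$ for $i\le k$ but $p_{k+1}\ne q_{k+1}$; then the terms of index $\le k$ cancel, so
$$|\varpi^*g(p)-\varpi^*g(q)|\le 2\sum_{j\ge k+1}\|g_j\|_{C^0(\Gamma^j_x)}\le 2\sum_{j\ge k+1}\lambda_\mu^{-\alpha j}\bigl(\lambda_\mu^{\alpha j}\|g_j\|_{C^r(\Gamma^j_x)}\bigr)\le 2\lambda_\mu^{-\alpha(k+1)}\|\Theta\|_{r,\alpha}.$$
As $d_{\lambda_\mu}(p,q)^\alpha=\lambda_\mu^{-\alpha k}$ and $\|\varpi^*g\|_{C^0(X_B)}=\|g\|_{C^0}\le\|\Theta\|_{r,\alpha}$, this yields $\varpi^*g\in H_\alpha(X_B)$ with $\|\varpi^*g\|_\alpha\le(1+2\lambda_\mu^{-\alpha})\|\Theta\|_{r,\alpha}$; since $\mathrm{diam}(X_B,d_{\lambda_\mu})\le1$ the same estimate bounds the weaker $(\alpha-\varepsilon)$-H\"older norm for every $\varepsilon\in(0,\alpha)$, which is all that is claimed. (Alternatively, one may sum the tail through the coarser norm $\vertiii{\cdot}_{r,\alpha,\varepsilon}$ of Proposition \ref{eqn:spaceProps}(iv), obtaining the $(\alpha-\varepsilon)$-H\"older bound directly with an $\varepsilon$-dependent constant.)

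The only step that is not purely formal is the claim that $\varpi(p)_k$ is determined by $p_1\cdots p_k$; this is visible from the definition of $\varpi$, but I would verify carefully that the boundary identifications $\partial^+e_{v_1}\sim\partial^-e_{v_2}$ and the separate treatment of the extremal paths $x^\pm$ in building $\Omega_x$ do not disturb it — each of these involves only finitely many levels, so the prefix-dependence is preserved. I expect this check, together with keeping the index shift between ``finite path of length $k$'' and ``the $k$-th inverse-limit coordinate'' straight, to be the only place that needs care; the remainder is the sum of a geometric series.
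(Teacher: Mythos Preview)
Your argument is correct and in fact proves more than the lemma claims: you obtain $\varpi^*g\in H_\alpha(X_B)$ with $\|\varpi^*g\|_\alpha\le(1+2\lambda_\mu^{-\alpha})\|\Theta\|_{r,\alpha}$, with no $\varepsilon$-loss. The prefix-dependence of $\varpi(p)_k$ on $p_1\cdots p_k$ is exactly as you read it from the construction in the proof of Proposition~\ref{prop:almostBiLip}, and the boundary identifications do not interfere (they only concern how vertices of $\Gamma^k_x$ are glued, not which preimage point in $(\gamma_{0,k}^x)^{-1}(\mathbb{V}^0_x)$ a finite path selects).

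The paper's proof takes a slightly different, less economical route. Rather than summing the tail directly against the $\ell^1$-norm $\|\Theta\|_{r,\alpha}$ as you do, it first passes to the sup-norm $\vertiii{\Theta}_{r,\alpha,\varepsilon}$ via Proposition~\ref{eqn:spaceProps}(iv), which costs an $\varepsilon$; it then bounds each difference $|\delta_n\Theta(0,\varpi(c_1))-\delta_n\Theta(0,\varpi(c_2))|$ by a Lipschitz estimate $\|\pi_{n*}\delta_n\Theta\|_{C^r}\cdot\mathrm{diam}(\Gamma^n_x)$ and invokes the subexponential diameter bound~(\ref{eqn:diameter2}), costing a further $\varepsilon'$. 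Your crude bound $2\|g_j\|_{C^0}$ on each tail term, combined with the built-in $\ell^1$ decay of the norm, sidesteps both losses. What the paper's route buys is that it only needs the weaker information $\Theta\in\mathcal{R}^r_{\alpha,\varepsilon}$; what yours buys is the sharp exponent and a cleaner constant. Since the lemma is only ever applied to elements of $\mathcal{S}^r_\alpha$, your version is strictly preferable here. Your parenthetical alternative via $\vertiii{\cdot}_{r,\alpha,\varepsilon}$ is essentially the paper's argument.
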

      \begin{proof}
        Let $\Theta\in \mathcal{S}^r_\alpha$ and $\varepsilon\in (0,\alpha)$. Part (iv) of Proposition  \ref{eqn:spaceProps} implies that $\|\pi_{k*}\delta_k\Theta\|_{C^r}\leq \lambda_\mu^{-k(\alpha-\varepsilon)}\vertiii{\Theta}_{r,\alpha,\varepsilon}$ for all $k\geq 0$.
        
        Let $c_1,c_2\in X_B$ with $c_2\in C_k(c_1)\setminus C_{k+1}(c_1)$, that is, they agree in the first $k$ coordinates but not in the $(k+1)^{st}$. Since $g(c) = \Theta(0,c)$ in the product coordinates in the neighborhood $\mathcal{U}_x$ of $\mho_x\subset\Omega_x$, for $\beta>0$,
        \begin{equation*}
          \begin{split} 
            \frac{|g(\varpi(c_1))-g(\varpi(c_2))|}{d_{\lambda_\mu}(c_1,c_2)^\beta} &= \frac{|g(\varpi(c_1))-g(\varpi(c_2))|}{\lambda_\mu^{-\beta k}} \leq \lambda_\mu^{\beta k}\sum_{n\geq k} |\delta_n\Theta(0,\varpi(c_1))- \delta_n\Theta(0,\varpi(c_2))|  \\
            &\leq \lambda_\mu^{\beta k}\sum_{n\geq k}\|\pi_{n*}\delta_n\Theta\|_{C^r}\cdot \mathrm{diam}(\Gamma^k_x) \leq  \lambda^{\beta k}_\mu \vertiii{\Theta}_{r,\alpha,\varepsilon} C_{x,\varepsilon'}\sum_{n\geq k}\lambda_{\mu}^{-k(\alpha-\varepsilon-\varepsilon')} \\
            &\leq  \lambda^{\beta k}_\mu C_{x,\varepsilon'} \vertiii{\Theta}_{r,\alpha,\varepsilon} \frac{\lambda_{\mu}^{-k(\alpha-\varepsilon-\varepsilon')}}{1-\lambda_\mu^{\varepsilon+\varepsilon'-\alpha}}
          \end{split}
        \end{equation*}
        for any $\varepsilon'\in (0,\alpha-\varepsilon)$, where it was used that for all $\varepsilon'>0$, $\mathrm{diam}(\Gamma^k_x)\leq C_{x,\varepsilon'}\lambda_\mu^{k\varepsilon'}$ for all $k>0$. Thus if $\beta = \alpha-\varepsilon-\varepsilon'$, the bound above gives the bound on the H\"older constant of $g$:
        $$ |\varpi^*g|_{\alpha-\varepsilon-\varepsilon'} \leq \frac{C_{x,\varepsilon'} \vertiii{\Theta}_{r,\alpha,\varepsilon}}{1-\lambda_\mu^{\varepsilon+\varepsilon'-\alpha}}\leq \frac{C_{x,\varepsilon'}}{1-\lambda_\mu^{\varepsilon+\varepsilon'-\alpha}} \|\Theta\|_{r,\alpha}$$
        by part (iv) of Proposition \ref{eqn:spaceProps}. Finally, $\|\varpi^* g\|_\infty \leq \|\Theta\|_\infty \leq \sum_{k\geq 0} \|\pi_{k*}\delta_k\Theta\|_\infty\leq \|\Theta\|_{r,\alpha}$ and so
        $$\|\varpi^*g\|_{\alpha-\varepsilon-\varepsilon'}\leq \left(1+ \frac{C_{x,\varepsilon'} }{1-\lambda_\mu^{\varepsilon+\varepsilon'-\alpha}} \right) \|\Theta\|_{r,\alpha}$$
        which finishes the proof.
      \end{proof}
      The following proposition completes the proof of Theorem \ref{thm:main}.
      \begin{proposition}
        Under the standing hypothesis, for $\mu$-almost every $x = (B,\leq_r)$, for $\alpha>2$, there are finitely many $\phi$-invariant distributions $\mathcal{D}_1,\dots, \mathcal{D}_{\mu}\in H_\alpha(X_B)'$ such that $\mathcal{D}_1(h) = \cdots =\mathcal{D}_{\mu}(h)=0$ if and only if for all $\varepsilon\in (0,\alpha-2)$ there exists a $K = K(\alpha,\varepsilon,x,\mu)$ and $g\in H_{\alpha-2-\varepsilon}(X_B)$ with $h = g\circ \phi-g$ and 
        $$\|g\|_{\alpha-2-\varepsilon}\leq K \|h\|_\alpha.$$
      \end{proposition}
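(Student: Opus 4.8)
The plan is to transport the cohomological equation for $\phi$ on $X_B$ to the leafwise cohomological equation for the $\mathbb R$-action on the solenoid $\Omega_x$, where Corollary~\ref{cor:coh} has already computed the relevant cohomology to be finite dimensional, and then to read off the invariant distributions from the dual of $\check H^1(\Omega_x;\mathbb R)$. First I would set $d_\mu:=\dim\check H^1(\hat X_B;\mathbb R)$: by Proposition~\ref{prop:almostBiLip} this equals $\dim\check H^1(\Omega_x;\mathbb R)$, which is finite by (the proof of) Theorem~\ref{thm:r-alpha}, and since the mapping torus of a minimal homeomorphism and that of any of its first-return maps to a clopen set are homeomorphic while $\sigma$ replaces $\phi$ by such a return map, the map $x\mapsto\dim\check H^1(\hat X_B;\mathbb R)$ is $\sigma$-invariant, hence $\mu$-a.e.\ equal to a constant $d_\mu\in\mathbb N$. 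Fix a basis $\ell_1,\dots,\ell_{d_\mu}$ of $\check H^1(\Omega_x;\mathbb R)^*$. For $h\in H_\alpha(X_B)$ with $\alpha>2$, Lemma~\ref{lem:bumpy} with $r=1$ gives $\hat h_\tau\in\mathcal S^1_{\alpha-1-\varepsilon_0}(\Omega_x)$ with $\|\hat h_\tau\|_{1,\alpha-1-\varepsilon_0}\le C_x\|h\|_\alpha$, and for $\varepsilon_0,\varepsilon_1$ small Corollary~\ref{cor:coh} endows it with a class $[\hat h_\tau]\in H^1_{1,\alpha-1-\varepsilon_0,\varepsilon_1}(\Omega_x)\cong\check H^1(\Omega_x;\mathbb R)$ which, by construction, is linear in $h$ and independent of the auxiliary exponents. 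I would then define
$$\mathcal D_i(h):=\ell_i\bigl([\hat h_\tau]\bigr),\qquad i=1,\dots,d_\mu,$$
which is linear and, by the estimate in Lemma~\ref{lem:bumpy}, bounded: $|\mathcal D_i(h)|\le\|\ell_i\|\,\|\hat h_\tau\|_{1,\alpha-1-\varepsilon_0}\le C_x\|\ell_i\|\,\|h\|_\alpha$, so $\mathcal D_i\in H_\alpha(X_B)'$ (and the same formula defines $\mathcal D_i$ on $H_\alpha$ for every $\alpha>2$).

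The geometric heart of the argument is a transfer dictionary between the two cohomological equations, which I would establish next. Using that $\varpi_x$ (Proposition~\ref{prop:almostBiLip}) conjugates the suspension flow of $(\phi,l)$ to the $\mathbb R$-action and carries $X_B\times\{0\}$ onto $\mho_x$, and that $u_\tau$ has integral one supported inside a single roof interval, a direct computation of the integral of $\varpi_x^*\widehat w_\tau$ along the leaf segment joining $\varpi(p)$ to $\varpi(\phi(p))$ gives, for $w\in C(X_B)$, the transfer identity: this integral equals $w(p)$ modulo an explicit coboundary of $w$ (it equals $w(p)$ for a one-sided bump and $\tfrac12(w(p)+w(\phi(p)))$ for an even one). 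Two consequences follow. First, if $\hat h_\tau=X\Theta$ with $X\Theta$ defined, then integrating $X\Theta$ over one return and setting $\tilde g:=\varpi^*(\Theta|_{\mho_x})$ yields $\tilde g\circ\phi-\tilde g=h$ modulo a coboundary of $h$, so $h=g\circ\phi-g$ where $g$ is $\tilde g$ plus a bounded multiple of $h$. Second, conversely, any $w=G\circ\phi-G$ with $G\in C(X_B)$ equals $X\Theta$ for a continuous $\Theta$ on $\Omega_x$, obtained by setting $\widehat\Theta(p,s):=G_0(p)+\int_0^s(\varpi_x^*\widehat w_\tau)(\varphi_u(p,0))\,du$ for $s\in[0,l(p)]$, with $G_0\in C(X_B)$ chosen by the transfer identity so that $\widehat\Theta$ descends to $\hat X_B$, and pushing forward by $\varpi_x$. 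Combining the second consequence with $[X,\Pi_N]=0$ (Proposition~\ref{eqn:spaceProps}(i))—which forces a continuous primitive of a transversally locally constant function to come from a continuous primitive on the corresponding $\Gamma^N_x$, so the associated \v Cech class vanishes—I would conclude that the bumpification of any continuous coboundary has trivial class in $\check H^1(\Omega_x;\mathbb R)$. Applied to $f-f\circ\phi=(-f)\circ\phi-(-f)$ this gives $\mathcal D_i(f)=\mathcal D_i(f\circ\phi)$, so each $\mathcal D_i$ is $\phi$-invariant.

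With these tools the equivalence follows. One direction is immediate: if $h=g\circ\phi-g$ with $g$ continuous, then by the previous paragraph $\mathcal D_1(h)=\cdots=\mathcal D_{d_\mu}(h)=0$. For the other direction, $\mathcal D_i(h)=0$ for all $i$ means $[\hat h_\tau]=0$ in $\check H^1(\Omega_x;\mathbb R)$, hence in $H^1_{1,\alpha-1-\varepsilon_0,\varepsilon_1}(\Omega_x)$, so $\hat h_\tau=X\Theta$ with $\Theta\in\mathcal S^2_{\alpha-2-\varepsilon_0-\varepsilon_1}(\Omega_x)$; normalize $\mu_x(\Theta)=0$. Differentiating (\ref{eqn:pass}) twice expresses $(\pi_{k*}\delta_k\Theta)'$ and $(\pi_{k*}\delta_k\Theta)''$ through $\pi_{k*}\delta_k\hat h_\tau$ and $\pi_{k*}\delta_k X\hat h_\tau$; since $\hat h_\tau$ is a bumpification, so is $X\hat h_\tau$ (with $u_\tau'$ in place of $u_\tau$), and the computation in the proof of Lemma~\ref{lem:bumpy} together with (\ref{eqn:diameter}) bounds both sup-norms by $C_x\lambda_\mu^{-k\alpha}\|h\|_\alpha$. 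Feeding this into the elementary Poincar\'e inequality on $\Gamma^k_x$ from the proof of Proposition~\ref{eqn:spaceProps}(iii) and using (\ref{eqn:diameter2}) gives $\|\Theta\|_{2,\gamma}\le C_{x,\gamma}\|h\|_\alpha$ for every $\gamma<\alpha-2$—this is exactly where $\alpha>2$ is used. Then $\tilde g:=\varpi^*(\Theta|_{\mho_x})\in H_{\alpha-2-\varepsilon_0-\varepsilon_1-\varepsilon_2}(X_B)$ with $\|\tilde g\|\le C_x\|\Theta\|_{2,\alpha-2-\varepsilon_0-\varepsilon_1}\le C_x\|h\|_\alpha$ by Lemma~\ref{lem:pullback} ($r=2$), and by the transfer identity $h=g\circ\phi-g$ for $g$ equal to $\tilde g$ plus a bounded multiple of $h$; choosing $\varepsilon_0=\varepsilon_1=\varepsilon_2=\varepsilon/3$ and using the continuous inclusion $H_\alpha\hookrightarrow H_{\alpha-2-\varepsilon}$ gives $g\in H_{\alpha-2-\varepsilon}(X_B)$ with $\|g\|_{\alpha-2-\varepsilon}\le K(\alpha,\varepsilon,x,\mu)\|h\|_\alpha$. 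Since $d_\mu=\dim\check H^1(\hat X_B;\mathbb R)$, this also supplies the missing ingredient in Theorem~\ref{thm:main}.

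I expect the main obstacle to be the transfer dictionary of the second paragraph: making the ``integrate a leafwise primitive across one return to $\mho_x$'' picture rigorous in the (merely H\"older, ultrametric) Cantor geometry and the inverse-limit structure of $\Omega_x$, including the verification that the primitive $\widehat\Theta$ built in the converse direction is continuous and descends to $\hat X_B$, and the bookkeeping of which explicit coboundary of $w$ appears. A secondary difficulty is the regularity accounting in the third paragraph, where the total loss $\alpha\rightsquigarrow\alpha-2-\varepsilon$ decomposes as $1+\varepsilon_0$ (Lemma~\ref{lem:bumpy}) $+\,1$ (writing $\hat h_\tau=X\Theta$) $+\,\varepsilon_1+\varepsilon_2$ (Corollary~\ref{cor:coh} and Lemma~\ref{lem:pullback}), and where one must control $\Theta$ in $\mathcal S^2$ rather than merely $\mathcal S^1$—possible precisely because bumpifications are smooth along leaves.
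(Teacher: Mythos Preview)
Your proposal is correct and follows essentially the same strategy as the paper: bumpify $h$ via Lemma~\ref{lem:bumpy}, read off the distributions as the coordinates of $[\hat h_\tau]$ in $\check H^1(\Omega_x;\mathbb R)$ via Corollary~\ref{cor:coh}, and transfer the leafwise primitive $\Theta$ back to $X_B$ through $\varpi$ with the chain Lemma~\ref{lem:pullback} $\to$ Proposition~\ref{eqn:spaceProps}(iii) $\to$ Lemma~\ref{lem:bumpy}. The one simplification the paper makes that you might adopt is to set $g(c):=\Theta\circ\varphi_{-\tau_x/5}(\varpi(c))$ rather than $\tilde g:=\varpi^*(\Theta|_{\mho_x})$: integrating $X\Theta=\hat h_\tau$ from $-\tau_x/5$ to $r(\varpi(c))-\tau_x/5$ picks up the full mass of $u_\tau$ on a single fiber and gives $h=g\circ\phi-g$ exactly, so your ``modulo a coboundary of $h$'' correction and the attendant bookkeeping disappear; conversely, in the direction $h=g\circ\phi-g\Rightarrow[\hat h_\tau]=0$ the paper simply writes down $\Theta$ in the product coordinates on $\mathcal U_x$ and checks it extends constantly across $\Omega_x\setminus\mathcal U_x$, which replaces your more abstract ``transfer dictionary'' by a two-line computation.
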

      \begin{proof}
        Let $h\in H_\alpha(X_B)$ for $\alpha>2$ and define $\hat{h}_\tau = ((\varpi^{-1})^*h)_\tau:\Omega_x\rightarrow \mathbb{R}$ be the bumpification supported compactly on $\mathcal{U}_x$ as in Lemma \ref{lem:bumpy}. By the same lemma, $\hat{h}_\tau\in\mathcal{S}^1_{\alpha-1-\varepsilon}$ for any $\varepsilon \in(0,\alpha-2)$. Let $r:\mho_x\rightarrow \mathbb{R}$ be the first return time to $\mho_x$ under the flow $\varphi$.

        As observed above, since $\alpha-1-\varepsilon>1$, the function $\hat{h}_\tau$ has a class in the finite dimensional space $H_{r,\alpha-1-\varepsilon}^1(\Omega_x)\cong \check H^1(\Omega;\mathbb{R})$. Pick an orthonormal basis $\left\{v_1,\dots,v_{d_\mu}\right\}$ for $H_{r,\alpha-1-\varepsilon}^1(\Omega_x)$ which span subspaces $W_i^x:= \mathrm{span}\, v_i$. Let $\mathcal{P}_i:H_{r,\alpha-1-\varepsilon}^1(\Omega_x)\rightarrow W_i^x$ be the associated projections, and define the distribution $\mathcal{D}_i\in H_{\alpha}(X_B)'$ as
        $$\mathcal{D}_i:= h\mapsto \mathrm{sign}(\mathcal{P}_i([\hat{h}_\tau]))\cdot\|\mathcal{P}_i([\hat{h}_\tau])\|,$$
        where the sign is taken relative to the chosen basis, and the class $[\hat{h}_\tau]$ is taken in $H_{r,\alpha-1-\varepsilon}^1(\Omega_x)$.

        Suppose that $h = g\circ\phi- g$. In the product coordinates in $\mathcal{U}_x$, for $z = (t,c)\in\mathcal{U}_x$, define $\Theta(t,c) = g(\varpi^{-1}(c))+ \int_{-\tau_x/5}^tu(y)h(\varpi^{-1}(c))\,dy$. In $\mathcal{U}_x$, $X\Theta = \hat{h}_\tau$. Moreover,
        \begin{equation*}
          \begin{split}
            \Theta\circ \varphi_{\tau_x/5}(z) &= \Theta(\tau_x/5,c) = g(\varpi^{-1}(c)) +  h(\varpi^{-1}(c)) =  g(\varpi^{-1}(c)) + g(\varpi^{-1}(\phi(c))) - g(\varpi^{-1}(c)) \\
            &= \Theta(-\tau_x/5,\varphi\circ\phi\circ\varpi^{-1}(c)) = \Theta\circ \varphi_{r(z)-\tau_x/5}(z),
          \end{split}
        \end{equation*}
        meaning that $\Theta$ can be taken to be leafwise constant between $\varphi_{\tau_x/5}(z)$ and $\varphi_{r(z)-\tau_x/5}(z)$, and so $X\Theta = \hat{h}_\tau = 0$ on $\Omega_x\setminus \mathcal{U}_x$, so $X \Theta = \hat{h}_\tau$ and thus $\mathcal{D}_i(h) = 0$ for all $i$.
        
        Now suppose $\mathcal{D}_i(h) = 0$ for all $i$, that is, $\hat{h}_\tau = X\Theta$ for some $\Theta\in \mathcal{S}^{2}_{\alpha-2-\varepsilon}\subset \mathcal{S}^{1}_{\alpha-2-\varepsilon}$. Then
        \begin{equation*}
          \begin{split}
        h(c) &= h(c)\int_{-\frac{\tau_x}{5}}^{\frac{\tau_x}{5}} u(t)\, dt = h(c)\int_{-\frac{\tau_x}{5}}^{r(\varpi(c))-\frac{\tau_x}{5}} u(t)\, dt = \int_{-\frac{\tau_x}{5}}^{r(\varpi(c))-\frac{\tau_x}{5}}\hat{h}_\tau \circ \varphi_s(\varpi(c))\, dt\\
        & = \Theta\circ \varphi_{r(\varpi(c))-\frac{\tau_x}{5}}(\varpi(c)) -  \Theta\circ \varphi_{-\frac{\tau_x}{5}}(\varpi(c))\\
        & = \Theta\circ \varphi_{-\frac{\tau_x}{5}}(\varpi(\phi(c))) - \Theta\circ \varphi_{-\frac{\tau_x}{5}}(\varpi(c))
          \end{split}
        \end{equation*}
        Define $g:X_B\rightarrow \mathbb{R}$ by $g(c) = \Theta\circ \varphi_{-\frac{\tau_x}{5}}(\varpi(c))$. Then the above calculation shows $h = g\circ \phi - g$. Moreover, by Lemma \ref{lem:pullback},
        $$\|g\|_{\alpha-2-\varepsilon-\varepsilon'}\leq C_{\alpha,\varepsilon,x} \|\Theta\|_{1,\alpha-2-\varepsilon}$$
        for all $ \varepsilon'$ small enough. Moreover, by part (iii) of Proposition \ref{eqn:spaceProps},
        $$C_{\alpha,\varepsilon,x} \|\Theta\|_{1,\alpha-2-\varepsilon}\leq C_{\alpha,\varepsilon,x}' \|X \Theta\|_{1,\alpha-1-\varepsilon} = C_{\alpha,\varepsilon,x}' \|\hat{h}_\tau\|_{1,\alpha-1-\varepsilon} .$$
        Finally, since $1+\alpha-1-\varepsilon < \alpha$, Lemma \ref{lem:bumpy} implies that
        $$C_{\alpha,\varepsilon,x}' \|\hat{h}_\tau\|_{1,\alpha-1-\varepsilon}\leq  C''_{\alpha,x,\varepsilon}\|h\|_\alpha.$$
        Stringing all the inequalities together:
        $$\|g\|_{\alpha-2-\varepsilon}\leq K_{x,\alpha,\varepsilon}\|h\|_\alpha.$$
      \end{proof}
      \section{Cyclic cocycles}
      \label{sec:traces}
        Let $\phi:X_B\rightarrow X_B$ be a uniquely ergodic Vershik transformation on the path space of an ordered Bratteli diagram $(X_B,\leq_r)$. Pick a metric on $X_B$ and set $H_\alpha(X_B)$ to be the space of H\"older functions, i.e., it is the space of continous functions $f:X_B\rightarrow \mathbb{R}$ such that
      $$|f|_\alpha:= \sup_{x\neq y}\frac{|f(x)-f(y)|}{d(x,y)^\alpha}<\infty.$$
      This space is endowed with the norm $\|f\|_\alpha:= \|f\|_\infty + |f|_\alpha$. There are two things to point out about the norm. First, define
      $$\triangle_\phi := \max\left\{ \max_{x\neq y} \frac{d(\phi(x),\phi(y))}{d(x,y)}, \max_{x\neq y} \frac{d(\phi^{-1}(x),\phi^{-1}(y))}{d(x,y)}   \right\}.$$
      Note that $\triangle_\phi = 1$ if and only if $\phi$ is an isometry. Otherwise $\triangle_\phi>1$. In either case, note that for any $k\in\mathbb{Z}$,
      \begin{equation}
        \label{eqn:HoldComp}
        |f\circ \phi^k|_\alpha \leq \triangle^{\alpha|k|}_\phi |f|_\alpha.
      \end{equation}
      Second, note that by writing $f(x)g(x) - f(y)g(y) = f(x)(g(x)-g(y))+ g(y)(f(x)-f(y))$ it immediately follows that $|fg|_\alpha\leq \|f\|_\infty |g|_\alpha + \|g\|_\infty |f|_\alpha$. Using this, along with $\|fg\|_\infty\leq \|f\|_\infty\|g\|_\infty$, leads to
      $$\|fg\|_\alpha\leq \|f\|_\alpha \|g\|_\alpha.$$
      Denote by $u$ be the unitary operator such that $ufu^* = \beta(f):= f\circ \phi^{-1}$. Define
      \begin{equation*}
        \begin{split}
          \ell^1_\alpha(X_B)&:=\left\{f:\mathbb{Z}\rightarrow H_\alpha(X_B)\hspace{.2in}|\hspace{.2in} \|f\|_{\ell^1_\alpha}:= \sum_{k\in\mathbb{Z}}\|f(k)\|_\alpha < \infty \right\}\\
          &\subset \ell^1(X_B) := \ell^1(\mathbb{Z},C(X_B))\subset \mathcal{A}_\phi:=C(X_B)\underset{\phi}{\times} \mathbb{Z}
        \end{split}
      \end{equation*}
      where I am somewhat abusing the notation, identifying $\ell^1_\alpha(X_B)$ with its integrated form
      $$\left\{\sum_{k\in\mathbb{Z}}f(k)u^k: \sum_{k\in\mathbb{Z}}\|f(k)\|_\alpha < \infty \right\}.$$
      Define
      $$W^\infty_\alpha(X_B) = \left\{
      \begin{array}{ll}   \displaystyle  f\in \ell^1_\alpha(X_B) : \sum_{k\in\mathbb{Z}}\|f(k)\|_\alpha \triangle^{(\alpha+s)|k|}_\phi<\infty\mbox{ for all }s>0  &\mbox{ if } \triangle_\phi>1, \\
     \displaystyle    f\in \ell^1_\alpha(X_B) : \sum_{k\in\mathbb{Z}}\|f(k)\|_\alpha (1+|k|)^s<\infty\mbox{ for all }s>0 &\mbox{ if } \triangle_\phi=1, \end{array} \right. $$
      and the families of seminorms
      $$\mu_q^\alpha(f):= \left\{
      \begin{array}{ll}   \displaystyle  \sup_{N\in\mathbb{N}} \left\{ \triangle_\phi^{(\alpha+q)N}\|(1-P_N)f\|_{\ell^1_\alpha(X_B)}\right\} &\mbox{ if } \triangle_\phi>1, \\
     \displaystyle    \sup_{N\in\mathbb{N}} \left\{ N^q\|(1-P_N)f\|_{\ell^1_\alpha(X_B)}\right\} &\mbox{ if } \triangle_\phi=1. \end{array} \right. $$
      where $P_N$ is the projection onto the coordinates $i$ such that $|i|\leq N$, and 
      $$\|f\|_{s,\alpha}:= \left\{
      \begin{array}{ll}   \displaystyle \sum_{k\in\mathbb{Z}}\|f(k)\|_\alpha\triangle_\phi^{(\alpha+s)|k|} &\mbox{ if } \triangle_\phi>1, \\
     \displaystyle  \sum_{k\in\mathbb{Z}}\|f(k)\|_\alpha(1+|k|)^s &\mbox{ if } \triangle_\phi=1. \end{array} \right. $$
      In either case, note that $W^\infty_\alpha(X_B)$ is the set of $f\in\ell^1_\alpha(X_B)$ such that $\|f\|_{s,\alpha}<\infty$ for all $s>0$ and that set $W^\infty_\alpha(X_B)$ is endowed with the product inherited from $\ell^1(X_B)$ defined by
      $$\left(\sum_{k\in\mathbb{Z}}a(k)u^k\right) \left(\sum_{k\in\mathbb{Z}}b(k)u^k\right) = \sum_{k\in\mathbb{Z}}a*b(k)u^k = \sum_{k\in\mathbb{Z}}\sum_{i\in\mathbb{Z}}a(i)b(k-i)\circ \phi^{-i}u^k.$$
      \begin{lemma}
        \label{lem:algebra}
        $f\in W^\infty_\alpha(X_B)$ if and only if $\mu_q^\alpha(f)<\infty$ for all $q\in\mathbb{N}$. Moreover $W^\infty_\alpha(X_B)$ is a unital involutive Fr\'echet $*$-algebra.
      \end{lemma}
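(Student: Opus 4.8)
The plan is to run the argument in three stages: first identify the topology of $W^\infty_\alpha(X_B)$ by showing that the two families of seminorms agree up to a shift of index, which gives both the stated characterization and the Fréchet structure; then verify completeness; and finally check that convolution, the adjoint and the unit stay inside this scale of seminorms with continuous dependence. Throughout I would treat the case $\triangle_\phi>1$ in detail, the case $\triangle_\phi=1$ being the same argument with the polynomial weights $(1+|k|)^s$ in place of $\triangle_\phi^{(\alpha+s)|k|}$.

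For the equivalence of seminorms, write $a_m:=\sum_{|k|=m}\|f(k)\|_\alpha$ and $T_N:=\sum_{m>N}a_m=\|(1-P_N)f\|_{\ell^1_\alpha(X_B)}$, so that the definition of $\mu_q^\alpha$ gives the tail bound $T_N\le \mu_q^\alpha(f)\,\triangle_\phi^{-(\alpha+q)N}$. Summing $a_m\le T_{m-1}$ against the weight $\triangle_\phi^{(\alpha+s)m}$ and choosing any $q>s$ then yields
$$\|f\|_{s,\alpha}\le \|f(0)\|_\alpha+\triangle_\phi^{\alpha+q}\mu_q^\alpha(f)\sum_{m\ge 1}\triangle_\phi^{(s-q)m}<\infty,$$
since $\triangle_\phi^{s-q}<1$. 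Conversely, for $|k|>N$ one has $\triangle_\phi^{(\alpha+q)N}\le\triangle_\phi^{(\alpha+q+1)|k|}$, hence $\triangle_\phi^{(\alpha+q)N}\|(1-P_N)f\|_{\ell^1_\alpha(X_B)}\le\|f\|_{q+1,\alpha}$ and so $\mu_q^\alpha(f)\le\|f\|_{q+1,\alpha}$. Thus $f\in W^\infty_\alpha(X_B)$, i.e. $\|f\|_{s,\alpha}<\infty$ for all $s$, precisely when $\mu_q^\alpha(f)<\infty$ for all $q$, and the two families generate the same locally convex topology. This topology is metrizable, being defined by countably many seminorms, and Hausdorff because $\|f\|_{0,\alpha}\ge\|f\|_{\ell^1_\alpha(X_B)}$ and the latter is already a norm. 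For completeness I would note that a Cauchy sequence in $W^\infty_\alpha(X_B)$ is Cauchy in the Banach space $\ell^1_\alpha(X_B)=\ell^1(\mathbb{Z},H_\alpha(X_B))$, hence converges there to some $f$; fixing $s$ and letting the second index tend to infinity in $\|f_n-f_m\|_{s,\alpha}<\varepsilon$ — legitimate since $\|\cdot\|_{s,\alpha}$ is a supremum of finite partial sums each of which converges coordinatewise in $H_\alpha(X_B)$, so Fatou applies — gives $\|f_n-f\|_{s,\alpha}\le\varepsilon$ for all large $n$ and every $s$, so $f\in W^\infty_\alpha(X_B)$ and $f_n\to f$ in every seminorm.

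For the algebra structure I would start from $\|gh\|_\alpha\le\|g\|_\alpha\|h\|_\alpha$ together with the distortion bound $\|g\circ\phi^{-i}\|_\alpha\le\triangle_\phi^{\alpha|i|}\|g\|_\alpha$ coming from (\ref{eqn:HoldComp}), which applied to the convolution formula gives $\|a*b(k)\|_\alpha\le\sum_i\triangle_\phi^{\alpha|i|}\|a(i)\|_\alpha\|b(k-i)\|_\alpha$; splitting the weight $\triangle_\phi^{(\alpha+s)|k|}$ by means of $|k|\le|i|+|k-i|$ then produces
$$\|a*b\|_{s,\alpha}\le \|a\|_{\alpha+s,\alpha}\,\|b\|_{s,\alpha},$$
so $W^\infty_\alpha(X_B)$ is closed under the product inherited from $\mathcal{A}_\phi$ and multiplication is jointly continuous. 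Similarly, from the description of the adjoint $a^*(m)=\overline{a(-m)\circ\phi^{-m}}$ the same distortion bound gives $\|a^*\|_{s,\alpha}\le\|a\|_{\alpha+s,\alpha}$, so $*$ preserves $W^\infty_\alpha(X_B)$ and is continuous; it is an involution as it is inherited from $\mathcal{A}_\phi$. Finally $1\in W^\infty_\alpha(X_B)$ since $\|1\|_{s,\alpha}=\|1\|_\alpha=1$ for every $s$. (When $\triangle_\phi=1$ the composition factors drop out and one gets the sharper $\|a*b\|_{s,\alpha}\le\|a\|_{s,\alpha}\|b\|_{s,\alpha}$ and $\|a^*\|_{s,\alpha}=\|a\|_{s,\alpha}$.)

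The one delicate point — the main obstacle, such as it is — is that both convolution and the adjoint shift the seminorm index $s\mapsto s+\alpha$, due precisely to the loss of Hölder regularity under composition with $\phi^{\pm i}$ recorded in (\ref{eqn:HoldComp}). This is harmless exactly because membership in $W^\infty_\alpha(X_B)$ means $\|\cdot\|_{s,\alpha}<\infty$ for \emph{all} $s>0$, and it is the reason the weights in the definition of $W^\infty_\alpha(X_B)$ carry the exponent $\alpha+s$ rather than $s$: a single fixed value of $s$ would not yield an algebra. Everything else is routine bookkeeping with geometric (or polynomial) weights.
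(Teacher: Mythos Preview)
Your proof is correct and follows essentially the same approach as the paper: the equivalence of the seminorm families via the tail bounds $a_m\le T_{m-1}$, and the estimates $\|ab\|_{s,\alpha}\le\|a\|_{s+\alpha,\alpha}\|b\|_{s,\alpha}$ and $\|a^*\|_{s,\alpha}\le\|a\|_{s+\alpha,\alpha}$ for the algebra structure, are exactly what the paper does. Your version is slightly more complete in that you spell out the completeness argument and the unit, which the paper leaves implicit; your bound $\mu_q^\alpha(f)\le\|f\|_{q+1,\alpha}$ is marginally weaker than the paper's $\mu_q^\alpha(f)\le\|f\|_{q,\alpha}$ (since already $\triangle_\phi^{(\alpha+q)N}\le\triangle_\phi^{(\alpha+q)|k|}$ for $|k|>N$), but this is immaterial.
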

      \begin{proof}
        The case when $\triangle_\phi=1$ is essentially already proved in \cite[\S 4]{FLLP:triples}. The case to be proved is when $\triangle_\phi>1$, so assume $\triangle_\phi>1$. Let $f\in W^\infty_\alpha(X_B)$. Then for $N\in\mathbb{N}$:
        $$\triangle_\phi^{(\alpha+q)N}\| (1-P_N)f \|_{\ell^1_\alpha(X_B)} = \triangle_\phi^{(\alpha+q)N}\sum_{|k|>N}\|f(k)\|_\alpha\leq \sum_{|k|>N}\|f(k)\|_\alpha \triangle_\phi^{(\alpha+q)|k|} \leq \|f\|_{q,\alpha}$$
        which shows that $\mu_q^\alpha(f)\leq \|f\|_{q,\alpha}<\infty$. Now suppose that $f$ satisfies $\mu_q^\alpha(f)<\infty$ for all $q>0$. Then
        \begin{equation}
          \begin{split}
            \|f\|_{q,\alpha} &= \sum_{k\in\mathbb{Z}} \|f(k)\|_\alpha\triangle_\phi^{(\alpha+q)|k|}\leq \sum_{k\in\mathbb{Z}}\sum_{\ell\geq |k|} \|f(\ell)\|_\alpha \triangle_\phi^{(\alpha+q)|k|} \\
            &= \sum_{k\in\mathbb{Z}}\triangle_\phi^{(\alpha+q)|k|} \|(1-P_{|k|-1})f\|_{\ell^1_\alpha} = \triangle_\phi^{(\alpha+q)}\sum_{k\in\mathbb{Z}}\triangle_\phi^{(\alpha+q)(|k|-1)} \|(1-P_{|k|-1})f\|_{\ell^1_\alpha} \\
            &= \triangle_\phi^{(\alpha+q)}\sum_{k\in\mathbb{Z}} \triangle_\phi^{-(\alpha+q)(|k|-1)} \triangle_\phi^{(\alpha+\alpha+2q)(|k|-1)} \|(1-P_{|k|-1})f\|_{\ell^1_\alpha} \\
            &= \triangle_\phi^{(\alpha+q)}\sum_{k\in\mathbb{Z}} \triangle_\phi^{-(\alpha+q)(|k|-1)} \mu_{2q+\alpha}^\alpha(f) \\
            &\leq \frac{2\triangle_\phi^{2(\alpha+q)}}{1-\triangle_\phi^{-(\alpha+q)}}\mu_{2q+\alpha}^\alpha(f) < \infty
          \end{split}
        \end{equation}
        for all $q>0$, and so the first part of the Lemma is proved.
        That $W^\infty_\alpha(X_B)$ is invariant under the $*$-involution follows from
        \begin{equation}
          \label{eqn:involution}
            \|a^*\|_{s,\alpha} = \sum_{k\in\mathbb{Z}}\|\overline{a(-k)\circ\phi^{-k}}\|_\alpha \triangle_\phi^{(\alpha+s)|k|}\leq  \sum_{k\in\mathbb{Z}}\|\overline{a(-k)}\|_\alpha \triangle_\phi^{(\alpha+s+\alpha)|k|} = \|a\|_{s+\alpha,\alpha}
        \end{equation}
        for all $s>0$, where (\ref{eqn:HoldComp}) was used, and so $a^*\in W^\infty_\alpha(X_B)$. Finally, for $a,b\in W^\infty_\alpha(X_B)$:
        \begin{equation}
          \label{eqn:product}
          \begin{split}
            \|ab\|_{s,\alpha} &= \sum_{k\in\mathbb{Z}} \|a*b(k)\|_\alpha \triangle_\phi^{(\alpha+s)|k|}  \leq \sum_{k\in\mathbb{Z}} \sum_{j\in\mathbb{Z}}\|a(j) b(k-j)\circ\varphi^{-j}\|_\alpha \triangle_\phi^{(\alpha+s)|k|} \\
            & \leq \sum_{k\in\mathbb{Z}} \sum_{j\in\mathbb{Z}}\|a(j) b(k-j)\circ\varphi^{-j}\|_\alpha \triangle_\phi^{(\alpha+s)|j|} \triangle_\phi^{(\alpha+s)|k-j|}\\
            & \leq \sum_{j\in\mathbb{Z}} \|a(j)\|_\alpha \triangle_\phi^{(\alpha+s+\alpha)|j|}  \sum_{k\in\mathbb{Z}}  \|b(k-j)\|_\alpha  \triangle_\phi^{(\alpha+s)|k-j|}   \\
            &\leq \|a\|_{s+\alpha,\alpha} \|b\|_{s,\alpha} < \infty
          \end{split}
        \end{equation}
        for all $s>0$, and so $ab\in W^\infty_\alpha(X_B)$.
      \end{proof}
      \begin{proposition}
        \label{prop:SI}
        $W^\infty_\alpha(X_B)$ is a dense $*$-subalgebra which is stable under holomorphic functional calculus.
      \end{proposition}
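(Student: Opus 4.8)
The plan is to establish density first, then deduce stability under holomorphic functional calculus from spectral invariance, and finally prove spectral invariance by a Neumann series estimate that plays the $C^*$-norm contraction against the submultiplicative-with-shift bounds of Lemma \ref{lem:algebra}.

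\textit{Density.} Since $X_B$ is a Cantor set, every locally constant function lies in $H_\alpha(X_B)$ for every $\alpha>0$, and locally constant functions are uniformly dense in $C(X_B)$; hence the finitely supported Laurent polynomials $\sum_{|k|\le N}f(k)u^k$ with $f(k)$ locally constant already belong to $W^\infty_\alpha(X_B)$, and these are dense in $\ell^1(X_B)$, which is dense in $\mathcal{A}_\phi$. So $W^\infty_\alpha(X_B)$ is dense in $\mathcal{A}_\phi$. By Lemma \ref{lem:algebra} the inclusion $W^\infty_\alpha(X_B)\hookrightarrow\mathcal{A}_\phi$ is a continuous unital embedding of a Fréchet $*$-algebra with dense range. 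By the standard fact that a spectrally invariant, densely and continuously embedded Fréchet $*$-subalgebra of a $C^*$-algebra is automatically closed under holomorphic functional calculus — and induces an isomorphism on $K$-theory, which is what is needed for Theorem \ref{thm:main2} — it suffices to prove that $W^\infty_\alpha(X_B)$ is \textit{spectrally invariant}: if $a\in W^\infty_\alpha(X_B)$ is invertible in $\mathcal{A}_\phi$, then $a^{-1}\in W^\infty_\alpha(X_B)$.

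\textit{Reduction to a Neumann series.} Using that $W^\infty_\alpha$ is closed under the $*$-operation and under multiplication (Lemma \ref{lem:algebra}), I would reduce this to the following: if $c=c^*\in W^\infty_\alpha(X_B)$ has operator norm $\rho:=\|c\|_{\mathcal{A}_\phi}<1$, then $\sum_{n\ge 0}c^n$ converges in the Fréchet topology of $W^\infty_\alpha(X_B)$. Indeed, if $a$ is invertible then $a^*a$ is positive and invertible, so $\|1-ta^*a\|_{\mathcal{A}_\phi}<1$ for small $t>0$; putting $c=1-ta^*a\in W^\infty_\alpha$, convergence of $\sum c^n$ in $W^\infty_\alpha$ gives $(a^*a)^{-1}=t\sum_n c^n\in W^\infty_\alpha$, whence $a^{-1}=(a^*a)^{-1}a^*\in W^\infty_\alpha$. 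To estimate $\|c^n\|_{s,\alpha}$ for fixed $s>0$, there are two competing inputs: the canonical conditional expectation $E:\mathcal{A}_\phi\to C(X_B)$ is contractive, so $\|c^n(k)\|_\infty=\|E(c^n u^{-k})\|_\infty\le\rho^n$ for every $k$; while iterating the submultiplicative-with-shift bound from Lemma \ref{lem:algebra} (equivalently, from (\ref{eqn:product})) gives $\|c^n\|_{s',\alpha}\le\|c\|_{s',\alpha}\,\|c\|_{s'+\alpha,\alpha}^{\,n-1}$ for every $s'$, with all these norms finite because $c\in W^\infty_\alpha$. The intended mechanism is to split $c^n=P_{N_n}c^n+(1-P_{N_n})c^n$ with a cutoff $N_n$ growing linearly in $n$: on the high-frequency block one passes to a higher weight $s_1>s$, where the weight ratio $\triangle_\phi^{(s-s_1)N_n}$ decays exponentially in $n$ and beats the growth $\|c\|_{s_1+\alpha,\alpha}^{\,n}$; on the low-frequency block $|k|\le N_n$ one exploits the contraction $\|c^m(k)\|_\infty\le\rho^m$ together with the refined product estimate $\|fg\|_\alpha\le\|f\|_\infty\|g\|_\alpha+\|g\|_\infty\|f\|_\alpha$ and $\|f\circ\phi^{-j}\|_\alpha\le\triangle_\phi^{\alpha|j|}\|f\|_\alpha$, so that each Fourier coefficient of $c^n=c^{\lceil n/2\rceil}c^{\lfloor n/2\rfloor}$ acquires a genuine factor $\rho^{\lceil n/2\rceil}$ or $\rho^{\lfloor n/2\rfloor}$; iterating this dyadically and controlling the accompanying weighted $\ell^1$-sums of operator-coefficients by balancing $\rho^n$ against the rapid decay of the coefficients of $c$ should then yield geometric decay of $\|c^n\|_{s,\alpha}$ in $n$. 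Summing over $n$ and noting $s$ was arbitrary completes the argument.

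\textit{Main obstacle.} The delicate point is precisely making the low-frequency estimate close: unlike the sup-norm part, the Hölder seminorm $|c^n(k)|_\alpha$ of a Fourier coefficient is \textit{not} controlled by the $C^*$-norm, only by the shifted-weight norms, whose growth has base $\|c\|_{s'+\alpha,\alpha}$ which need not be less than $1$ — and this shift is exactly the price of $\triangle_\phi>1$, i.e.\ of $\phi$ failing to be an isometry. One must therefore choose the cutoff $N_n$, the auxiliary weight $s_1$, and the splitting of powers in careful proportion to $\rho$, to $\triangle_\phi$, and to the higher-weight norms of $c$, using crucially that membership in $W^\infty_\alpha(X_B)$ forces the coefficients $\|c(k)\|_\alpha$ to decay faster than every exponential rate $\triangle_\phi^{-q|k|}$, so that these higher-weight norms remain finite and the balance can be arranged for every target weight $s$. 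When $\triangle_\phi=1$ all shifts vanish, $\ell^1_\alpha(X_B)$ is already a Banach algebra, and the argument collapses to the classical case treated in \cite[\S 4]{FLLP:triples}.
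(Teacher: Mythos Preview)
Your density argument and reduction to spectral invariance are fine, but the Neumann-series estimate you propose is genuinely different from the paper's and leaves the decisive step unproved. The paper never uses the $C^*$-contraction $\|c\|_{\mathcal A_\phi}<1$. Instead it first shows that $h^{-1}\in W^\infty_\alpha$ whenever $\|1-h\|_{\alpha,\alpha}<\tfrac12$ --- a Fr\'echet-seminorm smallness condition --- using Jolissaint's telescoping identity $f^n=\sum_{k=0}^{n}(P_{N^2}f)^k(1-P_{N^2})f\cdot f^{n-k-1}$ together with the shifted submultiplicativity, which already gives $\|f^k\|_{0,\alpha}\le\|f\|_{\alpha,\alpha}^k\le 2^{-k}$ outright; the cutoff is $N^2$ versus $N$, and one estimates $\mu^\alpha_q\bigl(\sum_n f^n\bigr)$ directly. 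Only afterwards does the paper bootstrap to an arbitrary invertible $h$ by approximating $h^{-1}$ by some $h'\in W^\infty_\alpha$ and applying the first step to $hh'$.

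Your route --- assuming only $\rho=\|c\|_{\mathcal A_\phi}<1$ and trying to sum $\sum c^n$ in each $\|\cdot\|_{s,\alpha}$ --- runs into exactly the obstacle you name but do not resolve. The dyadic split $c^n=c^{\lceil n/2\rceil}c^{\lfloor n/2\rfloor}$ with the refined product inequality produces a recursion in which the H\"older half carries a factor like $\sum_j\triangle_\phi^{(2\alpha+s)|j|}\|c^{n/2}(j)\|_\infty$; to sum this you must cut off in $j$, and then the tail is controlled only by $\|c^{n/2}\|_{s',\alpha}\lesssim\|c\|_{s'+\alpha,\alpha}^{n/2}$, a base that can be arbitrarily large relative to $\rho^{-1}$. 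Simultaneously your high-frequency bound wants $N_n$ large (linear with big slope, so that $\triangle_\phi^{(s-s_1)N_n}$ beats $\|c\|_{s_1+\alpha,\alpha}^n$), while the low-frequency sup-norm bound $\rho^n\triangle_\phi^{(\alpha+s)N_n}$ wants $N_n$ small; no a priori relation between $\rho$ and the higher seminorms of $c$ lets you reconcile these. The paper's device avoids the tension entirely by taking the smallness hypothesis in the Fr\'echet seminorm from the start, so that geometric decay in $\ell^1_\alpha$ is free and only the tail-projection growth needs to be tamed.
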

      
      \begin{proof}[Proof of Proposition \ref{prop:SI}]
        The proof will follow a an argument by Jolissaint \cite{J:rapid} which was slightly expanded in \cite[Proposition 4.8]{FLLP:triples}. That argument applies to the case $\triangle_\phi=1$, so here the case $\triangle_\phi>1$ needs to be treated. First, note that for $a,b\in W^{\infty}_\alpha(X_B)$ a calculation similar to (\ref{eqn:product}) gives
        \begin{equation}
          \label{eqn:prod2}
          \|ab\|_{\ell^1_\alpha(X_B)} \leq \|a\|_{0,\alpha}\|b\|_{\ell^1_\alpha(X_B)}.
        \end{equation}
        Now consider $f = 1-h\in W^\infty_\alpha(X_B)$ such that $\|f\|_{\alpha,\alpha}<\frac{1}{2}$. Since $\|f\|_{\ell^1}\leq \|f\|_{\ell^1_\alpha} \leq \|f\|_{\alpha,\alpha}$, the element $h\in W^\infty_\alpha$ is invertible in $\ell^1(X_B)$ with inverse $h^{-1} = \sum_{n\geq 0} f^n$. The first step towards the proof of the proposition is to show that $h^{-1}\in W^\infty_\alpha$.

        So let $P = P_{N^2}:\ell^1(X_B)\rightarrow C_c(\mathbb{Z},C(X_B))$ be the projection onto the coordinates $i$ such that $|i|\leq N^2$ for some $N$. The first step is to note that $f^n$ can be rewritten as
        \begin{equation}
          \label{eqn:jollisaint}
          \begin{split}
            f^n &= (1-P)f^n+Pf^n = (1-P)f^n+Pf\cdot f^{n-1} \\
            &=  (1-P)f^n+Pf ( (1-P)f^{n-1}+Pf^{n-1}) \\
            &= (1-P)f^n+Pf (1-P)f^{n-1}+(Pf)^2\cdot f^{n-2} \\
            &= (1-P)f^n+Pf (1-P)f^{n-1}+(Pf)^2( (1-P)f^{n-2}+Pf^{n-2}) \\
            &= (1-P)f^n+Pf (1-P)f^{n-1}+(Pf)^2 (1-P)f^{n-2}+Pf^3 f^{n-3} \\
            &\;\; \vdots \\
            &= (1-P)f^n+Pf (1-P)f^{n-1}+ \cdots + (Pf)^{n-1}(1-P)f + (Pf)^n,
          \end{split}
        \end{equation}
        in other words,
        $$f^n = \sum_{k=0}^n (P_{N^2}f)^k(1-P_{N^2})f\cdot f^{n-k-1}.$$
        Thus, for $n\leq N^2$, using the the triangle inequality and (\ref{eqn:prod2}), $(1-P_N)f^n$ can be bounded as
        \begin{equation*}
          \begin{split}
            \|(1-P_N)f^n\|_{\ell^1_\alpha} &\leq\sum_{k=0}^n \| (P_{N^2}f)^k(1-P_{N^2})f\cdot f^{n-k-1}\|_{\ell^1_\alpha}\\
            &\leq\sum_{k=0}^n \| (P_{N^2}f)^k\|_{0,\alpha}\,\|(1-P_{N^2})f\|_{0,\alpha}\, \| f^{n-k-1}\|_{\ell^1_\alpha} \\
            &\leq \sum_{k=0}^n \|P_{N^2}f\|^{k}_{\alpha,\alpha}\cdot \|(1-P_{N^2})f\|_{0,\alpha}\cdot \|f\|^{n-k+1}_{0,\alpha}\\
            &\leq \frac{n}{2^n}\|(1-P_{N^2})f\|_{0,\alpha}.
          \end{split}
        \end{equation*}
        Thus
        \begin{equation*}
          \begin{split}
            \left\|(1-P_N)\sum_{n>0}f^n\right\|_{\ell^1_\alpha} &\leq \sum_{n=1}^{N^2} \left\|(1-P_N)f^n  \right\|_{\ell^1_\alpha} +  \sum_{n>N^2} \left\|(1-P_N)f^n  \right\|_{\ell^1_\alpha} \\
            &\leq \sum_{n=1}^{N^2} n2^{-n} \left\|(1-P_{N^2})f  \right\|_{0,\alpha}  +  \sum_{n>N^2} \left\|(1-P_N)f^n  \right\|_{\ell^1_\alpha} \\
            &\leq N^4\|(1-P_{N^2})f\|_{0,\alpha} +  \frac{4\|f\|_{\ell^1_\alpha}}{2^{N^2}}.
          \end{split}
        \end{equation*}
        Thus, multiplying by $\triangle_\phi^{(\alpha+q)N}$:
        \begin{equation*}
          \begin{split}
            \triangle_\phi^{(\alpha+q)N} \left\|(1-P_N)\sum_{n> 0}f^n\right\|_{\ell^1_\alpha}&\leq \triangle_\phi^{(\alpha+q)N}  N^4\|(1-P_{N^2})f\|_{0,\alpha} +  \triangle_\phi^{(\alpha+q)N}\frac{4\|f\|_{\ell^1_\alpha}}{2^{N^2}} \\
            &\leq C_q \triangle_\phi^{(\alpha+2q)N}  \|(1-P_{N})f\|_{0,\alpha} +  \triangle_\phi^{(\alpha+q)N}\frac{4\|f\|_{\ell^1_\alpha}}{2^{N^2}} \\
            &\leq C_q \mu^\alpha_{\alpha+2q}(f) + \sup_N \left\{ \triangle_\phi^{(\alpha+q)N}\frac{4\|f\|_{\ell^1_\alpha}}{2^{N^2}} \right\} < \infty,
          \end{split}
        \end{equation*}
        where $C_q:= \sup_{N>0} \triangle^{-qN}N^4$, since the last term is bounded for any $q$. This implies that
        $$\mu^\alpha_q\left(\sum_{n> 0}f^n \right)<\infty$$
        for all $q>0$, and so $\sum_{n> 0}f^n \in W^\infty_\alpha(X_B)$ by Lemma \ref{lem:algebra}. As such, $\sum_{n\geq 0}f^n$ is also in $W^\infty_\alpha$ and thus $h = 1-f$ is invertible in $W^\infty_\alpha$ whenever $\|1-h\|_{\alpha,\alpha}< \frac{1}{2}$.

        Now suppose that $h\in W^\infty_\alpha(X_B)$ is invertible in $\ell^1(X_B)$. The goal is to show that $h^{-1}$ is in fact an element of $W^\infty_\alpha(X_B)$. Following the end of the proof of \cite[Proposition 4.8]{FLLP:triples}, note that $1_{\ell^1(X_B)} = 1_{W^\infty_\alpha(X_B)} = h\cdot h^{-1}$. Since $W^\infty_\alpha(X_B)$ is dense in $\ell^1(X_B)$ taking $h'\in W^\infty_\alpha(X_B)$ with $\|h^{-1}-h'\|_{\alpha,\alpha}<\frac{1}{2\|h\|_{2\alpha,\alpha}}$, it follows that
        $$\|1-h\cdot h'\|_{\alpha,\alpha} = \|h\cdot h^{-1} - h\cdot h'\|_{\alpha,\alpha}\leq \|h\|_{2\alpha,\alpha}\cdot \|h^{-1}-h'\|_{\alpha,\alpha}<\frac{1}{2}.$$
        Since both $h\cdot h'$ and $1-h\cdot h'$ are both in $W^\infty_\alpha(X_B)$, by the arguments above, $(h\cdot h')^{-1}\in W^\infty_\alpha(X_B)$. Thus, $h^{-1} = h'(h\cdot h')^{-1}$ is an element of $W^\infty_\alpha(X_B)$. It follows that $W^\infty_\alpha(X_B)$ is a \textbf{spectral invariant} subalgebra of $\mathcal{A}_\phi$, that is, it is stable under holomorphic functional calculus (see \cite[\S 4]{FLLP:triples}).
      \end{proof}
      \begin{proof}[Proof of Theorem \ref{thm:main2}]
        Theorem \ref{thm:main} gives for $\mu$-almost every $x = (B,\leq_r)$ a collection of $d_\mu$ $\phi$-invariant distributions $\mathcal{D}_i \in H_\alpha(X_B)'$. Each distribution defines a trace on $W^\infty_\alpha(X_B)$ by
      $$\tau_i: \sum_{k\in\mathbb{Z}}a(k)u^k\mapsto \mathcal{D}_i(a(0)).$$
        By Proposition \ref{prop:SI}, $W^\infty_\alpha(X_B)$ is stable under holomorphic functional calculus, and thus its inclusion into $\mathcal{A}_\phi$ induces an isomorphism on $K_0$ \cite[Appendix 3.C]{Connes:book}. As such, each invariant distribution $\mathcal{D}_i$ defines a trace $\tau_i:K_0(\mathcal{A}_\phi)\rightarrow \mathbb{R}$.
      \end{proof}
      \bibliographystyle{amsalpha}
      \bibliography{biblio}      
\end{document}